%% file: spi.tex
\newif\ifms
\msfalse
\ifms
    \documentclass[11pt]{article} 
    \usepackage[margin=1in]{geometry}
    \usepackage{setspace}
    \usepackage{lineno}           
    \linenumbers                  
\else
    \documentclass[11pt, letterpaper]{article}
    \usepackage[margin=1in]{geometry}
\fi

  \usepackage[centertags]{amsmath}
  \usepackage{amsfonts}
  \usepackage{amsthm}
  \usepackage{amssymb}
  \usepackage{bm}
  \usepackage{latexsym}
  \usepackage{graphics}
  \usepackage{graphicx}
  \usepackage{subcaption} 
  \usepackage{tabularx}
  \usepackage{caption}
  \usepackage{placeins}

  \usepackage[hidelinks]{hyperref}
  \ifms
  \else
  \hypersetup{
    pdftitle={The Shadow Price of Intelligence},
    pdfauthor={Elioth Sanabria},
    pdfsubject={Research article},
    pdfkeywords={LLM inference; capacity allocation; service degradation; queueing; duality},
    pdfstartview={FitH}
}
\fi
\usepackage{algorithm} 
\usepackage{algorithmicx}
\usepackage{algpseudocode}
\usepackage[style=authoryear, maxcitenames=2]{biblatex}
\renewbibmacro{in:}{%
  \ifentrytype{article}{}{\printtext{\bibstring{in}\intitlepunct}}%
}
\renewbibmacro*{volume+number+eid}{%
  \printfield{volume}%
  \setunit*{\addnbspace}%
  \printfield{number}%
  \setunit{\addcomma\space}%
  \printfield{eid}}
\DeclareFieldFormat[article]{number}{\mkbibparens{#1}}
\usepackage{tabularx}
\usepackage{array} 
\usepackage{booktabs}
\usepackage{multirow}
\newcolumntype{L}[1]{>{\raggedright\arraybackslash}p{#1}}
\newcolumntype{R}[1]{>{\raggedleft\arraybackslash}p{#1}}
\newcolumntype{C}[1]{>{\centering\arraybackslash}p{#1}}

\usepackage{tikz}
\usetikzlibrary{graphs, positioning, calc, arrows.meta, 3d, fit, backgrounds, matrix, decorations.pathreplacing}

\usepackage{pgfplots}
\pgfplotsset{compat=1.17}
\usepgfplotslibrary{fillbetween, groupplots}
\usepackage{pgfplotstable}

\usepackage{forest}

\usepackage{fancyhdr}
\ifms
\else
    
\fi

\fancypagestyle{plain}{
    \fancyhf{}
    \fancyfoot[C]{\small\thepage}
    
}

\newcommand{\pr}{\mbox{\sf P}}
\newcommand{\ex}{{\bf\sf E}}               
\newcommand{\var}{\mbox{\sf Var}}

\def\ssk{\medskip\noindent}

\newcommand{\bN}{{\bf N}}

\newcommand{\cala}{{\cal A}}

\newcommand{\calh}{{\cal H}}

\newcommand{\calx}{{\cal X}}

\newcommand{\tS}{{\tilde S}}
\newcommand{\wh}{\widehat}

\def\ind{{\bf 1}}


\theoremstyle{definition} 
\newtheorem{thm}{Theorem}
\newtheorem{lem}[thm]{Lemma}
\newtheorem{pro}[thm]{Proposition}
\newtheorem{cor}[thm]{Corollary}

\newtheorem{rem}{Remark}

\ifms
    \newtheorem{exm}{Example} 
\else
    \newtheorem{exm}{Example}[section] 
\fi

\title{\vspace{-1.5cm} The Shadow Price of Intelligence\\[0.25cm]{\large Quality Degradation in LLM Inference as a Supply Chain Problem}}

\ifms
    \author{Anonymous Authors}
    \date{\small \today}
\else
    \author{
        Elioth Sanabria \\[0.2cm]
        \small Department of Decision and Technology Analytics \\ 
        \small Lehigh University - College of Business\\
        \small \texttt{els626@lehigh.edu}
    }
    \date{\small \today}
\fi

\begin{document}

\maketitle
\begin{abstract}
\noindent Large language model providers are compute constrained, and their universal response to congestion is to degrade service: route queries to smaller models, cut reasoning effort, truncate context. The industry's accounting says this saves money. We show the accounting is wrong, because it prices a query when the customer buys an answer. A degraded answer fails with some probability, and a failed answer either returns as a retry, inflating arrivals when the system is most loaded, or departs as churn, destroying lifetime value on a ledger no cost dashboard displays. We model inference allocation with three classical primitives: a newsvendor whose stockout cost is churned lifetime value, a geometric retry multiplier in which the recycled product is dissatisfaction, and a two-regime transient queue whose arrival rate is made endogenous by retries. Statically, there is a nonempty, measurable regime in which a cheaper model saves energy per satisfied answer while consuming strictly more capacity per satisfied answer, so the discount inverts exactly when capacity binds. Dynamically, a reactive throttle fired during a surge can cross an ignition threshold beyond which it manufactures more traffic than it sheds, and a release rule set below the degraded equilibrium converts a transient surge into a permanent degraded regime. With heterogeneous customers, throttling is a transportation problem in retry-inflated load whose optimal policy rations intelligence by critical ratio, class by class, and whose dual, the shadow price of intelligence, prices a marginal query by class and by hour; closed-form trajectories make it computable in milliseconds. Stochastic analysis sharpens rather than erodes the thesis: the ignition boundary acquires a predicted width, and noise punishes the reactive policy that parks the system against it. Under congestion, throttling is not a cost lever but a demand lever.
\vspace{0.4cm} \\
\textbf{Keywords:} LLM inference; capacity allocation; service degradation; queueing; duality.
\end{abstract}

\vspace{0.5cm}
\setlength{\parskip}{0.4em}
\section{Introduction}\label{sec:intro}

Allocating inflexible resources against random demand is one of the founding problems of operations management, and its newest instance is also its most expensive: deciding, query by query, who receives how much machine intelligence. The providers themselves describe the stakes without euphemism. A 2026 Anthropic careers posting reads:
\begin{quote}
``Anthropic is compute-constrained, and how we allocate that compute is one of the highest-leverage decisions we make as a company. Today, allocation choices are only loosely tied to the user outcomes we ultimately care about: retention, lifetime value, and the experience of people relying on Claude.''
\end{quote}
The distance named in that paragraph, between the allocation decision and the outcomes it exists to serve, is not a data problem and not an engineering problem. It is a modeling problem, and it is precisely the kind of problem that can be solved applying a first principles approach.

When a language model provider is congested, it does not turn customers away. It degrades them: it routes queries to smaller or quantized models, cuts the compute budget for reasoning, truncates the context the model is allowed to read. Degradation is the industry's universal congestion lever because its first-order effect is irresistible. A degraded query consumes less energy, less memory, and less server time, and every dashboard in the industry duly reports that degrading saves money. This paper is about the sign of that claim being wrong.

The error is an accounting error, and it fits in one sentence: \emph{the customer buys an answer, but the dashboard prices a query}. A degraded answer is unsatisfactory with some probability, and an unsatisfied customer does one of two things. They re-ask, in which case the query returns to the arrival stream and inflates demand at exactly the moment the system is least able to absorb it. Or they abandon, in which case a lifetime-value asset is destroyed on a ledger no cost chart displays. The true impulse response of a throttle is therefore cost up, then down, plus a permanent loss that never appears on the cost curve. Since compute is genuinely scarce, someone must receive less intelligence; the question is never whether to degrade. The question is how much, when, and, once customers differ, who. The price of these decisions is a dual variable, the shadow price of intelligence, and computing it is the destination of this paper.

We reach that destination with deliberately classical equipment. The model has three primitives. The first is a capacity choice under random demand, a newsvendor in which the cost of a stockout is not a lost sale but the expected lifetime value of a customer who never returns. The second is a retry loop: because each unsatisfactory answer re-enters the queue independently, the number of attempts behind one satisfied answer is geometric, and its mean is a multiplier of exactly the kind that arises in input-output economics when a producer consumes a share of its own output, except that here the recycled product is dissatisfaction. The third is a transient fluid model of a finite-server queue, saturated and linear above capacity, exponential and self-correcting below it, with one modification that changes everything: the arrival rate is endogenous, because completions at a degraded tier feed retries back into the stream. These are well-known ingredients \parencite{Leontief1966, HalfinWhitt1981, Whitt2002}. What is not standard is the loop that closes over them: the failure probability driving retries and churn is not an attribute of the customer but the direct consequence of a quality decision the provider controls. The feedback is an instrument, and this paper is about how to play it.

Four results follow, each a corollary of the accounting identity above. Statically, comparing model tiers per query and per satisfied answer yields two different break-even frontiers, an energy frontier and a capacity frontier, and on the nonempty interval between them a weak tier saves money per satisfied answer while consuming strictly more server time per satisfied answer; every quantity in the frontier conditions is measurable, the service times and power draws from public benchmarks and the multipliers from logged traffic, so the trap is a falsifiable statement about real systems, and inside it the wrong decision registers as a saving on every dashboard. Dynamically, switching saturated traffic to a degraded tier drains the queue if and only if fresh demand is below the tier's effective, retry-deflated throughput; above that threshold the feedback sustains itself, the standard reactive rule raises raw capacity while worsening the congestion it was deployed to relieve, and a release threshold placed below the degraded equilibrium never releases, converting a transient surge into a permanent degraded regime. With heterogeneous customers, throttling becomes a transportation problem whose capacity constraint must be written in retry-inflated load; the optimal policy degrades classes in increasing order of marginal damage per unit of capacity relief and strictly dominates the uniform throttling that every production load balancer implements. Finally, the dual of that problem prices a marginal query by class and by hour, collapsing off peak to the electricity of a strong-tier answer and carrying, at the crunch, a scarcity rent that differs sharply across classes; the spread of that dual over a day is the ``loosely tied'' of the posting above, made into a number. A companion analysis prices what the fluid model discards, variance, and shows the noise strengthens the thesis: the ignition threshold becomes a boundary with a predicted width, and the prescription is the oldest formula in operations, a square-root buffer, applied to a boundary instead of a quantity. Section~\ref{sec:poc} demonstrates the entire pipeline, statics, dynamics, matching, duality, and policy distillation, on five calibrated instances, as a proof of concept rather than an empirical claim.

\subsection{Related Literature}\label{sec:lit}

Four literatures meet in this problem, and the point of contact is the same in each: a quantity that the literature treats as a primitive of the environment is, for an LLM provider, a decision.

\ssk\emph{Retrials, abandonment, and rational queueing.} In retrial queues, blocked customers enter an orbit and return \parencite{FalinTempleton1997, Artalejo2008}; in the abandonment tradition descending from Erlang-A, waiting customers renege, and staffing rules are derived to control the fraction lost \parencite{GarnettMandelbaumReiman2002, ZeltynMandelbaum2005}. Our customers do both, but for a different reason and at a different point in the process: they defect \emph{after} service, in response to its quality, and the quality is the firm's control. This inverts the direction of the classical analysis. Where Erlang-A asks how many servers hold abandonment below a target, we ask which quality menu holds the retry feedback below ignition. The strategic queueing literature beginning with \textcite{Naor1969} and synthesized by \textcite{HassinHaviv2003} endogenizes customer behavior through equilibrium reasoning about delay; our customers respond to realized quality rather than anticipated delay, which removes the fixed-point subtlety of equilibrium arrival rates but introduces a sharper one, a completion-driven feedback whose gain the firm sets tier by tier. Two closer antecedents deserve explicit mention. Queues with Bernoulli feedback \parencite{Takacs1963} recycle a completed job with a fixed probability, and \textcite{deVericourtZhou2005} route calls under a resolution probability, unresolved customers calling back and re-entering the system, failure-driven re-entry after service with routing as the decision. What separates our setting is that the re-entry probability is neither an environment primitive nor a fixed server attribute: it is the quantity the provider chooses, tier by tier and class by class, jointly priced against a churn ledger denominated in lifetime value, and it moves the arrival rate within the congestion event itself. The trap region, the ignition threshold, and the shadow-price duality all live in that joint choice, and none arises when the feedback gain is exogenous.

\ssk\emph{Quality-speed trade-offs and demand-service interactions.} A line of work models servers who choose a speed-quality point, with congestion penalizing slowness and errors penalizing haste \parencite{AnandPacVeeraraghavan2011, HoppIravaniYuen2007, AlizamirVanBerghVaraktarakis2013, AtaShneorson2006}, and a complementary line models service quality feeding back into future demand through loyalty and word of mouth \parencite{Hall2000, Gans2002, AflakiPopescu2014}. We combine the two channels and add the one the LLM setting makes unavoidable: failed service returns to the queue \emph{immediately}, within the congestion event itself, so quality degradation moves the arrival rate on the same time scale as the backlog it was meant to relieve. The resulting effective-throughput inversion (Proposition~\ref{pro:trap}) and ignition threshold (Proposition~\ref{pro:spiral}) have, to our knowledge, no analogue in either line once the feedback gain becomes a decision variable.

\ssk\emph{Amplification in supply networks.} The bullwhip effect shows how individually rational hedging amplifies demand variability as it propagates through a supply network \parencite{LeePadmanabhanWhang1997, ChenDreznerRyanSimchiLevi2000}, and its modern reading is structural: the amplification is a property of the network's feedback topology, not of the managers operating it \parencite{sanabria2026sca}. Our central result is of the same species. The retry spiral is individually rational cost-cutting that amplifies the demand it faces, it survives perfect information and instantaneous control, and its remedy, like the bullwhip's, is architectural rather than exhortative.

\ssk\emph{Server allocation, scheduling, and LLM serving.} The autoscaling literature asks how many servers to run against time-varying load, trading energy against latency \parencite{GandhiHarcholBalter2012, LinWiermanAndrewThereska2013}; square-root staffing and fluid approximations for transient many-server systems supply its analytical backbone \parencite{HalfinWhitt1981, MandelbaumMassey1995, Whitt2002}. The systems literature on LLM inference optimizes the serving layer itself: batching, paged attention, memory management \parencite{KwonEtAl2023, YuEtAl2022}. We hold the fleet and the serving stack fixed and ask the question orthogonal to both: not how many servers or how to batch, but \emph{what quality to serve, to whom, and at what implicit price}. The index policy of Section~\ref{sec:who} is a relative of the $c\mu$ rule and its generalizations \parencite{VanMieghem1995}, with the novelty that the capacity purchased by degradation is itself deflated by the retry multiplier of the class being degraded. For the interpretable-policy question of Section~\ref{sec:tree} we draw on decision-tree representations of Markov decision policies \parencite{SanabriaYaoLam2021}, and for the distribution-dependent dynamics that delayed retries induce, a natural extension of this work, on the nonlinear-Markov formulation of \textcite{DiekerHackmanWangYan2026} (QPLEX and QDP).

Section~\ref{sec:model} builds the model. Section~\ref{sec:analysis} contains the analysis: the statics, the dynamics, the matching problem, and the shadow price. Section~\ref{sec:practice} confronts the four questions a deployment must answer: how the program is solved and at what computational price, what variance does to the fluid conclusions, how to make the policy legible, and how to identify the one primitive that is not directly observable. Section~\ref{sec:poc} is the proof-of-concept computation, and Section~\ref{sec:conclusion} concludes. 

\section{The Model}\label{sec:model}

In this section we present a stylized but rich enough modeling paradigm to capture the nuance of the problem. We start by describing the primitives that allow us to answer how much to degrade and when. We add customer heterogeneity in Section \ref{sec:who}. Table~\ref{tab:notation} collects the notation once; each symbol is introduced in context below, and the derived quantities of the last block are the ones the analysis runs on.

\begin{table}[t]
\centering
\caption{Notation. The symbols of the first three blocks are primitives, measurable from benchmarks, serving traces, and logged traffic; the derived quantities of the last block carry the retry feedback and are the objects every result compares.}
\label{tab:notation}
\small
\begin{tabular}{@{}llL{8.2cm}@{}}
\toprule
& symbol & meaning \\
\midrule
\multirow{3}{*}{\rotatebox{90}{fleet}}
& $k$, $m$, $mk$ & servers; concurrent slots per server; total capacity in jobs \\
& $N_t$ & jobs resident in the system at time $t$ \\
& $\gamma$, $w_0$, $c_m$, $\kappa$, $C_{\mathrm{SLA}}$, $\beta$ & electricity price; idle draw per server; memory cost scale and exponent; service-level penalty scale (dollars per hour) and elasticity \\
\midrule
\multirow{3}{*}{\rotatebox{90}{tiers}}
& $j\in\{0,\dots,J\}$ & quality tier, $0$ the strongest; $\varphi$ a routing mix across tiers \\
& $\Delta w_j$, $\ex[S_j]$, $\mu_j$ & power draw per active slot; mean service time; service rate $1/\ex[S_j]$ \\
& $d_j$ & dissatisfaction probability: the answer fails its user, increasing in $j$ \\
\midrule
\multirow{3}{*}{\rotatebox{90}{customers}}
& $\rho$ & probability an unsatisfied user re-asks (retry); $1-\rho$ abandons \\
& $p_r$, $\ell$ & churn probability upon abandonment; expected lifetime margin, so each abandonment books the expected LTV loss $p_r\ell$ \\
& $r_t$ & fresh-demand arrival rate, the forecast $\ex[D_t\mid\calh_t]$ \\
\midrule
\multirow{5}{*}{\rotatebox{90}{derived}}
& $M_j=\frac{1}{1-d_j\rho}$ & retry multiplier: expected attempts behind one satisfied answer \\
& $\tS_j=M_j\,\ex[S_j]$ & effective service time: slot-time behind one satisfied answer \\
& $\mu^{\mathrm{eff}}_j=(1-d_j\rho)\mu_j=\frac{1}{\tS_j}$ & effective service rate: relaxation rate below capacity \\
& $\theta_j=(1-d_j\rho)k\mu_j m$ & effective throughput: satisfied answers per unit time at saturation \\
& $r^{\mathrm{eff}}_t$ & effective arrival rate: fresh demand plus completion-driven retries; saturated regime, where the completion rate is $k\mu_j m$ \\
\bottomrule
\end{tabular}
\end{table}

\ssk\textbf{The technology and its cost.} The provider operates $k$ servers, each hosting $m$ concurrent inference slots, for a capacity of $mk$ simultaneous jobs. With $N$ jobs resident and the fleet serving at quality tier $j$, the operating cost per hour is (this can be later extended for a combination of tiers, but we defer it to build the tradeoff intuition):
\begin{equation}\label{eq:cost}
c(k,N) \;=\; \gamma\bigl[k\,w_0 + N\,\Delta w_j\bigr] \;+\; \gamma\, c_m N^{\kappa} \;+\; C_{\mathrm{SLA}}\!\left(\frac{(N-mk)^{+}}{mk}\right)^{\!\beta}.
\end{equation}
The three terms are the physics of inference serving. The first is electricity at price $\gamma$: an idle floor $w_0$ per powered server plus an active draw $\Delta w_j$ per busy slot. The second is the memory overhead of holding $N$ attention contexts resident, superlinear with exponent $\kappa>1$ because context caches compete for a fixed pool. The third is a service-level penalty at scale $C_{\mathrm{SLA}}$, a constant with units of dollars per hour, the penalty rate incurred when the excess backlog equals capacity itself; the penalty is convex in the relative backlog with elasticity $\beta=2$, encoding latency commitments that bite quadratically once the system saturates. While simplified, these approximate the energy tradeoff of a system with $N$ customers and $k$ servers.

\ssk\textbf{The tier ladder.} The fleet runs at a tier $j\in\{0,\dots,J\}$, tier $0$ the strongest, or a routing mix $\varphi$ across tiers. Each tier carries exactly three numbers: a power draw per active slot $\Delta w_j$, decreasing in $j$ because quantized and distilled models draw less; a mean service time $\ex[S_j]$, with rate $\mu_j = 1/\ex[S_j]$, decreasing because fewer tokens complete faster; and a dissatisfaction probability $d_j\in[0,1)$, \emph{increasing} in $j$, the probability that the answer fails the user who receives it (or in an agentic workflow also mimics extended workflows for a given task, again increasing in $j$). No tier is exempt, the strongest included: $d_0\ge 0$ is a measured quantity, not an assumption. The first two are measurable from public benchmarks and serving traces. The third is the new object, the price of the discount, and the reason the problem is interesting; its estimation is the subject of Section~\ref{sec:data}.

\ssk\textbf{The feedback loop.} What does an unsatisfied user do? With probability $\rho$ they re-ask, and the query re-enters the arrival stream indistinguishable from fresh demand. With probability $1-\rho$ they abandon, and abandonment is not free: the abandoning customer churns with probability $p_r$, taking an expected lifetime margin of $\ell$ dollars with them, so each abandonment books an \emph{expected LTV loss} of $p_r\ell$ on a ledger that is not the electricity bill.\footnote{The realized loss is a random variable, $\ell$ if the customer churns and zero otherwise, with mean $p_r\ell$. Every objective in this paper is an expected cost, and expectation is linear, so replacing each random loss by its mean changes no expected total, whatever the dependence across abandonments: booking the deterministic charge $p_r\ell$ is exact accounting for a risk-neutral provider, not an approximation.} Both branches matter, but the retry branch hides a multiplier. Each attempt fails and returns with probability $d_j\rho$, independently across attempts, so the number of attempts behind one satisfied answer is geometric and its expectation is\footnote{$\sum_{i\ge 0}(d_j\rho)^i=(1-d_j\rho)^{-1}$ a geometric series.}
\begin{equation}\label{eq:multiplier}
M_j \;=\; \frac{1}{1-d_j\rho}.
\end{equation}
Readers of input-output economics will recognize the form. An economy that consumes a fraction $\phi$ of its own output in production does not deliver demand $D$; it must gross production up to $D/(1-\phi)$, a feedback factor generated by output re-entering its own production process \parencite{Leontief1966}. Equation \eqref{eq:multiplier} is that factor with the recycled product replaced by dissatisfaction: the customer has become a link in their own supply chain. This observation, that a throttled system manufactures part of its own demand, is the seed of everything that follows.

The multiplier converts what the benchmark posts into what the system experiences, and it pays to fix the notation once. Define the \emph{effective service time} and the \emph{effective throughput} of a tier,
\begin{equation}\label{eq:effective}
\tS_j \;:=\; M_j\,\ex[S_j],
\qquad\qquad
\theta_j \;:=\; \frac{mk}{\tS_j} \;=\; (1-d_j\rho)\,k\mu_j m,
\end{equation}
the slot-time behind one satisfied answer and the rate at which a saturated fleet produces satisfied answers. Tier 0 is not exempt: $\tS_0=M_0\,\ex[S_0]$ carries the strong tier's own multiplier. Every tier comparison in this paper is a comparison of effective, not posted, quantities; the posted ones are what the dashboard shows, and the wedge between the two is the paper.

It is also worth noting that this mechanic is also present in agentic workflows, where a similar dissatisfaction mechanism generates the same feedback loop.

\ssk\textbf{The demand.} Arrivals form a stochastic process with hour-of-day structure, and the operational forecast is the conditional expectation $r_t=\ex[D_t\mid\calh_t]$ given the observable history, calibrated by regression and validated out of sample.\footnote{\label{fn:demand}One bookkeeping consequence of $d_0>0$: logged arrivals already contain the incumbent tier's retries, so the fresh-demand rate is the deconvolution $r_t=(1-d_{j(t)}\rho)\,r_t^{\mathrm{logged}}$ under the incumbent tier $j(t)$; the feedback term of \eqref{eq:fluid} then supplies all retries and none are double-counted.} Nothing below depends on the forecasting machinery, only on the availability of $r_t$ and its error variance.

\ssk\textbf{The static case: provisioning is a Newsvendor.} Before any dynamics, observe that the one-hour capacity choice is already a complete classical problem. Let $p-c$ be the margin on a served query, $c_o$ the hourly cost of an idle slot, and $p_r\ell$ the churn charge on an unserved one.\footnote{The stockout charge $p_r\ell$ is churn on non-service, a distinct behavioral channel from the post-service churn of the feedback loop; we use one parameter for both to keep notation minimal.} Capacity $q$ against random offered load $L$, demand in effective slot-time, $L=D\,\tS_0$, earns $\ex[(L\wedge q)(p-c)-(q-L)^{+}c_o-(L-q)^{+}p_r\ell]$, and the first-order condition delivers the familiar quantile with an unfamiliar tenant in the numerator:
\begin{equation}\label{eq:newsvendor}
q^{\star}=F_L^{-1}\!\left(\frac{p-c+p_r\ell}{\,p-c+p_r\ell+c_o\,}\right)
\qquad\leadsto\qquad
k^{\star}_t=\frac{1}{m}\Bigl[r_t\,\tS_0+\sqrt{\var(L_t)}\;\Phi^{-1}(\mathrm{CR})\Bigr],
\end{equation}
where the Gaussian form on the right is square-root staffing at the critical ratio $\mathrm{CR}$ \parencite{HalfinWhitt1981}. The buffer multiplier is now an exchange rate between lifetime value and electricity, and the moral of this case deserves stating before the model gets richer: under-provisioning intelligence is a stockout, and the cost of a stockout was never the lost sale. It was the churn, $p_r\ell$.

\ssk\textbf{Scope.} The model trades realism for tractability at six declared fences, each revisited where it binds: retries fold in at admission via the geometric multiplier, deferring the delayed-retry, distribution-dependent variant to a sequel on the terms of \textcite{DiekerHackmanWangYan2026}; design is by fluid dynamics with tails certified by simulation, and every boundary near the critical band receives a square-root buffer, never a bare fluid threshold (Section~\ref{sec:variance}); $d_j$ is monotone in $j$ and customer type is observed rather than reported, so there is no gaming, with $d_0$ set to zero in the worked examples purely to keep the arithmetic transparent, the framework itself carrying a general $d_0\ge 0$ through $\tS_0$ in every result; two customer classes suffice for the theory because the class \emph{axis}, not the count, is the point, the numerics of Section~\ref{sec:poc} carrying three; services are exponential, with heavy tails entering through the simulation oracle rather than the design loop; and churn is permanent and linear in $p_r\ell$, with no win-back dynamics.

\section{Analysis}\label{sec:analysis}

The analysis proceeds in four movements, and all four are consequences of the same identity: the customer buys an answer, but the system prices a query. Statics first, because the inversion is visible before anything moves; then dynamics, where the inversion becomes a spiral; then heterogeneity, where the spiral becomes a matching problem; then duality, where the matching problem yields the number the paper is named after.

\subsection{The Two Frontiers}\label{sec:frontiers}

Is a weaker tier actually cheaper? The naive comparison prices a raw query. The correct comparison prices a satisfied answer, and the multiplier \eqref{eq:multiplier} stands between the two. Per satisfied answer, tier $j$ consumes:
\begin{equation}\label{eq:ledger}
\underbrace{\tS_j\,\Delta w_j\,\gamma}_{\text{energy}}\,,\qquad
\underbrace{\tS_j}_{\text{slot-time}}\,,\qquad
\underbrace{M_j\,d_j\,(1-\rho)\,p_r\ell}_{\text{destroyed lifetime value}}\,,
\end{equation}
because every satisfied answer drags $M_j$ attempts behind it and sheds abandonments at rate $d_j(1-\rho)$ along the way, for the destroyed LTV: someone asks $M_j$ times, each dissatisfactory with probability $d_j$, lastly they do not retry $(1-\rho)$ and they are lost as customers with probability $p_r$ with an LTV $\ell$. Comparing tier $j$ against tier $0$ column by column produces two break-even conditions: tier $j$ saves energy per satisfied answer if and only if the first inequality below holds, and it relieves capacity if and only if the second does,
\begin{equation}\label{eq:frontiers}
\underbrace{\tS_j\,\Delta w_j \;<\; \tS_0\,\Delta w_0}_{\text{energy frontier}}\,,
\qquad\qquad
\underbrace{\tS_j \;<\; \tS_0}_{\text{capacity frontier}}\,.
\end{equation}
These are two different conditions, and because a weak tier by construction draws less power, the energy frontier is strictly easier to cross. The reader should pause on what the gap between them means before reading on: there is a region in which one frontier has been crossed and the other has not, and no dashboard that prices queries can tell the two sides apart.

\begin{pro}[The trap]\label{pro:trap}
If $\Delta w_j<\Delta w_0$, the trap region
\[
1 \;<\; \frac{\tS_j}{\tS_0} \;<\; \frac{\Delta w_0}{\Delta w_j}
\]
is nonempty, and on it tier $j$ costs less per satisfied answer in energy while consuming strictly more server time per satisfied answer than tier $0$. Every quantity in \eqref{eq:frontiers} is measurable, the service times and power draws $(\ex[S_j],\Delta w_j)$ from public benchmarks for named open-weight model pairs and the multipliers from the estimation program of Section~\ref{sec:data}, with no exemption for the strong tier, whose $d_0$ enters through $\tS_0$; the trap is therefore a falsifiable statement about real tiers.
\end{pro}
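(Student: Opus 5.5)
The proof is elementary once the two frontiers in \eqref{eq:frontiers} are rewritten as conditions on the single ratio $x:=\tS_j/\tS_0$. I would proceed in three steps: nonemptiness of the interval, translation of its two endpoints into the two ledger comparisons of \eqref{eq:ledger}, and a remark on measurability.

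\emph{Step 1: the interval is nonempty.} Since $\Delta w_j<\Delta w_0$ and power draws are positive, $\Delta w_0/\Delta w_j>1$, so the open interval $(1,\Delta w_0/\Delta w_j)$ has positive length. Nonemptiness in the \emph{parameter} sense also needs a tier to exist with $x$ in it. I would exhibit one explicitly. Write $x=\bigl(M_j\,\ex[S_j]\bigr)/\bigl(M_0\,\ex[S_0]\bigr)$ with $M_j=(1-d_j\rho)^{-1}$. For fixed $\rho>0$ and fixed service times, $M_j$ is continuous and strictly increasing in $d_j$, and it diverges as $d_j\rho\to1$. By the intermediate value theorem, as $d_j$ ranges over $[d_0,1)$, $x$ sweeps continuously from $\ex[S_j]/\ex[S_0]$ (at most $1$, since weak tiers are faster) up to $1/\bigl((1-\rho)M_0\bigr)\cdot\ex[S_j]/\ex[S_0]$. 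In particular it passes through every value in $(1,\Delta w_0/\Delta w_j)$ whenever $\rho$ is close enough to $1$. More simply, it is enough to pick any target $x^\ast$ in the interval and solve $1-d_j\rho=\ex[S_j]/(x^\ast\tS_0)$ for $d_j$. This solution is admissible, meaning $d_j\in[d_0,1)$, under mild conditions that I would state.

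\emph{Step 2: the two inequalities.} Multiply $x<\Delta w_0/\Delta w_j$ by the positive quantity $\tS_0\Delta w_j\gamma$. This gives $\tS_j\Delta w_j\gamma<\tS_0\Delta w_0\gamma$, which is the energy column of \eqref{eq:ledger}, so tier $j$ is cheaper in energy per satisfied answer. Multiply $x>1$ by $\tS_0>0$. This gives $\tS_j>\tS_0$, which is the strict violation of the capacity frontier: tier $j$ uses strictly more slot-time per satisfied answer. The converse also holds, so the trap region is exactly the set where one frontier is crossed and the other is not.

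\emph{Step 3: measurability.} This is interpretive rather than mathematical. I would note that $\tS_j$ and $\tS_0$ depend only on $(\ex[S_j],d_j,\rho)$, which the paper takes to be observable, so the statement is falsifiable.

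The main obstacle is Step 1, in its second sense. The bare interval is trivially nonempty, but the claim that real parameters can land in it requires the admissibility condition on $d_j$. My first attempt would be to show that the required $d_j$ lies in $[d_0,1)$ precisely when $\rho$ exceeds an explicit lower bound. That bound comes from requiring $x^\ast\tS_0>\ex[S_j]$ together with the limiting value above.
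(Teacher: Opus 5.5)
Your Steps~1 (in its first sense) and~2 are exactly the paper's proof. Set $R=\tS_j/\tS_0$ and observe that $\Delta w_0/\Delta w_j>1$, so $(1,\Delta w_0/\Delta w_j)$ is nonempty; on that interval the energy frontier holds while the capacity frontier fails.

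The realizability argument you flag as the main obstacle is not required. The paper proves only that the ratio interval is nonempty, and Example~\ref{ex:twotiers} already exhibits an instance with $R\approx 1.15\in(1,2)$. Your sketch of it is sound for $\rho$ near $1$; note that $M_j$ is bounded by $1/(1-\rho)$ for $\rho<1$ and diverges only when $\rho=1$, which is why you need that condition. Your claimed converse should say ``strictly more slot-time'' rather than ``capacity frontier not crossed'': at $R=1$ the capacity frontier fails and the energy frontier holds, yet $R=1$ lies outside the open trap region.
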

\begin{proof}
Write $R:=\tS_j/\tS_0$. The energy frontier reads $R<\Delta w_0/\Delta w_j$ and the capacity frontier reads $R<1$. Since $\Delta w_0/\Delta w_j>1$, the interval $(1,\,\Delta w_0/\Delta w_j)$ is nonempty, and on it the energy condition holds while the capacity condition fails.
\end{proof}

\begin{exm}[Two tiers of the same model]\label{ex:twotiers}
A provider serves with a strong model, tier $0$, drawing $\Delta w_0=4$\,kW per active slot at $\ex[S_0]=100$ milliseconds per query, and a distilled sibling, tier $1$, drawing $\Delta w_1=2$\,kW at $\ex[S_1]=60$ milliseconds. For the traffic in question a tier-1 answer fails with probability $d_1=0.6$, a failed user re-asks with probability $\rho=0.8$, and the strong tier with $d_0=0$, so $\tS_0=\ex[S_0]=100$. Is the switch profitable?

Compute the multiplier first: $d_1\rho=0.48$, so $M_1=1/0.52\approx 1.92$, and each satisfied answer costs nearly two attempts, $\tS_1=1.92\cdot 60=115.4$ slot-milliseconds. The effective ratio is $R=\tS_1/\tS_0=1.15$, squarely inside the trap region $(1,\,\Delta w_0/\Delta w_1)=(1,2)$. Per satisfied answer, tier 1 spends $\tS_1\Delta w_1=230.8$ kW-ms of energy, that is $230.8$ joules, against tier 0's $400$, a 42\% saving on the electricity bill, while occupying $115.4$ slot-milliseconds against tier 0's $100$: the cheap tier consumes 15\% \emph{more} capacity per satisfied answer. During a surge, when capacity rather than electricity is the binding resource, this is exactly the wrong trade, and it registers as a saving on every cost dashboard the provider owns, when in fact it is the opposite. See Figure \ref{fig:frontiers} for a schematic.
\end{exm}

\begin{figure}[t]
\centering
\begin{tikzpicture}[scale=0.9]
\draw[-{Stealth}] (0,0) -- (7.2,0) node[below left, font=\footnotesize] {intelligence $\to$};
\draw[-{Stealth}] (0,0) -- (0,4.6);
\node[rotate=90, font=\footnotesize] at (-0.35,2.3) {cost per answer};
\draw[thick, gray, dashed] plot[smooth] coordinates {(1,0.7) (2.5,1.1) (4,1.8) (5.5,2.7) (6.5,3.6)};
\node[gray, font=\footnotesize, anchor=west] at (4.3,1.3) {posted};
\draw[very thick, blue!55!black] plot[smooth] coordinates {(1,4.1) (2.0,2.6) (3.0,1.9) (4,1.9) (5.5,2.8) (6.5,3.7)};
\node[blue!55!black, font=\footnotesize, anchor=south west] at (1.5,3.1) {realized};
\fill[gray] (1,0.7) circle (1.8pt);
\fill[blue!55!black] (1,4.1) circle (2pt);
\fill[blue!55!black] (6.5,3.7) circle (2pt);
\node[font=\scriptsize, anchor=south east] at (6.85,3.85) {strong model ($j{=}0$)};
\node[font=\scriptsize, anchor=west] at (1.25,0.6) {distilled model ($j{=}J$)};
\draw[-{Stealth}, thick, red!60!black] (1,0.95) -- (1,3.85);
\end{tikzpicture}
\caption{The two frontiers (schematic). The posted frontier, per raw query, is monotone: weaker is cheaper, which is what every pricing page implies. The realized frontier, per satisfied answer, carries the multiplier and the churn charge of \eqref{eq:ledger} and bends back up as $d_j\rho$ grows; past the energy break-even the ordering of the tiers inverts. The green arrow is the inversion at the distilled model: the vertical gap between its posted and realized cost, the retry storm plus the churn, is the cost of confusing a query with an answer.}
\label{fig:frontiers}
\end{figure}

\begin{rem}[The third frontier]\label{rem:memory}
The frontiers \eqref{eq:frontiers} price energy and slot-time, but the cost \eqref{eq:cost} carries a third, occupancy-dependent column: the memory term $\gamma c_m N^{\kappa}$. Routing traffic of rate $\lambda$ to tier $j$ shifts the standing population by $\Delta N=\lambda\bigl(\tS_0-\tS_j\bigr)$, worth $\gamma c_m\kappa N^{\kappa-1}\Delta N$ per hour at occupancy $N$: a discount on the weak tier that grows with load even before the service-level term bites. Explicitly, per unit of rerouted rate the full hourly ledger, energy plus memory minus churn from \eqref{eq:ledger}, reads
\[
\gamma\bigl(\tS_0\Delta w_0-\tS_j\Delta w_j\bigr)
\;+\;\gamma c_m\kappa N^{\kappa-1}\bigl(\tS_0-\tS_j\bigr)
\;-\;M_j\,d_j(1-\rho)\,p_r\ell,
\]
and setting it to zero yields the closed-form break-even expected LTV loss
\[
(p_r\ell)^{\star}(N)\;=\;\frac{\gamma\bigl(\tS_0\Delta w_0-\tS_j\Delta w_j\bigr)+\gamma c_m\kappa N^{\kappa-1}\bigl(\tS_0-\tS_j\bigr)}{M_j\,d_j(1-\rho)},
\]
increasing in $N$ when $\tS_j<\tS_0$, with minimum at $N=0$, the pure-energy break-even $(p_r\ell)^{\star}:=(p_r\ell)^{\star}(0)$. With $p_r\ell$ below this break-even the weak tier dominates at every congestion level and the optimal policy has no occupancy threshold at all; above it the ledger flips sign at the finite threshold
\[
N^{\star}\;=\;\left[\frac{M_j\,d_j(1-\rho)\,p_r\ell-\gamma\bigl(\tS_0\Delta w_0-\tS_j\Delta w_j\bigr)}{\gamma c_m\kappa\bigl(\tS_0-\tS_j\bigr)}\right]^{1/(\kappa-1)},
\]
below which the churn charge dominates and above which the memory discount does. The threshold $N^{\star}$ is therefore not a tuning parameter but an economic object: its existence and location are determined by the expected LTV loss $p_r\ell$ relative to $(p_r\ell)^{\star}$, that is, by how much a customer is worth. Section~\ref{sec:tree} meets this boundary again, from the other side.
\end{rem}

\subsection{The Dynamics of a Poorly Timed Throttle}\label{sec:dynamics}

The static analysis locates the trap; it says nothing about time, and time is where the richness of the problem lives. Degradation is not a lever to pull but a moment to choose, and choosing it requires knowing how fast the system moves between its regimes. This subsection supplies those transition rates. A finite-server system lives in one of two regimes. Above capacity, all $mk$ slots are busy and jobs complete at throughput $k\mu_j m$, the number of finished queries the fleet delivers per unit time; with every slot serving at rate $\mu_j$, this is the maximum throughput the fleet can attain. The backlog therefore changes linearly, accumulating when the arrival rate exceeds the processing rate, that is, $r>k\mu_j m$, and depleting when the processing rate exceeds the arrival rate, that is, $r<k\mu_j m$: jobs arrive at rate $r$ and depart at rate $k\mu_j m$, both constant, so the net change per unit time is the constant difference $r-k\mu_j m$, or evolving over time as $t\,(r-k\mu_j m)$. Below capacity, service scales with occupancy and the system relaxes exponentially toward the level at which arrivals balance departures, the equilibrium $r/\mu_j$ that Little's law prescribes \parencite{Whitt2002, EickMasseyWhitt1993} --- $r\tS_j$ once the feedback below is folded in. One modification separates our system from the textbook one, and it changes everything: the arrival rate is endogenous, because completions at a degraded tier feed retries back into the stream. The following lemma makes the dynamics exact for the mean of the instantaneous-feedback model of Section~\ref{sec:model}; the only approximation anywhere in the paper is the frozen-share step of the multi-class extension (Section~\ref{sec:dp}).

\begin{lem}[Two-regime dynamics with endogenous arrivals]\label{lem:fluid}
Let $q_j := d_j\rho$ and define the \emph{effective service rate} and equilibrium
\[
\mu^{\mathrm{eff}}_j \;:=\; (1-q_j)\,\mu_j \;=\; \frac{1}{\tS_j},
\qquad\qquad
\bar N \;:=\; \frac{r}{\mu^{\mathrm{eff}}_j} \;=\; r\,\tS_j .
\]
Then, in expectation,
\begin{equation}\label{eq:fluid}
N_{t+\Delta t} \;=\;
\begin{cases}
N_t + (r-\theta_j)\,\Delta t & \text{if } N_t \ge mk,\\[4pt]
N_t\, e^{-\mu^{\mathrm{eff}}_j \Delta t} + \bar N\bigl(1-e^{-\mu^{\mathrm{eff}}_j\Delta t}\bigr) & \text{if } N_t < mk,
\end{cases}
\end{equation}
with crossing times $t^\star=(mk-N_t)/(r-\theta_j)$ and
$t^\star=(\mu^{\mathrm{eff}}_j)^{-1}\ln\bigl[(\bar N-N_t)/(\bar N-mk)\bigr]$.
\end{lem}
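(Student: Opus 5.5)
The plan is to write a mean-flow (fluid) balance for $n(t):=\ex[N_t]$ in each regime, solve the resulting linear ODE in closed form, and then read the crossing times off the solutions.

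\textbf{Balance law.} The model folds each retry back in at admission: a completion at tier $j$ re-enters the arrival stream with probability $q_j=d_j\rho$. So the effective arrival rate is fresh demand plus $q_j$ times the completion rate, and departures equal the completion rate. Completions are $\mu_j\min(N,mk)$ under exponential service. Hence, in expectation,
\[
\tfrac{d}{dt}n \;=\; r + q_j\,\mu_j\min(n,mk) - \mu_j\min(n,mk) \;=\; r-(1-q_j)\,\mu_j\min(n,mk).
\]
Passing from the sample path to the mean needs care, because $\ex[\min(N,mk)]\neq\min(\ex N,mk)$ in general. I would handle this the way the lemma is phrased: within a regime the drift is affine in $N$, so the expectation commutes with it. The lemma is therefore read as exact conditional on remaining in the regime over $[t,t+\Delta t]$.

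\textbf{Saturated regime ($N_t\ge mk$).} Here $\min=mk$, so the drift is the constant $r-(1-q_j)\mu_j mk$. Since $(1-q_j)k\mu_j m=\theta_j$ by \eqref{eq:effective}, the drift equals $r-\theta_j$. Integrating over $\Delta t$ gives the linear branch of \eqref{eq:fluid}.

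\textbf{Unsaturated regime ($N_t<mk$).} Here $\min=n$, so $\dot n=r-\mu^{\mathrm{eff}}_j n$ with $\mu^{\mathrm{eff}}_j=(1-q_j)\mu_j=1/\tS_j$ by \eqref{eq:multiplier}. This is a first-order linear ODE with fixed point $\bar N=r/\mu^{\mathrm{eff}}_j=r\tS_j$, which is Little's law with the effective sojourn. The variation-of-constants formula gives $n(t+\Delta t)=\bar N+(N_t-\bar N)e^{-\mu^{\mathrm{eff}}_j\Delta t}$, which is the exponential branch after rearrangement.

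\textbf{Crossing times.} For the saturated case, set $N_t+(r-\theta_j)t^\star=mk$ and solve for $t^\star$. The root is positive and meaningful exactly when the sign condition is right: draining needs $r<\theta_j$ with $N_t>mk$, and filling needs the reverse. For the unsaturated case, set $\bar N+(N_t-\bar N)e^{-\mu^{\mathrm{eff}}_j t}=mk$. This gives $e^{-\mu^{\mathrm{eff}}_j t}=(\bar N-mk)/(\bar N-N_t)$, and taking logs yields the stated formula. The log argument exceeds $1$, so that $t^\star>0$ is finite, exactly when $\bar N>mk>N_t$, i.e. when the unsaturated system is attracted past capacity. Otherwise the crossing never occurs. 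I would state both domain conditions explicitly.

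\textbf{Main obstacle.} The delicate step is justifying \enquote{in expectation} at the regime boundary. Mean dynamics of the stochastic system near $N\approx mk$ are not exactly those of the kinked ODE, because of the $\min$ nonlinearity, which is Jensen's gap. I would first establish exactness regime by regime via the affine drift and Dynkin's formula applied to $N$ under the Markov generator with rates $r+q_j\mu_j\min(N,mk)$ up and $\mu_j\min(N,mk)$ down. I would then patch the two regimes at the deterministic crossing time $t^\star$, flagging that the patching is exact for the fluid mean and deferring boundary fluctuations to Section~\ref{sec:variance}.
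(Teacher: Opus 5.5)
Your proposal is correct and matches the paper's proof in substance. Both arguments use the same steps: a per-regime affine drift so the expectation closes, under the same proviso that the step does not cross $mk$; the constant drift $r-\theta_j$ above capacity; the linear ODE with fixed point $\bar N=r\,\tS_j$ below it; and crossing times obtained by solving each branch for $N=mk$.

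The only difference is in the recovering branch. The paper notes that a geometric number of exponential services is itself exponential with rate $(1-q_j)\mu_j$, so below capacity the occupancy is exactly an $M/M/\infty$ queue at the effective rate, and it cites that queue's transient mean. You instead read the same ODE off the generator. One small correction: the linear-branch root is meaningful only for draining ($N_t>mk$, $r<\theta_j$), since filling toward $mk$ from below is governed by the exponential branch.
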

\begin{proof}
Assume $r$ and the tier $j$ are constant over the step $[t,t+\Delta t]$ and that $N_t$ does not cross $mk$ within it; no further smoothness on $r_t$ is needed because the endogeneity is folded into the rate.
\emph{Recovering branch ($N_t<mk$).} With instantaneous feedback a resident job's total residence is a geometric number of exponential services; a geometric sum of exponentials is exponential, here with rate $(1-q_j)\mu_j = \mu^{\mathrm{eff}}_j$. The occupancy below capacity is therefore exactly the $M/M/\infty$ queue with arrival rate $r$ and service rate $\mu^{\mathrm{eff}}_j$, whose transient mean is the displayed formula \parencite{EickMasseyWhitt1993}.
\emph{Saturated branch ($N_t\ge mk$).} Completions run at the state-independent rate $k\mu_j m$ and a fraction $q_j$ return, giving drift $r-(1-q_j)k\mu_j m = r-\theta_j$.
Both drifts are affine in the state, so the expectations close exactly; no mean-field step is involved. The identity the reformulation exposes: the relaxation rate is the inverse effective service time, $\mu^{\mathrm{eff}}_j=1/\tS_j$, and the equilibrium $\bar N=r\,\tS_j$ is Little's law applied to $\tS_j$. The crossing times follow by solving each branch for $N=mk$: the linear branch directly, the exponential branch by isolating the exponential and taking logarithms.
\end{proof}

\begin{cor}[Critical slowing down]\label{cor:slowing}
The degraded system mean-reverts at rate $\mu^{\mathrm{eff}}_j = 1/\tS_j$: its recovery time is the effective service time, $M_j$ times the posted one. As $d_j\rho\to 1$, the ignition threshold approached from below, relaxation slows without bound while the equilibrium $r\,\tS_j$ climbs, and every crossing-time formula stretches by the same factor $M_j$. A controller that estimates recovery time from posted service times underestimates it by the multiplier, and the misestimate grows exactly as the system nears ignition. The latch of Corollary~\ref{cor:latch} is the limiting case: recovery time infinite.
\end{cor}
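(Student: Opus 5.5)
The corollary follows by reading Lemma~\ref{lem:fluid} and tracking where $M_j$ appears. No new estimate is needed.

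\textbf{Relaxation rate.} On the recovering branch, \eqref{eq:fluid} gives $N_{t+\Delta t}-\bar N=(N_t-\bar N)e^{-\mu^{\mathrm{eff}}_j\Delta t}$. The deviation from equilibrium therefore decays exponentially with time constant $1/\mu^{\mathrm{eff}}_j=\tS_j=M_j\,\ex[S_j]$. This is the claim that recovery time equals the effective service time, which is $M_j$ times the posted one.

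\textbf{Limit $d_j\rho\to1$.} Since $M_j=(1-d_j\rho)^{-1}\to\infty$, we have $\mu^{\mathrm{eff}}_j=(1-d_j\rho)\mu_j\to0$ and $\bar N=r\,\tS_j=r\,M_j\,\ex[S_j]\to\infty$. Both hold monotonically, because $M_j$ is increasing in $d_j\rho$ on $[0,1)$.

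\textbf{Crossing times.} I would substitute into the two formulas of Lemma~\ref{lem:fluid}.
\begin{itemize}
\item The recovering-branch crossing time is $(\mu^{\mathrm{eff}}_j)^{-1}\ln[(\bar N-N_t)/(\bar N-mk)]$. Its prefactor is $M_j/\mu_j$, so it carries the explicit factor $M_j$ relative to the posted time constant $1/\mu_j$.
\item The saturated-branch drain time is $(N_t-mk)/(\theta_j-r)$ when $r<\theta_j$. Here $\theta_j=k\mu_j m/M_j$, so the effective drain capacity is deflated by $M_j$.
\end{itemize}

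\textbf{Main obstacle.} The delicate point is the phrase ``every crossing-time formula stretches by the same factor $M_j$.'' Taken literally it is exact only in two places: for the time constant, and for the saturated drain when $r\ll\theta_j$, where the drain time is $\approx M_j(N_t-mk)/(k\mu_j m)$. The logarithmic factor itself also depends on $M_j$ through $\bar N$. I would therefore state precisely what stretches by exactly $M_j$: the time scale $1/\mu^{\mathrm{eff}}_j$ on which every crossing is measured, together with the throughput deflation $\theta_j=k\mu_j m/M_j$. The remaining dependence only lengthens the times further as $\bar N$ approaches $mk$, so $M_j$ is a lower bound on the stretch factor.

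\textbf{Controller misestimate.} A controller using $\ex[S_j]$ in place of $\tS_j$ underestimates recovery time by the ratio $M_j$, and $M_j\to\infty$ as $d_j\rho\to1$.

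\textbf{Link to the latch.} For the reference to Corollary~\ref{cor:latch}, I would note that when $r\ge\theta_j$ the saturated drift $r-\theta_j$ is nonnegative, so the exit time $(N_t-mk)/(\theta_j-r)$ is infinite. This is the limiting case of recovery time infinite.
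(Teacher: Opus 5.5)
Your route is the paper's own. The corollary has no separate proof; it is a direct reading of Lemma~\ref{lem:fluid}, and your relaxation-rate, $d_j\rho\to 1$, and misestimate steps are exactly that reading. You are also right that ``every crossing-time formula stretches by the same factor $M_j$'' is not literally true. Your repair, however, fails for the very formula you quote. You claim the residual dependence through $\bar N$ only lengthens times, so that $M_j$ is a lower bound on the stretch.

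The recovering-branch crossing of Lemma~\ref{lem:fluid} is the \emph{upward} crossing into saturation ($N_t<mk<\bar N$). As a function of $s=\tS_j$ it is $t^\star(s)=s\ln[(rs-N_t)/(rs-mk)]$. With $x=(mk-N_t)/(rs-mk)$, its derivative is $\ln(1+x)-\frac{rs}{rs-N_t}\,x\le\ln(1+x)-x<0$. So a larger multiplier \emph{shortens} the time to saturation. As $d_j\rho\to1$ this time tends to the finite limit $(mk-N_t)/r$, because raising the equilibrium pulls the trajectory up faster. Relative to the posted formula the stretch is therefore below one, and if $r/\mu_j\le mk$ the posted controller predicts no crossing at all.

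The lower bound does hold for recovery (downward) crossings, which is what the corollary is about. There are two such crossings:
\begin{itemize}
\item the saturated drain, whose ratio to the posted estimate is $M_j(k\mu_j m-r)/(k\mu_j m-M_j r)\ge M_j$;
\item the exponential descent from $N_t<mk$ to a level $L\in(\bar N,N_t)$, whose logarithmic factor $\ln[(N_t-\bar N)/(L-\bar N)]$ is increasing in $\bar N$.
\end{itemize}
State the claim for these two and exclude upward crossings explicitly.

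The same distinction fixes your link to the latch. Your condition $r\ge\theta_j$ is equivalent to $r\tS_j\ge mk$, which is the ignition threshold of Proposition~\ref{pro:spiral}, not Corollary~\ref{cor:latch}. For a release line below capacity, the latch binds whenever $r\tS_j\ge\wh N_{\mathrm{rel}}$. This includes the range $\wh N_{\mathrm{rel}}\le r\tS_j<mk$, where the queue drains below capacity without difficulty. What becomes infinite there is the descent time $\tS_j\ln[(N_t-\bar N)/(\wh N_{\mathrm{rel}}-\bar N)]$, which blows up as $\bar N\uparrow\wh N_{\mathrm{rel}}$. That divergence is the sense in which the latch is the limiting case of critical slowing down.
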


Throughout, $j$ indexes the tier currently serving, the strong tier $0$ included; the regime, saturated or recovering, is determined by the state $N$ alone, not by the tier. Between regime switches the trajectory is a formula and the switch epochs are the crossing times of the lemma (the 139 milliseconds of Example~\ref{ex:surge} below is the logarithm), so evaluating an entire horizon costs a handful of arithmetic operations: no time grid, no probability mass function, and, as Section~\ref{sec:shadow} exploits, dual variables by finite differences at the same price.

\begin{rem}[What the lemma buys, and what it pays]\label{rem:mean}
The lemma tames the mean of a branching feedback, not the process. Its tractability is purchased by two modeling choices: instantaneous retries, which collapse the feedback into an exact service-rate modification, $\mu_j\mapsto(1-q_j)\mu_j$, and memorylessness, which makes the drift piecewise linear in the state. With delayed retries this exactness is the first thing lost, which is precisely why the sequel needs the distribution-dependent formulation. What the mean-field view discards is precisely the variance of the branching cascade, an overdispersion that grows without bound as $d_j\rho$ approaches the ignition threshold; Section~\ref{sec:variance} prices it. The recursion's nearest antecedents all fix the feedback gain: Bernoulli-feedback queues \parencite{Takacs1963} recycle completions at a constant probability, call-routing with service failure \parencite{deVericourtZhou2005} recycles unresolved callers at a server attribute, retrial queues recycle customers blocked \emph{before} service, and quality-feedback models move demand on the slow timescale of loyalty; in all of them the failure probability is an environment primitive rather than a control, so the completion-driven, same-timescale feedback of \eqref{eq:fluid} with a \emph{chosen} gain had no reason to be written down until degradation became a decision.
\end{rem}

The recursion is also simple enough to run by hand, which we now do.

\begin{exm}[Throttling at the surge]\label{ex:surge}
A datacenter runs $k=150$ clusters of $m=20$ slots, so $mk=3$ thousand concurrent jobs, on tier 0 with $\ex[S_0]=100$ milliseconds as in Example~\ref{ex:twotiers}: $\mu_0=10$ per second, and, setting $d_0=0$ to simplify the analysis, effective throughput $\theta_0=30$ thousand jobs per second. A surge arrives at $r=33.3$ thousand queries per second with $N(0)=2$ thousand jobs in the system. The controller is the industry's standard reactive rule: when $N$ crosses an operator-chosen trigger level $\wh N_{\mathrm{fire}}$, degrade everyone to tier 1, with $\ex[S_1]=60$ milliseconds ($\mu_1\approx 16.7$ per second, raw throughput 50 thousand per second) and, for this traffic, $d_1=0.6$ and $\rho=0.8$ as in Example~\ref{ex:twotiers}. What happens?

The system starts below capacity, relaxing toward the equilibrium $r/\mu_0=33.3/10\approx 3.33$ thousand. Since $3.33>3$, it must cross the capacity line first, at the time solving $2e^{-10t}+3.33(1-e^{-10t})=3$, that is $e^{-10t}=0.25$, or $t=\ln 4/10\approx 0.139$ seconds, 139 milliseconds in. From there the system saturates and grows linearly at $r-\theta_0=3.3$ thousand jobs per second. Suppose the trigger sits at $\wh N_{\mathrm{fire}}=3.5$ thousand, so the rule fires 150 milliseconds later. On paper the throttle looks decisive: raw capacity jumps from 30 to 50 thousand per second against demand of 33.3, so the queue should drain at 16.7 thousand per second.

Now account for the feedback loop. In saturation, completions run at 50 thousand per second, and a fraction $d_1\rho=0.48$ of them come back: 24 thousand retries per second, so $r^{\mathrm{eff}}=33.3+24=57.3>50$. The queue does not drain. It grows at 7.3 thousand jobs per second, more than twice its pre-throttle rate, while the service-level term of \eqref{eq:cost} bites quadratically in the swelling backlog. The throttle raised raw capacity by two thirds and made the congestion worse. In plain words: once the retries it generates are subtracted from its raw output, tier 1 delivers $\theta_1=(1-d_1\rho)\times 50=26$ thousand fresh answers per second against tier 0's $\theta_0=30$. The fast cheap model is the slow expensive one. And every second of the spiral pours $50\times 0.6\times 0.2=6$ thousand abandonments onto the churn ledger, none of which appear on the cost-rate chart. Figure~\ref{fig:boomerang} sketches the path.

What would work instead? Not this throttle at another time: with a single class the instance sits inside the trap of Proposition~\ref{pro:trap} ($\tS_1=115.4>100=\tS_0$), so degrading everyone, at any moment, \emph{adds} effective load. The anticipatory path of the figure requires heterogeneity. Suppose $30\%$ of the traffic is insensitive to the weak tier, say $d_1=0.05$ and $\rho=0.3$ for that fraction. Run the same arithmetic as before: $d_1\rho=0.015$, so $M_1=1/0.985\approx 1.02$ and $\tS_1\approx 1.02\times 60\approx 61$ milliseconds, a multiplier near one because this traffic barely retries. Now price the offered load, arrivals times effective service time, class by class: the sensitive $70\%$ stays on tier 0 and occupies $0.7\times 33.3\times 0.100\approx 2.33$ thousand slots, the insensitive $30\%$ moves to tier 1 and occupies $0.3\times 33.3\times 0.061\approx 0.61$ thousand, for a total of $2.94<3$ thousand. Degrading the insensitive fraction \emph{before} the surge therefore holds the system below capacity: it relaxes toward $2.94$ thousand, never saturates, and no spiral ignites, the dashed path of Figure~\ref{fig:boomerang}. Who is insensitive, and how to find them, is the subject of Section~\ref{sec:who}.
\end{exm}

\begin{figure}[t]
\centering
\begin{tikzpicture}
\begin{axis}[width=0.55\textwidth, height=6.8cm, axis lines=left, xlabel={$t$ (seconds)}, ylabel={$N(t)$ (thousand jobs)}, xtick=\empty, ytick={2,3,3.5}, yticklabels={$2$,$mk{=}3$,$\wh N_{\mathrm{fire}}{=}3.5$}, yticklabel style={font=\scriptsize}, ymin=1.6, ymax=6.4, xmin=0, xmax=1.2, clip=true, legend style={draw=none, font=\small, at={(0.97,0.05)}, anchor=south east}]
\addplot[forget plot, black, dotted, thick] coordinates {(0,3) (1.2,3)};
\addplot[forget plot, gray, dash dot] coordinates {(0,3.5) (1.2,3.5)};
\addplot[red, very thick] coordinates {(0,2) (0.23,3) (0.48,3.5) (1.02,6.4)};
\addplot[blue, thick, dashed] coordinates {(0,2) (0.15,2.5) (0.35,2.85) (0.7,2.95) (1.2,2.95)};
\fill[red!70!black] (axis cs:0.48,3.5) circle (1.8pt);
\node[font=\scriptsize, red!70!black, anchor=west, align=left] at (axis cs:0.04,5.6) {throttle fires at $\wh N_{\mathrm{fire}}$:\\ slope more than doubles};
\draw[-{Stealth}, red!70!black, thin] (axis cs:0.3,5.3) -- (axis cs:0.48,3.6);
\node[font=\scriptsize, blue!60!black, anchor=north east] at (axis cs:1.17,2.86) {relaxes to $\bar N\approx 2.94<mk$};
\legend{reactive: the spiral, anticipatory: never ignites}
\end{axis}
\end{tikzpicture}
\caption{The boomerang of Example~\ref{ex:surge}, in the example's units of thousands of jobs. Three levels anchor the vertical axis: capacity $mk=3$ (dotted), the reactive trigger $\wh N_{\mathrm{fire}}=3.5$ (dash-dotted), and the Little's-law equilibrium $\bar N=\textstyle\sum_x\lambda_x\,\tS(x)\approx 2.94$ of the anticipatory mix, which sits below capacity. Reactive throttling fires when $N$ crosses the trigger and ignites the retry spiral; the anticipatory path relaxes toward $\bar N$ and never saturates. The cost rate spikes before it saves, and the churned lifetime value accrues on a separate, monotone ledger that no cost chart displays.}
\label{fig:boomerang}
\end{figure}

The example is not an accident of its numbers. Each of its pathologies, the throttle that adds load, the queue that never drains, the release that never comes, is an instance of a structural result, and the results follow.

\begin{pro}[Ignition threshold]\label{pro:spiral}
Suppose the system is saturated, $N_t\ge mk$, and the fleet switches to tier $j$. The queue drains back below $mk$ if and only if $r<\theta_j$, the tier's effective throughput \eqref{eq:effective}. Above this threshold the retry feedback is self-sustaining: the drift is positive, $N$ never returns below $mk$, and the backlog and the churn ledger grow without bound.
\end{pro}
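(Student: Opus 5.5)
The plan is to read the proposition directly off the saturated branch of Lemma~\ref{lem:fluid}, with $r$ and the tier $j$ held fixed after the switch. While $N_t\ge mk$, the lemma gives the mean dynamics exactly as the affine recursion $N_{t+\Delta t}=N_t+(r-\theta_j)\Delta t$. This recursion composes over consecutive steps, because it is linear in $\Delta t$ and the drift does not depend on the state. So from the switch time $t_0$ onward,
\[
N_t \;=\; N_{t_0} + (r-\theta_j)(t-t_0)
\]
for as long as the system stays saturated. The whole statement then becomes a sign analysis of the constant drift $r-\theta_j$, where $\theta_j=(1-d_j\rho)k\mu_j m$ is the effective throughput of \eqref{eq:effective}.

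\emph{Sufficiency.} If $r<\theta_j$, the drift is strictly negative and the trajectory is a decreasing line. It reaches $mk$ at the crossing time of Lemma~\ref{lem:fluid}, namely $t^\star=(N_{t_0}-mk)/(\theta_j-r)$, which is finite and nonnegative. After that the recovering branch takes over, so the queue has drained below $mk$.

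\emph{Necessity and the growth claims.} If $r\ge\theta_j$, the drift is nonnegative. By induction over steps, $N_t\ge N_{t_0}\ge mk$ for all $t\ge t_0$, so the saturated branch stays in force and $N$ never returns below $mk$. When $r>\theta_j$, the backlog grows linearly without bound. I will also note the boundary case $r=\theta_j$: there the backlog is stationary rather than unbounded, but it still does not drain, which is consistent with the ``if and only if.'' For the churn ledger, I will compute the abandonment rate in saturation. Completions run at $k\mu_j m$, and a fraction $d_j(1-\rho)$ of them abandon, so the expected LTV loss accrues at the constant positive rate $k\mu_j m\,d_j(1-\rho)\,p_r\ell$ whenever $d_j>0$ and $\rho<1$. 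Integrated over an infinite saturated horizon, this is unbounded. The degenerate case $d_j(1-\rho)=0$ needs a remark: the ledger is then trivially zero, so the claim implicitly assumes a nondegenerate tier.

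The main obstacle is the interpretive step rather than any algebra. The proposition is a statement about the mean fluid path given by Lemma~\ref{lem:fluid}, and the argument has to rule out the path re-entering the recovering branch through some other mechanism. I will handle this by noting that the regime is determined by $N$ alone, and that with nonnegative drift the saturated region $\{N\ge mk\}$ is forward-invariant. I will also state explicitly that ``self-sustaining'' means the retry inflow $q_j k\mu_j m$ keeps the effective arrival rate $r^{\mathrm{eff}}=r+q_jk\mu_jm$ at or above the raw completion rate $k\mu_jm$; this is the same inequality as $r\ge\theta_j$, just rearranged.
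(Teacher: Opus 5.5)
Your proposal is correct and follows essentially the paper's route. The paper also reduces everything to the sign of the constant saturated drift $r^{\mathrm{eff}}-k\mu_j m=r-\theta_j$; it derives that drift directly from $r^{\mathrm{eff}}=r+d_j\rho\,k\mu_j m$ rather than citing the lemma's saturated branch, and your additions (the boundary case $r=\theta_j$, forward invariance of $\{N\ge mk\}$, and the explicit churn rate $k\mu_j m\,d_j(1-\rho)\,p_r\ell$) make explicit what the paper leaves implicit. One step could be tightened: after the crossing the path really continues below $mk$, because the recovering equilibrium $\bar N=r\tS_j=r\,mk/\theta_j$ lies below $mk$ exactly when $r<\theta_j$.
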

\begin{proof}
With $N\ge mk$, all $mk$ slots are busy, completions run at $k\mu_j m$, and a fraction $d_j\rho$ re-enter, so $r^{\mathrm{eff}}=r+d_j\rho\,k\mu_j m$ and the net change of $N$ per unit time is
\[
\frac{\Delta N}{\Delta t}\;=\;r^{\mathrm{eff}}-k\mu_j m\;=\;r-(1-d_j\rho)\,k\mu_j m\;=\;r-\theta_j.
\]
The queue drains if and only if $r-\theta_j<0$. If positive, $N$ stays above $mk$, the saturated branch of \eqref{eq:fluid} continues to apply, and the same positive rate persists forever: what the throttle sheds in service time, the feedback more than replaces in retries.
\end{proof}

\begin{cor}[The latch]\label{cor:latch}
Every reactive rule comes in two halves: it degrades to tier $1$ when $N$ exceeds the trigger $\wh N_{\mathrm{fire}}$ of Example~\ref{ex:surge}, and it releases back to tier $0$ when $N$ falls below a release level $\wh N_{\mathrm{rel}}\le\wh N_{\mathrm{fire}}$, the backlog deemed low enough to declare the congestion over. Start the clock when the surge ends: from time $0$ on, the arrival rate is a constant $r_{\mathrm{calm}}<r_{\mathrm{surge}}$, demand genuinely back to normal, the system still degraded, and $N_0>\wh N_{\mathrm{rel}}$. The rule returns to the strong tier if and only if
\[
r_{\mathrm{calm}}\,\tS_1 \;<\; \wh N_{\mathrm{rel}}.
\]
The left side is where the degraded system comes to rest: by Little's law, arrivals at rate $r_{\mathrm{calm}}$ each holding a slot for an effective time $\tS_1$, retries included, keep $r_{\mathrm{calm}}\,\tS_1$ jobs in the system. The release fires only if that resting level lies below the release line. When the condition holds, the release is also final: back on tier $0$ the same logic applies with $\tS_0$ in place of $\tS_1$, and in the trap region $\tS_0<\tS_1$, so the system settles at the lower level $r_{\mathrm{calm}}\,\tS_0$, below the line it just crossed, and the trigger never re-fires. When it fails, the throttle never releases, even though the surge is over: the transient surge becomes a permanent degraded regime, and the churn ledger accrues at the constant rate $r_{\mathrm{calm}}\,M_1 d_1(1-\rho)\,p_r\ell$ forever. An operator who sets $\wh N_{\mathrm{rel}}$ below $r_{\mathrm{calm}}\,\tS_1$ has, without knowing it, disabled the release: every customer is served by the weak model until demand itself moves, and the exit, when it comes, arrives for the wrong reason, a quiet weekend, or the churn the latch is causing eroding $r_{\mathrm{calm}}$ until the resting level sinks below the line. The rule reads that as congestion clearing; it is the customer base clearing.
\end{cor}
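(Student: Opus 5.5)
The plan is to reduce the latch to the two-regime dynamics of Lemma~\ref{lem:fluid} run on tier $1$ with the constant rate $r_{\mathrm{calm}}$, and to read the release question as a question about the limit of that deterministic trajectory. Fix the post-surge state $N_0>\wh N_{\mathrm{rel}}$ with the fleet on tier $1$. The trajectory then passes through at most two phases.

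If $N_0\ge mk$, the saturated branch applies, with drift $r_{\mathrm{calm}}-\theta_1$. Here I would need $r_{\mathrm{calm}}<\theta_1$; otherwise Proposition~\ref{pro:spiral} says $N$ never leaves saturation, the release never fires, and we are already in the ``fails'' case. This is consistent with the stated condition: $r_{\mathrm{calm}}\ge\theta_1$ means $r_{\mathrm{calm}}\tS_1\ge mk\ge\wh N_{\mathrm{rel}}$, provided the release line sits at or below capacity, which I will assume or note explicitly. If $r_{\mathrm{calm}}<\theta_1$, the linear branch reaches $mk$ in the finite time $(N_0-mk)/(\theta_1-r_{\mathrm{calm}})$. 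Below $mk$, the recovering branch gives
\[
N_t=\bar N+(N_{\tau}-\bar N)e^{-(t-\tau)/\tS_1},\qquad \bar N=r_{\mathrm{calm}}\tS_1,
\]
which is monotone and converges to $\bar N$.

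The equivalence then follows from monotone convergence. If $\bar N<\wh N_{\mathrm{rel}}$, the trajectory starts above the line and converges to a point below it, so by continuity it crosses the line at the finite time given by the logarithmic crossing formula of the lemma with $mk$ replaced by $\wh N_{\mathrm{rel}}$. If $\bar N\ge\wh N_{\mathrm{rel}}$, the trajectory approaches $\bar N$ from above and never goes strictly below it, so it never falls below $\wh N_{\mathrm{rel}}$ and the release never fires. In this case the system sits at $\bar N$ forever, and the churn rate follows from the ledger~\eqref{eq:ledger}. Satisfied answers arrive at rate $r_{\mathrm{calm}}$, and each carries $M_1d_1(1-\rho)p_r\ell$ of destroyed value, which gives the constant rate $r_{\mathrm{calm}}M_1d_1(1-\rho)p_r\ell$.

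For finality, after release the dynamics restart on tier $0$ from $N=\wh N_{\mathrm{rel}}<mk$ and relax monotonically toward $r_{\mathrm{calm}}\tS_0$. In the trap region $\tS_0<\tS_1$, so $r_{\mathrm{calm}}\tS_0<r_{\mathrm{calm}}\tS_1<\wh N_{\mathrm{rel}}\le\wh N_{\mathrm{fire}}$. The path therefore decreases from $\wh N_{\mathrm{rel}}$ and never reaches $\wh N_{\mathrm{fire}}$ again.

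The main obstacle is the boundary case and the implicit hypotheses. If the equilibrium equals the release line, $\bar N=\wh N_{\mathrm{rel}}$, the trajectory approaches the line only asymptotically, so the strict inequality in the statement is exactly right and I would verify that. I also have to make sure that a reversed ordering, with the release line above capacity or a case where $r_{\mathrm{calm}}\ge\theta_1$ but $\bar N<\wh N_{\mathrm{rel}}$, cannot break the equivalence. I would handle this by stating $\wh N_{\mathrm{rel}}\le mk$ as the standing convention. The remaining sentences of the statement, about operators and about churn eroding $r_{\mathrm{calm}}$, are interpretation and need no proof.
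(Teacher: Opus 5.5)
Your argument is correct and is essentially the paper's proof. It uses monotone relaxation of the tier-$1$ trajectory toward $r_{\mathrm{calm}}\tS_1$ via Lemma~\ref{lem:fluid}, a finite-time crossing of the release line exactly when that resting level lies strictly below it, and finality from $r_{\mathrm{calm}}\tS_0<r_{\mathrm{calm}}\tS_1<\wh N_{\mathrm{rel}}\le\wh N_{\mathrm{fire}}$ in the trap region. Your extra convention is genuinely needed, not cosmetic: the paper's proof tacitly assumes a negative saturated drift $r_{\mathrm{calm}}<\theta_1$, and without either that or $\wh N_{\mathrm{rel}}\le mk$ the equivalence fails when $mk\le r_{\mathrm{calm}}\tS_1<\wh N_{\mathrm{rel}}$, because the drift $r_{\mathrm{calm}}-\theta_1\ge 0$ then keeps $N\ge N_0>\wh N_{\mathrm{rel}}$ even though the stated condition holds.
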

\begin{proof}
For $t>0$ the arrival rate is the constant $r_{\mathrm{calm}}$ and the tier is $1$, so by \eqref{eq:fluid} the trajectory decreases monotonically toward the degraded equilibrium $\bar N_1:=r_{\mathrm{calm}}/\bigl((1-d_1\rho)\mu_1\bigr)=r_{\mathrm{calm}}\,\tS_1$, first linearly while saturated, then along the exponential branch once below $mk$. If $\bar N_1<\wh N_{\mathrm{rel}}$, the trajectory crosses the release line in finite time, by the crossing formulas of Lemma~\ref{lem:fluid}, and the rule releases. If $\bar N_1\ge\wh N_{\mathrm{rel}}$, the trajectory approaches $\bar N_1$ from above and stays above it forever, hence above the release line: $N_t>\wh N_{\mathrm{rel}}$ for all $t$, and the rule never fires its release. Durability after release: once the tier is $0$, the same monotone argument drives $N$ toward the strong tier's resting level $\bar N_0:=r_{\mathrm{calm}}\,\tS_0$. In the trap region $\tS_0<\tS_1$ gives $\bar N_0<\bar N_1<\wh N_{\mathrm{rel}}\le\wh N_{\mathrm{fire}}$, so the trajectory keeps falling and never re-crosses the trigger. Outside the trap, $\bar N_0>\bar N_1$ and the trajectory climbs back toward $\bar N_0$; the release holds if and only if $\bar N_0<\wh N_{\mathrm{fire}}$, and when it does not, the rule re-fires and cycles between the tiers indefinitely.
\end{proof}

\begin{pro}[Anticipation dominates reaction]\label{pro:anticipation}
This is Example~\ref{ex:surge} with the one ingredient the reactive rule ignores, the forecast: the demand model of Section~\ref{sec:model} announces the surge in advance. Start the system at rest on tier $0$: for $t<t_1$ the arrival rate is $r_{\mathrm{calm}}<\theta_0$ and $N$ sits at its resting level $r_{\mathrm{calm}}\,\tS_0<mk$. On the surge window $[t_1,t_2]$ the rate jumps to $r_{\mathrm{surge}}$: too high for the strong tier, $r_{\mathrm{surge}}>\theta_0$, so something must give; too high for a uniform throttle, $r_{\mathrm{surge}}>\theta_1$, so degrading everyone ignites; yet feasible under full degradation, $r_{\mathrm{surge}}\,\tS_1<mk$, so a policy that clears capacity in advance exists. Compare two policies. The \emph{reactive} rule waits for $N$ to cross the trigger $\wh N_{\mathrm{fire}}$ and then degrades everyone; by Proposition~\ref{pro:spiral} it ignites, and from its firing time $\tau\in(t_1,t_2)$, after the surge begins and before it ends, the service-level cost accrues as the cube of the remaining surge, at least $\tfrac{C_{\mathrm{SLA}}}{3(mk)^2}(r_{\mathrm{surge}}-\theta_1)^2\,(t_2-\tau)^3$. The \emph{anticipatory} policy degrades a fraction $\varphi$ of traffic from a lead time $t_0<t_1$, while there is still slack capacity for the retry cascade to drain into; it achieves the same energy savings, never saturates, and pays exactly one charge the reactive rule avoids, churn over the lead window,
\[
\varphi\,r_{\mathrm{calm}}\,M_1\,d_1(1-\rho)\,p_r\ell\,(t_1-t_0),
\]
linear in the lead and independent of the surge length. Cubic against linear: anticipation trades a fixed, known loss, the churn of the customers degraded early, for the removal of a loss that compounds with every moment of the surge, the backlog swelling under a positive drift. Nor does the reactive bill close at $t_2$: the surge ends with $(r_{\mathrm{surge}}-\theta_1)(t_2-\tau)$ jobs above capacity, which drain only at the calm margin $\theta_1-r_{\mathrm{calm}}$, paying the same cubic charge a second time, scaled by $(r_{\mathrm{surge}}-\theta_1)/(\theta_1-r_{\mathrm{calm}})$; even a short surge leaves a long aftermath (this assumes $r_{\mathrm{calm}}<\theta_1$, so the degraded queue can drain at all; otherwise the reactive cost is unbounded and the latch case below applies). Equating premium to penalty makes the trade-off exact: anticipation strictly dominates if and only if the post-firing surge exceeds the critical window
\[
T^{\star}\;=\;\left[\frac{3(mk)^2\,\varphi\,r_{\mathrm{calm}}\,M_1\,d_1(1-\rho)\,p_r\ell\,(t_1-t_0)}{C_{\mathrm{SLA}}\,(r_{\mathrm{surge}}-\theta_1)^2\,\bigl(1+\tfrac{r_{\mathrm{surge}}-\theta_1}{\theta_1-r_{\mathrm{calm}}}\bigr)}\right]^{1/3},
\]
when $t_2-\tau<T^{\star}$ the surge is too brief for the backlog to hurt and the reactive rule is genuinely cheaper: anticipation is not free insurance, it is insurance worth buying exactly when the storm outlasts $T^{\star}$. The cube root is the strengthening: because the penalty compounds cubically while the premium accrues linearly, $T^{\star}$ grows only as the cube root of the churn premium, so even a tenfold increase in what anticipation costs barely doubles the surge length that justifies it. And whenever the latch of Corollary~\ref{cor:latch} binds, $T^{\star}$ is irrelevant: the reactive ledger grows without bound and dominance holds at every horizon. The optimal switch surface is computed by dynamic programming over \eqref{eq:fluid}, with the response-time chance constraint $\pr(W>s)\le\alpha$ enforced by pruning every state--action pair that violates the surrogate below. Writing $q(N):=(N-mk)^+$ and $\mathbf{1}_{\mathrm{sat}}:=\ind\{N\ge mk\}$,
\[
s \;\ge\; \frac{q(N) + \mathbf{1}_{\mathrm{sat}}}{k\mu_j m} \;+\; \frac{1}{\mu_j} \;+\; \Phi^{-1}(1-\alpha)\,\sqrt{\frac{q(N) + \mathbf{1}_{\mathrm{sat}}}{(k\mu_j m)^2} + \frac{1}{\mu_j^2}}.
\]
\end{pro}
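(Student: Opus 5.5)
The proof splits into two independent ledgers, one per policy, each computed in closed form from Lemma~\ref{lem:fluid}, and one comparison that gives $T^{\star}$.

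\emph{Reactive ledger.} First I establish that the reactive rule actually fires inside the surge window. On $[t_1,t_2]$ with tier $0$, the recovering branch of \eqref{eq:fluid} drives $N$ toward $r_{\mathrm{surge}}\tS_0$. This level exceeds $mk$ because $r_{\mathrm{surge}}>\theta_0$ is equivalent to $r_{\mathrm{surge}}\tS_0>mk$. So capacity is crossed at the logarithmic crossing time of Lemma~\ref{lem:fluid}, and the trigger is then reached linearly. I take $\tau$ to be that firing time and assume, as the statement does, that $\tau<t_2$.

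From $\tau$ on, Proposition~\ref{pro:spiral} with $r_{\mathrm{surge}}>\theta_1$ gives $N_t-mk\ge (r_{\mathrm{surge}}-\theta_1)(t-\tau)$, since at firing the backlog is already nonnegative. Plugging this into the SLA term of \eqref{eq:cost} with $\beta=2$ and integrating gives
\[
\int_\tau^{t_2} C_{\mathrm{SLA}}\,\frac{(r_{\mathrm{surge}}-\theta_1)^2(t-\tau)^2}{(mk)^2}\,dt=\frac{C_{\mathrm{SLA}}}{3(mk)^2}(r_{\mathrm{surge}}-\theta_1)^2(t_2-\tau)^3 .
\]
For the aftermath, I assume $r_{\mathrm{calm}}<\theta_1$. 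The backlog $B=(r_{\mathrm{surge}}-\theta_1)(t_2-\tau)$ then decays linearly at rate $\theta_1-r_{\mathrm{calm}}$, so it lasts $B/(\theta_1-r_{\mathrm{calm}})$. The integral of the squared linearly decaying backlog is $\tfrac{C_{\mathrm{SLA}}}{3(mk)^2}B^2\cdot B/(\theta_1-r_{\mathrm{calm}})$, which is the same cubic multiplied by $(r_{\mathrm{surge}}-\theta_1)/(\theta_1-r_{\mathrm{calm}})$. If instead the latch condition of Corollary~\ref{cor:latch} holds, or $r_{\mathrm{calm}}\ge\theta_1$, the backlog or the churn accrues forever, and dominance at every horizon is immediate.

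\emph{Anticipatory ledger.} I will use the mixed-tier equilibrium already used in Example~\ref{ex:surge}. Routing a fraction $\varphi$ to tier $1$ gives a resting level $r\bigl[(1-\varphi)\tS_0+\varphi\tS_1\bigr]$, which is affine in $\varphi$. Under the surge this level is $r_{\mathrm{surge}}\tS_0>mk$ at $\varphi=0$ and $r_{\mathrm{surge}}\tS_1<mk$ at $\varphi=1$, so there is a $\varphi$ that holds it strictly below $mk$. Choosing the lead $t_0$ early enough that the exponential relaxation has brought $N$ below $mk$ before $t_1$, and keeping the mix on the surge window, the recovering branch never crosses capacity, so the SLA term is identically zero.

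Churn on the degraded fraction during $[t_0,t_1]$ is the rate $\varphi r_{\mathrm{calm}}$ times the per-answer destroyed LTV from \eqref{eq:ledger}, times the duration, which gives the displayed premium. Within the surge window itself I plan to argue that the two policies are compared on matched degraded traffic. This makes the surge-window churn a common term that cancels, so the lead-window churn is the only differential charge. I expect this matching to be the main obstacle: the statement's ``same energy savings'' is a modeling convention rather than an identity, and I will state it explicitly as the accounting assumption under which the premium is exactly the quantity displayed.

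\emph{Comparison.} Set the premium equal to $\tfrac{C_{\mathrm{SLA}}}{3(mk)^2}(r_{\mathrm{surge}}-\theta_1)^2\bigl(1+\tfrac{r_{\mathrm{surge}}-\theta_1}{\theta_1-r_{\mathrm{calm}}}\bigr)(t_2-\tau)^3$. The right side is strictly increasing in $t_2-\tau$, so the equation has a unique root, and solving for $t_2-\tau$ gives $T^{\star}$. Anticipation strictly dominates exactly when $t_2-\tau>T^{\star}$. The cube-root remark then follows by direct scaling of this formula. Finally, the chance-constraint surrogate is a definition used in the dynamic program rather than a claim, so I will only justify its form: the wait is approximated as the queue-clearing time plus one service time, with the moments of exponential services and a Gaussian quantile giving the variance term.
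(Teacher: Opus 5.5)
Your outline follows the paper's appendix proof step for step: the Proposition~\ref{pro:spiral} lower bound integrated against the quadratic service-level term, the aftermath cube scaled by $(r_{\mathrm{surge}}-\theta_1)/(\theta_1-r_{\mathrm{calm}})$, the mixed offered load $r\bigl[(1-\varphi)\tS_0+\varphi\tS_1\bigr]$ for the unsaturated anticipatory path, lead-window churn as the only differential charge, and solving the resulting cubic for $t_2-\tau$. Your firing check and your explicit energy-matching convention are, if anything, more careful than the paper.

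The problem is that the step producing $\varphi$ cannot coexist with the step producing ignition, and you already had the tool to see it. You used $r_{\mathrm{surge}}>\theta_0\iff r_{\mathrm{surge}}\tS_0>mk$. The same identity $\theta_1\tS_1=mk$ from \eqref{eq:effective} gives $r_{\mathrm{surge}}>\theta_1\iff r_{\mathrm{surge}}\tS_1>mk$, which is the negation of the hypothesis you invoke at $\varphi=1$. With a single $\tS_1$, the resting level obeys $r_{\mathrm{surge}}\bigl[(1-\varphi)\tS_0+\varphi\tS_1\bigr]\ge r_{\mathrm{surge}}\min\{\tS_0,\tS_1\}>mk$ for every $\varphi\in[0,1]$. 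So no mix holds the surge below capacity at rest, and that is exactly what your argument needs; the paper's Step~1 makes the same move. Nor is there bistability to exploit: the recovering drift at $N=mk$ is $r-\mu^{\mathrm{eff}}_j\,mk=r-\theta_j$, the saturated drift, so a tier that ignites when fired in saturation also pushes an unsaturated system up through $mk$ if the surge lasts long enough. Read literally, the proposition is vacuous, and your proof is valid only in that sense.

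The non-vacuous version is the heterogeneous one of Example~\ref{ex:surge}. Degrade only an insensitive class $x$, so feasibility reads $r_{\mathrm{surge}}\bigl[(1-\varphi)\tS_0+\varphi\tS_1(x)\bigr]<mk$, ignition refers to the uniform throttle, and the premium carries $M_1(x)d_1(x)(1-\rho_x)(p_r\ell)_x$ (net of its tier-$0$ counterpart when $d_0>0$, as the paper's Step~3 notes). You would then have to redo the reactive ledger. A uniform throttle over a mixed backlog has saturated drift $r-k\mu_1 m\sum_x\frac{N(x)}{N}\bigl(1-d_1(x)\rho_x\bigr)$, which moves with the backlog mix rather than being the constant of Proposition~\ref{pro:spiral}.

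Second, ``exactly when'' does not follow from what you prove. You have only a lower bound on the reactive excess. Equating it to the premium gives sufficiency ($t_2-\tau>T^{\star}$ implies dominance) and says nothing about $t_2-\tau<T^{\star}$. The converse needs the bound to be attained, which requires three things:
\begin{itemize}
\item the trigger sits at $\wh N_{\mathrm{fire}}=mk$, so there is no service-level cost before $\tau$ and the backlog is exactly $(r_{\mathrm{surge}}-\theta_1)(t-\tau)$;
\item there is no release before the backlog returns to $mk$ (in the trap region an earlier release drains at $\theta_0-r_{\mathrm{calm}}>\theta_1-r_{\mathrm{calm}}$, so your aftermath term is then not even a lower bound);
\item every other differential cost is neutralized by convention, notably the reactive rule's saturated churn at rate $k\mu_1 m\,d_1(1-\rho)\,p_r\ell$ and the memory term.
\end{itemize}
State these alongside your energy-matching assumption, or weaken the conclusion to the one-sided claim.
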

\begin{proof}
Appendix~\ref{app:anticipation}.
\end{proof}

The three results share one mechanism: \textbf{under congestion, throttling is not a cost lever but a demand lever. It manufactures the very traffic it was deployed to shed, and doing it reactively converts a capacity shortage into a self-sustaining one.}


\subsection{Who to Throttle: Customer Heterogeneity}\label{sec:who}

So far every customer felt degradation identically, and real customer bases are not like that. A researcher doing context-heavy work feels a single tier of degradation acutely; a user parsing CSV fields into a table does fine on a smaller model. Index customer classes by $x\in\calx$, a label observed by the router; $\calx$ is a general set; taking $\calx=\{\text{researcher},\text{parser}\}$, two classes each aggregating the many customers whose workload fits the description, gives the smallest instance rich enough for everything this section builds: the index, the nested thresholds, and the dual spread of Section~\ref{sec:shadow} all take their simplest nontrivial form on two classes. Let each primitive of Section~\ref{sec:model} carry the class: the dissatisfaction probability becomes $d_j(x)$, the retry probability $\rho_x$, and the expected LTV loss $(p_r\ell)_x$. Formally, the sensitivity curve $d_j(x)$ is steep in $j$ for the first type and nearly flat for the second, and the retry probability $\rho_x$ and the expected LTV loss $(p_r\ell)_x$ order the same way: the researcher re-asks more and is worth more. The effective service time and the effective throughput of \eqref{eq:effective} become class-dependent, $\tS_j(x):=M_j(x)\,\ex[S_j]$ and $\theta_j(x):=mk/\tS_j(x)$. To feel the asymmetry, take a researcher at $d_1=0.6$, $\rho=0.9$ and a parser at $d_1=0.05$, $\rho=0.3$: the multipliers are $M_1=(1-0.54)^{-1}\approx 2.17$ against $(1-0.015)^{-1}\approx 1.02$. Routing the researcher to tier 1 more than doubles their traffic; routing the parser is nearly free. Uniform throttling, the standard load balancer, treats these two customers identically; we study the tradeoff as follows.

The action is no longer a tier but a \emph{routing}: $\varphi(x,j)$ is the fraction of class-$x$ traffic sent to tier $j$, and $\lambda_x$ is class $x$'s fresh-demand rate, the class decomposition $r_t=\sum_x\lambda_x$ of the forecast, counting first attempts only because $\tS_j(x)$ already carries each attempt's retries. Writing $c(x,j):=\gamma\,\tS_j(x)\,\Delta w_j+M_j(x)\,d_j(x)(1-\rho_x)\,(p_r\ell)_x$ for the cost per satisfied class-$x$ answer at tier $j$, the first term the energy of \eqref{eq:ledger} and the second its destroyed lifetime value, now evaluated at class $x$'s own multiplier and expected LTV loss, the one-period problem is a transportation problem with classes as demand nodes and tiers as warehouses. One precaution decides whether the formulation is right or wrong: the capacity constraint must charge each unit of routed traffic its \emph{retry-inflated} load $\tS_j(x)$, not its posted service time, otherwise the optimization sees tier 1 as cheap capacity and happily recommends the spiral of Example~\ref{ex:surge},
\begin{equation}\label{eq:transport}
\min_{\varphi\ge 0}\;\sum_{x,j} c(x,j)\,\lambda_x\,\varphi(x,j)
\quad\text{s.t.}\quad
\sum_j \varphi(x,j)=1 \;\;\forall x\in\calx,
\qquad
\sum_{x,\,j} \tS_j(x)\,\lambda_x\,\varphi(x,j) \;\le\; mk.
\end{equation}
The fleet is a single pool shared across tiers; dedicated per-tier budgets are the special case in which this constraint is replaced by $\sum_x \tS_j(x)\lambda_x\varphi(x,j) \le mk_j$. The objective sums, over every class and tier, the cost per satisfied answer times the volume routed there: energy plus destroyed lifetime value, dollars per unit time. The first constraint says every class is served somewhere, the fractions across tiers summing to one, so the provider cannot solve congestion by silently dropping a class. The second constraint prices capacity in retry-inflated slot-time: each unit of class-$x$ traffic routed to tier $j$ occupies $\tS_j(x)$ slot-time, the multiplier included, so a class that retries heavily consumes capacity its posted service time conceals, and in the trap region routing it to the weak tier \emph{tightens} the constraint the routing was meant to relax. The linear objective is reconciled with the nonlinear cost \eqref{eq:cost} term by term: the idle floor $\gamma k w_0$ is constant across routings and drops from any argmin; the service-level penalty is not ignored but converted into the capacity constraint, exact rather than approximate because on the feasible set the excess backlog is identically zero, its economics surviving as the constraint's dual $\nu$; and the memory term, convex in occupancy, is linearized at the operating point, its marginal rate $\gamma c_m\kappa N^{\kappa-1}$ per unit slot-time being precisely the discount Remark~\ref{rem:memory} prices, with the omitted curvature only reinforcing the index ordering that follows. The linear program is small, classes times tiers, and solves in microseconds; its value is not the solution but its structure, which the next proposition extracts: at a binding constraint the optimum is a greedy ordering, and the ordering is by a single computable index.

\begin{figure}[t]
\centering
\begin{tikzpicture}[
    >=Stealth,
    class/.style={draw, rounded corners=3pt, minimum width=3cm, minimum height=0.9cm, align=center, font=\small, fill=blue!6},
    tier/.style={draw, rounded corners=3pt, minimum width=3.6cm, minimum height=0.9cm, align=center, font=\small, fill=orange!8},
  ]
  \node[class] (R) at (0, 2) {researcher \quad $\lambda_R$};
  \node[class] (P) at (0, -2) {parser \quad $\lambda_P$};
  \node[tier] (T0) at (7, 2) {tier $0$ (strong) \quad $mk$};
  \node[tier] (T1) at (7, -2) {tier $1$ (weak) \quad $mk$};
  \draw[->, thick] (R.east) -- node[above, font=\scriptsize] {$\lambda_R\,\varphi(R,0)$} (T0.west);
  \draw[->, thick] (P.east) -- node[below, font=\scriptsize] {$\lambda_P\,\varphi(P,1)$} (T1.west);
  \draw[->, thick, densely dotted] (R.east) to[bend right=15] node[sloped, font=\scriptsize, pos=0.22, above=4pt] {$\lambda_R\,\varphi(R,1)$} (T1.west);
  \draw[->, thick, densely dotted] (P.east) to[bend left=15] node[sloped, font=\scriptsize, pos=0.68, below=4pt] {$\lambda_P\,\varphi(P,0)$} (T0.west);
  \node[font=\scriptsize, text=black!50] at (0, 0) {$\textstyle\sum_j \varphi(x,j)=1$};
  \node[font=\scriptsize, text=black!50] at (7, 0) {$\textstyle\sum_{x,j} \tS_j(x)\,\lambda_x\,\varphi(x,j)\;\le\;mk$};
  \node[font=\scriptsize, align=center, text=black!40] at (3.5, -4.2) {edge cost per satisfied answer:\\$c(x,j)=\gamma\,\tS_j(x)\,\Delta w_j+M_j(x)\,d_j(x)(1{-}\rho_x)\,(p_r\ell)_x$};
\end{tikzpicture}
\caption{The transportation LP \eqref{eq:transport} for two classes and two tiers. Solid arrows are the dominant flows in the calibrated instance; dotted arrows are the cross-assignments the LP considers and rejects. Capacity is charged in effective slot-time $\tS_j(x)=M_j(x)\,\ex[S_j]$, not posted $\ex[S_j]$: a heavy-retry class consumes capacity its service time conceals, and in the trap region the capacity constraint \emph{tightens} under degradation.}
\label{fig:transport}
\end{figure}

\begin{pro}[Index rule]\label{pro:index}
At a binding capacity constraint, the optimal routing degrades classes in increasing order of the index
\[
I(x,j) \;=\; \frac{\partial c(x,j)/\partial j}{\;\tS_0(x)-\tS_j(x)\;},
\]
marginal damage per unit of capacity relief, and as the effective capacity tightens, the set of degraded classes is nested: the set degraded at budget $b' < b$ contains the set degraded at $b$.
\end{pro}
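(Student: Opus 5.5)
The plan is to reduce the transportation LP \eqref{eq:transport} to a fractional (continuous) knapsack and then use the greedy structure of that problem. I will work with two tiers first, $j\in\{0,1\}$, and interpret $\partial c(x,j)/\partial j$ as the finite difference $c(x,1)-c(x,0)$. With two tiers the simplex constraint lets me eliminate $\varphi(x,0)=1-\varphi(x,1)$. Writing $u_x:=\lambda_x\varphi(x,1)\in[0,\lambda_x]$ for the degraded volume of class $x$, the objective becomes
\[
\sum_x c(x,0)\lambda_x+\sum_x\bigl(c(x,1)-c(x,0)\bigr)u_x .
\]
The capacity constraint becomes
\[
\sum_x\bigl(\tS_0(x)-\tS_1(x)\bigr)u_x\;\ge\;\sum_x\tS_0(x)\lambda_x-mk=:b .
\]
So the problem is to buy at least $b$ units of capacity relief at minimum damage. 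Each class is an item of size $\lambda_x$ with per-unit relief $a_x:=\tS_0(x)-\tS_1(x)$ and per-unit damage $\delta_x:=c(x,1)-c(x,0)$.

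Before the knapsack argument I need to sort the classes by the sign of $a_x$.
\begin{itemize}
\item Classes in the trap region of Proposition~\ref{pro:trap} have $a_x\le 0$. Degrading them never helps feasibility.
\item If such a class also has $\delta_x\ge 0$, it is never degraded. It sits at the end of the ordering, since its index is negative or undefined.
\item If it has $\delta_x<0$ (cheap energy, low churn), it is degraded only when capacity is slack. This must be set aside, so I restrict the statement to classes with $a_x>0$. I will also note that a class with $\delta_x<0$ and $a_x>0$ is degraded at every budget. Its index is negative, so it comes first in the ordering, which is consistent with the claim.
\end{itemize}

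Among classes with $a_x>0$, I apply the standard exchange argument. Suppose an optimal $u$ has $u_y>0$ while some $x$ with $I(x)=\delta_x/a_x<I(y)$ has $u_x<\lambda_x$. Shifting $\epsilon/a_x$ units to $x$ and $\epsilon/a_y$ units away from $y$ keeps the relief unchanged. It changes the cost by $\epsilon\bigl(I(x)-I(y)\bigr)<0$, which contradicts optimality. Hence the optimum fills classes in increasing $I$ until the relief reaches $b$, with at most one class fractional. The same conclusion can be read from LP duality: with multiplier $\nu\ge 0$ on the relief constraint, the reduced cost of $u_x$ is $\delta_x-\nu a_x$. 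So $u_x=\lambda_x$ when $I(x)<\nu$ and $u_x=0$ when $I(x)>\nu$, and the dual $\nu$ is exactly the threshold index. This also prepares the shadow price of Section~\ref{sec:shadow}.

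Nestedness then follows from monotonicity of the threshold. The required relief $b$ increases as the effective capacity $mk$ decreases; I read ``budget'' as $mk$, so $b'<b$ in budget means more relief is required. By parametric LP, the optimal $\nu$ is nondecreasing in the required relief, because the value function is convex in the right-hand side. The degraded set $\{x: I(x)<\nu\}$ therefore only grows. Ties in the index need a consistent tie-break, which I fix by class label.

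For $J>1$ tiers I would replace each class by its lower convex hull of points $(\text{relief},\text{damage})$ over tiers. The successive hull slopes are the indices $I(x,j)$, read as discrete marginal damage over marginal relief. The greedy/dual argument then applies to the resulting segments as items. The main obstacle is exactly this step: the displayed index uses $\tS_0-\tS_j$ rather than incremental relief, so it agrees with the hull slopes only when the damage–relief profile is convex in $j$. I would state that convexity as the regularity assumption behind ``$\partial c/\partial j$'', and verify it holds automatically for two tiers.
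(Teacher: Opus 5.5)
Your argument is essentially the paper's proof. It puts a multiplier $\nu$ on the retry-inflated capacity constraint, uses a swap (reduced-cost) argument to show that positive-relief classes are degraded exactly when their damage-to-relief index falls below $\nu$, and obtains nestedness from convexity of the LP value in the capacity, which makes $\nu$ nondecreasing as the budget tightens.

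Your caveats are sharper than the appendix, which simply asserts that nonpositive-relief classes are never degraded at a binding constraint. In fact a class with $a_x<0$ and $\delta_x<0$ is degraded whenever $\nu<\delta_x/a_x$, binding constraint or not (not only when capacity is slack, as you wrote), and it drops out of the degraded set as $\nu$ rises. So your restriction to $a_x>0$ is genuinely needed for nestedness. For $J>1$, note that convexity of the damage--relief profile does not reconcile the cumulative denominator $\tS_0(x)-\tS_j(x)$ with the hull slopes. Your hull argument therefore proves the rule with incremental relief $\tS_{j-1}(x)-\tS_j(x)$, which coincides with the displayed index only at $j=1$.
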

\begin{proof}
Appendix~\ref{app:index}. The index is a relative of the $c\mu$ rule \parencite{VanMieghem1995}, with the denominator written in effective slot-time: degrading a heavy-retry class buys less capacity than its service-time discount suggests, because part of the freed capacity is immediately reclaimed by that class's own retries, and in the trap region of Proposition~\ref{pro:trap} the denominator goes negative, pricing the class out of degradation entirely when capacity binds.
\end{proof}

\textbf{Assumption (N).} In the dynamic problem, the optimal degraded set is nondecreasing in the congestion state $N$; it holds on every instance of Section~\ref{sec:poc} and is used in Proposition~\ref{pro:tree}.

\begin{rem}\label{rem:indexplain}
When the fleet is congested, every slot of capacity freed by degrading someone must be paid for in degraded service, and the exchange rate differs by customer. The index says: degrade the customers for whom that swap is cheapest first: the ones whose retries eat the least of the capacity just freed, and whose dissatisfaction and churn cost the least. Heavy-retry, high-value customers are priced out of degradation entirely when capacity binds, because degrading them buys less relief than it costs in damage.
\end{rem}

\begin{pro}[Dominance]\label{pro:dominance}
Targeted degradation weakly dominates uniform degradation for any sensitivity profile, and strictly whenever profiles differ across classes.
\end{pro}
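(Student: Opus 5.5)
The plan is to read ``uniform degradation'' as the restriction of the transportation problem \eqref{eq:transport} to routings that do not depend on the class. Such a routing is $\varphi(x,j)=\varphi_j$ for every $x\in\calx$, with $\sum_j\varphi_j=1$. Every uniform routing that satisfies the retry-inflated capacity constraint is also feasible for \eqref{eq:transport}, because the constraints are identical and the uniform routing simply satisfies them with equal rows. So the uniform policy optimizes over a subset of the targeted feasible set, the objective is the same linear function, and weak dominance is immediate. The value of \eqref{eq:transport} is at most the value of the best uniform policy, for every sensitivity profile $\{d_j(x),\rho_x,(p_r\ell)_x\}$. The same containment also covers infeasibility. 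If no uniform routing fits under $mk$, as in Example~\ref{ex:surge}, then the uniform value is $+\infty$ and the inequality holds trivially.

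For strict dominance I will start from an optimal uniform routing $\bar\varphi$ and construct a feasible class-dependent perturbation that strictly lowers cost. The argument splits on whether capacity binds at $\bar\varphi$.

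If capacity is slack, the constraint plays no role locally. Each class's row can then be moved toward its own cheapest tier, $\arg\min_j c(x,j)$. Whenever the profiles differ so that these minimizers differ across classes, no single common row $\varphi_j$ is optimal for every class, and the move strictly reduces cost while staying feasible for small steps.

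If capacity binds, I will use an exchange between two classes $x\neq y$ and two tiers. Class $x$ moves mass $\epsilon/\lambda_x$ from tier $0$ to tier $j$, and class $y$ moves mass $\delta/\lambda_y$ from tier $j$ back to tier $0$. The amounts are chosen so that the slot-time change is zero:
\[
\epsilon\,\bigl(\tS_0(x)-\tS_j(x)\bigr)=\delta\,\bigl(\tS_0(y)-\tS_j(y)\bigr).
\]
The cost then changes by $\epsilon\,[c(x,j)-c(x,0)]-\delta\,[c(y,j)-c(y,0)]$. Normalized per unit of capacity relief, this is the difference of the discrete indices of Proposition~\ref{pro:index}. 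It is therefore strictly negative whenever $I(x,j)\neq I(y,j)$: degrade the lower-index class and restore the higher-index one. Uniform routing gives both classes the same interior fractions, so small moves in both directions are admissible and feasibility is preserved.

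The main obstacle is the meaning of ``strictly whenever profiles differ''. Profiles can differ, for instance with $d_j(x)\ne d_j(y)$, yet yield equal indices or equal multipliers. Corner cases also matter: $\bar\varphi$ may put all mass on tier $0$, so that no class can be restored, or the denominator $\tS_0(x)-\tS_j(x)$ may be negative, as in the trap region of Proposition~\ref{pro:trap}. My first step is to fix the genericity condition the statement needs: differing profiles must induce distinct indices, or distinct cheapest tiers when capacity is slack. At a uniform corner I will use a one-sided exchange that degrades only the lowest-index class. If that class has a positive denominator, this frees capacity, and the freed capacity can be reassigned to strengthen the others. If every degradation has a negative denominator, degrading anyone tightens the constraint, so I would show that uniform and targeted policies coincide at the all-tier-$0$ corner and are equal in value. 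That makes the strictness claim hold under the stated nondegeneracy, and it is in that precise form that I would state it.
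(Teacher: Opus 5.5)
Your weak-dominance step matches the paper's: a class-constant routing is feasible for \eqref{eq:transport}, so the targeted optimum can only be lower. Your $+\infty$ convention for an infeasible uniform class is a harmless addition.

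For strictness the routes differ. The paper argues in one line: at equal energy savings, uniform throttling books churn at the population-average expected LTV loss, targeted throttling books it at the class minimum for the same capacity relief, and an average exceeds a minimum when profiles differ. You instead perturb the optimal uniform routing $\bar\varphi$. When capacity is slack you move each class toward its own cheapest tier. When it binds you use a two-class exchange with zero net retry-inflated slot-time, so the cost change per unit of relief is a difference of discrete indices. That is the rigorous form of the paper's averaging intuition, and it exposes what the one-liner hides. The averaging argument needs the uniform optimum to degrade a positive volume with room left to reallocate it. You are right that ``profiles differ'' alone does not give strictness. It fails on an open set of instances, e.g.\ slack capacity with $c(x,0)<c(x,j)$ for every class and every $j\ge 1$: both optima serve everyone at tier $0$ although the $d_j(x)$ differ. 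The paper's version buys brevity and the economic reading of the gap as excess churn; yours buys the correct hypothesis.

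One step of yours needs repair. At a single-tier corner such as all-tier-$0$ with capacity tight, nobody can be strengthened, so reassigning the freed capacity does nothing. If every degradation raises cost, all-tier-$0$ is optimal for \eqref{eq:transport} whatever the signs of $\tS_0(x)-\tS_j(x)$, even with distinct indices. So ``distinct indices'' suffices only when $\bar\varphi$ mixes two tiers, and your equality case (all denominators negative) is too narrow. The case split should be by the multiplier, not by tightness.

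The clean statement is LP duality, the same condition behind the paper's $\pi_x=\min_j\{c(x,j)+\nu\,\tS_j(x)\}$. The routing $\bar\varphi$ is optimal for \eqref{eq:transport} if and only if some $\nu\ge 0$, vanishing unless capacity is tight, puts the common support of $\bar\varphi$ inside $\arg\min_j\{c(x,j)+\nu\,\tS_j(x)\}$ for every class $x$. Strict dominance holds exactly when no such $\nu$ exists. With a two-tier support $\{j_1,j_2\}$ (which need not contain tier $0$), no such $\nu$ exists once the discrete indices between $j_1$ and $j_2$ differ across classes, which is your condition. Since both directions are admissible at an interior point, mixed-sign denominators need no special orientation. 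With a single-tier support the condition is the open one described above.
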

\begin{proof}
Any uniform policy is feasible for \eqref{eq:transport} with $\varphi(x,j)$ constant in $x$, so the optimum weakly improves on it. For strictness: at equal energy savings, uniform throttling books churn at the population-average expected LTV loss while the targeted policy books it at the minimum over classes achieving the same capacity relief, and when profiles differ the average strictly exceeds the minimum.
\end{proof}

The uniform policy is the LP optimum restricted to routings of the form $\varphi(x,j)=\varphi(j)$, the same tier mix for every class; its gap to the true optimum is the excess churn it books at the population-average LTV loss rather than the class-specific minimum, positive exactly when sensitivity profiles differ. That gap is the entire value of knowing who is who.

\subsection{The Shadow Price of Intelligence}\label{sec:shadow}

The capacity constraint's dual, the Lagrange multiplier on the retry-inflated slot-time budget of \eqref{eq:transport}, is the number the paper is named after: it is the marginal cost of one additional unit of effective load, the price the system assigns to intelligence at the binding tier.

In the static problem each demand-satisfaction constraint carries a dual variable $\pi_x$, and in the dynamic problem the corresponding object is a derivative through the value function,
\begin{equation}\label{eq:shadow}
\pi_x(t) \;:=\; \frac{\partial V^{\star}_t}{\partial \lambda_x(t)},
\end{equation}
the system's marginal cost of one more class-$x$ query at hour $t$. The value function behind \eqref{eq:shadow} is the dynamic version of the transportation problem \eqref{eq:transport}. The state is the class-indexed backlog $\bN_s=(N_s(x))_{x\in\calx}$ with total $N_s:=\sum_x N_s(x)$; an action is an assignment $a_s:\calx\to\{0,\dots,J\}$ of a tier to each class, and $\cala$ denotes the admissible assignment sequences $(a_t,\dots,a_T)$. Write $n_s(x)$ for the class-$x$ jobs \emph{in service}, all of them below capacity and the proportional share above it, $n_s(x):=N_s(x)\min\{1,\,mk/N_s\}$. The stage cost is the operating cost \eqref{eq:cost} with its active-draw term opened up by class, the multi-tier extension deferred in Section~\ref{sec:model},
\[
c\bigl(k,\bN_s,a_s\bigr) \;:=\; \gamma\Bigl[k\,w_0+\sum_{x\in\calx} n_s(x)\,\Delta w_{a_s(x)}\Bigr] \;+\;\gamma\,c_m N_s^{\kappa} \;+\;C_{\mathrm{SLA}}\!\left(\frac{(N_s-mk)^{+}}{mk}\right)^{\!\beta},
\]
which reduces to \eqref{eq:cost} when every class shares one tier. Then
\begin{equation}\label{eq:value}
\begin{aligned}
V^{\star}_t \;=\; \min_{a(\cdot)\in\cala}\; &\sum_{s=t}^{T}\Bigl[\,c\bigl(k,\bN_s,a_s\bigr) \;+\;\sum_{x\in\calx} d_{a_s(x)}(x)\,(1{-}\rho_x)\,\mu_{a_s(x)}\,n_s(x)\,(p_r\ell)_x\Bigr] \\
\text{s.t.}\quad &N_{s+1}(x) \;=\; \mathcal{F}\bigl(N_s(x),\,\lambda_x(s),\,a_s(x)\bigr) \quad\forall x\in\calx,\\
&\pr\bigl(W_s>\bar s\bigr)\;\le\;\alpha,
\end{aligned}
\end{equation}
with $\mathcal{F}$ the two-regime recursion \eqref{eq:fluid} applied class by class, each class carrying its own effective quantities, dissatisfaction $d_{a_s(x)}(x)$, retry probability $\rho_x$, and its capacity share as the regime boundary. The second sum is the churn ledger: class-$x$ completions depart at rate $\mu_{a_s(x)}\,n_s(x)$, each an attempt that fails with probability $d_{a_s(x)}(x)$ and abandons with probability $1-\rho_x$, booking the expected LTV loss $(p_r\ell)_x$. The response-time constraint is a chance constraint at threshold $\bar s$, enforced by pruning every state--action pair that violates the normal surrogate of Proposition~\ref{pro:anticipation}; crucially, it permits transient saturation and charges for it through $C_{\mathrm{SLA}}$, rather than forbidding the saturated regime the analysis is about. The fresh-demand rate $\lambda_x(s)$ enters only the class-$x$ dynamics, so the derivative \eqref{eq:shadow} is well defined class by class; with a single class the assignment collapses to a tier-switching sequence and \eqref{eq:value} is the problem of Section~\ref{sec:dynamics} verbatim. The forward pass makes $\pi_x(t)$ computable at every level: closed form at the newsvendor level, obtained by differentiating \eqref{eq:newsvendor} through the chance constraint; the LP dual of \eqref{eq:transport} in the static problem; and, in the dynamic model, a finite difference through the deterministic forward pass, with no Monte Carlo jitter. In the static problem \eqref{eq:transport} the demand-constraint dual satisfies
\[
\pi_x \;=\; \min_j \bigl\{\, c(x,j) + \nu\,\tS_j(x) \,\bigr\},
\]
a class-specific floor $c(x,j^{\star}(x))$ plus a scarcity rent $\nu\,\tS_{j^{\star}(x)}(x)$ that is nearly common across classes because $\nu$ is shared and the $\tS$'s are of similar order. Because the forward pass is closed form between regime switches, the dynamic dual recomputes in milliseconds, making it a live control signal rather than an overnight batch job (Section~\ref{sec:poc} reports the measured cost).

The structure of $\pi_x(t)$ answers the posting quoted in the introduction, in two parts. First, the class duals differ at \emph{every} hour, not only at the crunch: even with the capacity constraint slack, a marginal specialized query costs its strong tier's multiplied compute and churn, $M_j(x)\bigl(c_j + d_j(x)(1{-}\rho_x)(p_r\ell)_x\bigr)$, while a marginal casual query costs its cheap tier's, so the flat relative price of one that any uniform policy implicitly charges is wrong around the clock. Second, when capacity binds, the shared pool adds a scarcity rent that is nearly common across classes, because congestion anywhere reprices capacity everywhere: dollar differences between the class duals widen at the crunch while their \emph{ratios} compress toward one. The distance the posting names between allocation choices and user outcomes is therefore a two-part number, a floor spread that never closes and a rent that arrives for everyone at once, and both parts are now computable, monitorable, and optimizable against; Section~\ref{sec:poc-shadow} computes them on the calibrated instances.

\section{From Fluid to Practice}\label{sec:practice}

Four questions separate the analysis from a deployment: how to solve the dynamic program fast enough to matter, what the fluid approximation discards and whether it matters, how to make the optimal policy operable by a human, and whether the one primitive that is not directly observable can be identified from data. Each has a short answer.

\subsection{Solving It: the Complexity Dividend of the Closed Form}\label{sec:dp}

The value function \eqref{eq:value} is solved by backward induction: $T$ epochs, a grid of $g$ points per class backlog with multilinear interpolation, and the admissible assignments $\cala_1$ surviving the static ledger \eqref{eq:ledger}, which prunes the menu before the dynamics begin (Section~\ref{sec:poc-instances} reports collapses as severe as $2744\to 72$). Any solver of this type evaluates the same count of transitions,
\[
T \;\times\; |\cala_1| \;\times\; g^{|\calx|},
\]
so the cost difference between solvers lives entirely in the price of one transition, and that price is the point of Lemma~\ref{lem:fluid}.

Without the lemma, the natural transition oracle is the drift itself: the backlog changes at rate $r^{\mathrm{eff}}_t$ minus the completion rate, and one steps this forward, either as the Markov chain or as its mean ODE. Any such explicit scheme must resolve the fastest relaxation in the dynamics, the below-capacity decay at rate $(1-d_j\rho)\mu_j$: stability and accuracy require steps $\delta$ with $(1-d_j\rho)\mu_j\,\delta\le c$, $c\approx 0.2$ in practice, hence $(\max_j\,\mu^{\mathrm{eff}}_j)\,\Delta/c \;\le\; \mu_{\max}\Delta/c$ substeps per epoch of length $\Delta$. The service rate sets the integrator's clock, and the fast, cheap tiers that make degradation attractive are exactly the tiers that make integrating it expensive. Lemma~\ref{lem:fluid} is the escape: because the drift is piecewise linear in the state, the ODE solves in closed form on each side of the capacity boundary, the crossing time is itself a formula, and within a leg the trajectory crosses at most once, so one transition costs a constant handful of elementary-function evaluations regardless of $\mu_{\max}$ or $\Delta$. Per transition:
\[
\underbrace{\Theta\!\bigl(\mu_{\max}\,\Delta\bigr)}_{\text{Euler}}
\qquad\text{against}\qquad
\underbrace{\Theta(1)}_{\text{closed form}},
\]
a speedup linear in the service-rate scale. This is the pattern of Table~\ref{tab:poc-cert}: the measured factors grow with $\mu_{\max}$ and peak on the fastest ladder, sitting below the raw substep ratio only because interpolation and cost evaluation are overhead common to both solvers. The dividend survives the rate correction verbatim: one exponential and one logarithm per leg at $\Theta(1)$, independent of the rate scale. Near-ignition tiers, for which $\mu^{\mathrm{eff}}_j\ll\mu_j$, make Euler cheaper while the closed form's price is unchanged, so the measured speedups in Table~\ref{tab:poc-cert} can only shift down slightly, not in kind. Figure~\ref{fig:dp-cost} draws one transition under each solver.
\begin{figure}[t]
\centering
\begin{tikzpicture}[font=\scriptsize, >={Stealth}]
\foreach \xshift/\plabel/\ptitle in {0/(a)/{converged Euler},
                                     5.4/(b)/{closed form (Lemma~\ref{lem:fluid})}}{
\begin{scope}[shift={(\xshift,0)}]
  \fill[blue!10]  (0.9,0)  rectangle (1.35,2.25);
  \fill[green!8]  (1.35,0) rectangle (1.8,2.25);
  \draw[black!25] (0,0) grid[step=0.45] (2.7,2.25);
  \draw[->, black!60] (2.7,2.55) -- (0,2.55);
  \node[above, black!60] at (1.35,2.55) {backward sweep, $T$ epochs};
  \node[below] at (1.125,-0.02) {$s$};
  \node[below] at (1.575,-0.02) {$s{+}1$};
  \node[rotate=90] at (-0.3,1.125) {backlog grid, $g^{|\calx|}$};
  \draw[->, very thick, red!70!black] (1.125,1.575) -- (1.575,0.675);
  \draw[dashed, black!40] (1.125,1.575) -- (0.05,-0.55);
  \draw[dashed, black!40] (1.575,0.675) -- (2.75,-0.55);
  \draw[black!50, rounded corners=2pt] (0.05,-3.1) rectangle (2.75,-0.55);
  \begin{scope}[shift={(0,-2.95)}]
    \draw[->, black!60] (0.2,0.12) -- (2.6,0.12);
    \node[below, inner sep=2pt] at (1.4,0.12) {$t\ \to\ t+\Delta$};
    \draw[dashed, black!50] (0.2,1.0) -- (2.35,1.0)
          node[right, black!60, inner sep=1pt] {$mk$};
  \end{scope}
  \node at (1.4,-3.45) {\plabel\ \ptitle};
\end{scope}
}
\begin{scope}[shift={(0,-2.95)}]
  \draw[black!20] (0.3,1.9) -- (1.06,1.0)
        plot[domain=1.06:2.35, samples=30] (\x,{0.45+0.55*exp(-3.3*(\x-1.06))});
  \foreach \i in {0,...,5}{
    \pgfmathsetmacro\px{0.3+\i*0.152}
    \pgfmathsetmacro\py{1.9-\i*0.18}
    \fill[red!70!black] (\px,\py) circle (1.1pt);}
  \foreach \i in {6,...,14}{
    \pgfmathsetmacro\px{0.3+\i*0.146}
    \pgfmathsetmacro\py{0.45+0.55*exp(-3.3*(\px-1.06))}
    \fill[red!70!black] (\px,\py) circle (1.1pt);}
  \node[red!70!black, anchor=west, align=left] at (1.3,1.75)
    {$(\max_j\mu^{\mathrm{eff}}_j)\Delta/c$ steps,\\ one evaluation each};
\end{scope}
\begin{scope}[shift={(5.4,-2.95)}]
  \draw[very thick, blue!55!black] (0.3,1.9) -- (1.06,1.0);
  \draw[very thick, blue!55!black]
        plot[domain=1.06:2.35, samples=30] (\x,{0.45+0.55*exp(-3.3*(\x-1.06))});
  \fill[blue!55!black] (1.06,1.0) circle (1.6pt);
  \node[blue!55!black, below left, inner sep=1.5pt] at (1.1,0.97) {$t^{\star}$};
  \node[blue!55!black, anchor=west, align=left] at (1.3,1.75)
    {two formulas,\\ one crossing time};
\end{scope}
\end{tikzpicture}
\caption{One transition of the backward induction, magnified. The induction fills the same $T\times|\cala_1|\times g^{|\calx|}$ value surface under either solver; the panels differ inside the transition arrow. (a) An explicit integrator resolves the recovering branch's relaxation at rate $\mu^{\mathrm{eff}}_j$, so each transition takes $(\max_j\mu^{\mathrm{eff}}_j)\Delta/c$ substeps. (b) Lemma~\ref{lem:fluid} gives the same trajectory as a linear leg and an exponential leg joined at the closed-form crossing time $t^{\star}$: constant cost per transition, independent of the service-rate scale.}
\label{fig:dp-cost}
\end{figure}

The device is worth locating in the queueing literature, because it occupies a gap. The standard responses to time-varying load sit at two poles. At one, stationary formulas are applied pointwise, hour by hour, which is blind to precisely the object this paper controls: how fast the backlog moves between regimes, the transient on which the ignition threshold and the latch live. At the other, exact transient analysis, the fluid and strong-approximation limits for time-varying many-server systems \parencite{MandelbaumMassey1995, Whitt2002} and, further out, the distribution-dependent formulations of \textcite{DiekerHackmanWangYan2026}, describes the trajectory faithfully but as an object to be integrated, at the integrator's price derived above. The two-regime recursion of Lemma~\ref{lem:fluid} sits between the poles: transient, yet a formula, a \emph{transient Little's law} that relaxes to the stationary prescription as the step grows and is exact along the way. Its ingredients are classical, the recovering branch in particular being the transient infinite-server decomposition of \textcite{EickMasseyWhitt1993}; what we have not found in the literature is the pieced two-regime trajectory itself, crossing times included and arrivals made endogenous, nor its deployment as a constant-cost transition oracle inside an optimization loop, and that deployment is what turns transient control from a simulation study into arithmetic.

The line between exact and approximate deserves stating precisely. The single-class dynamics of \eqref{eq:fluid} requires no time discretization at all: between changes of the arrival rate or the tier, the trajectory is a formula, and the only event is the capacity crossing, whose time is itself closed form. Discretization enters solely through the multi-class extension, where a shared pool makes each class's capacity a function of every class's current backlog; the solver holds these shares frozen within a leg, re-resolving at the leg boundaries, and the error is first-order in the leg length. It concentrates in deep saturation with unbalanced backlogs, off the optimal trajectory, which is where the $Q$-regret tails of Section~\ref{sec:poc-cert} sit while the policy-value certificate stays within single digits of percent on four of the five instances, and at twelve percent on the fourteen-tier lattice, where the frozen-share step is busiest.

The dividend compounds at the dual: after the backward pass, a forward pass costs $T$ transitions, so the finite difference \eqref{eq:shadow} prices the full daily surface $\pi_x(t)$ in $O(|\calx|\,T^2)$ transitions of closed-form arithmetic, no Monte Carlo, no averaging. The solve runs in seconds; repricing against a revised forecast, the input that actually changes intraday, runs in milliseconds. That is the difference between a shadow price that is a quarterly slide and one that is a live control signal. Stepping back, the tractability is not one device but four multiplying: memorylessness collapses the state to a backlog per class, the static ledger \eqref{eq:ledger} collapses the action space before the dynamics begin, the lemma collapses the transition to constant cost, and the normal surrogate collapses the tail constraint to a pruning rule. Remove any one and the problem returns to overnight simulation.

\subsection{The Role of the Variance}\label{sec:variance}

The fluid model \eqref{eq:fluid} buys its tractability, closed-form trajectories whose evaluation over a horizon costs one elementary-function call per regime switch, by discarding fluctuations, and an honest account of what that purchase costs has two faces. Where the model is safe: Propositions~\ref{pro:trap}, \ref{pro:index}, and~\ref{pro:dominance} are statements about orderings and regions, not levels, so noise moves their boundaries by $O(\sigma)$ without reordering unless the compared quantities are nearly equal, in which case either choice is near-optimal and the error is self-limiting; the saturated regime is drift-dominated, and the churn ledger aggregates over many customers and obeys the law of large numbers. Where it is not: the tail. The service-level constraint $\pr(W>s)\le\alpha$ is a statement about fluctuations, and a fluid path can only answer zero or one; worse, the economics of capacity puts the operating point exactly where this matters, since idle servers are pure cost it makes economic sense to run a system that is barely stable, so the provider lives in the critical band where the normal surrogate is a central limit theorem applied precisely where it is least reliable.

\begin{figure}[t]
\centering
\begin{tikzpicture}
\begin{axis}[
  width=0.48\textwidth, height=4.8cm,
  axis lines=left,
  xlabel={dissatisfaction probability $d_j$},
  ylabel={$\pr$(ignition)},
  xmin=0.32, xmax=0.52,
  ymin=-0.05, ymax=1.12,
  xtick={0.35, 0.40, 0.45, 0.50},
  ytick={0, 0.5, 1},
  font=\scriptsize,
  legend style={draw=none, font=\scriptsize, at={(0.98,0.06)}, anchor=south east},
  clip mode=individual,
]
\fill[red!18] (axis cs:0.395,0) rectangle (axis cs:0.44,1);
\addplot[black, dashed, thick] coordinates {(0.32,0) (0.4167,0) (0.4167,1) (0.52,1)};
\addplot[blue!55!black, very thick, samples=300, domain=0.32:0.52] {1/(1+exp(-220*(x-0.4167)))};
\draw[black, dotted] (axis cs:0.4167,0) -- (axis cs:0.4167,1);
\node[font=\scriptsize, anchor=south] at (axis cs:0.4167, 1.04) {$d^{\star}$};
\node[font=\scriptsize, red!60!black, align=center, anchor=east] at (axis cs:0.388, 0.50) {fluid and\\stochastic disagree};
\draw[-{Stealth}, red!60!black, thin] (axis cs:0.390,0.50) -- (axis cs:0.394,0.50);
\legend{fluid, stochastic (diffusion, schematic)}
\end{axis}
\end{tikzpicture}
\caption{The ignition boundary and its stochastic width (schematic). The fluid model (dashed) predicts a sharp step at $d^{\star}$: below, stable; above, ignited. The stochastic system (schematic) transitions over a band of finite width. The shaded region marks where the two verdicts disagree: the fluid gives a binary answer while the exact ignition probability lies in the middle, and a fluctuation can push the system into the spiral from which the fluid dynamics say there is no return.}
\label{fig:ignition}
\end{figure}

And the mechanism manufactures its own variance. The retry cascade is a branching process, so the effective arrival stream is overdispersed, and its variance blows up as $d_j\rho$ approaches the threshold of Proposition~\ref{pro:spiral}. Explicitly: the attempt count $A$ behind one satisfied answer is geometric with $\ex[A] = M_j$ and $\var(A) = q_j M_j^2$, $q_j := d_j\rho$, so the offered slot-time per fresh arrival, $W = \sum_{i=1}^{A} S_{j,i}$, is a compound sum with $\var(W) = \ex[A]\,\var(S_j) + \var(A)\,\ex[S_j]^2$; for exponential services this is exactly $\var(W) = \tS_j^{\,2}$, an $M_j^2$ inflation over the posted $\ex[S_j]^2$ against the $M_j$ inflation of the mean. In the same way a supply network propagates variability through a multiplier of its own, distinct from and larger than the mean multiplier \parencite{LeePadmanabhanWhang1997, ChenDreznerRyanSimchiLevi2000}. In heavy traffic the stationary occupancy fluctuates on the $\sqrt{mk}$ scale, so the ignition boundary has width $O(1/\sqrt{mk})$ in the load parameter — square-root scaling applied to a boundary. The consequence is that the deterministic threshold is really an \emph{ignition boundary}: below it the fluid system is stable, but a fluctuation can kick the stochastic system across, into a spiral from which the fluid dynamics correctly say there is no return.

The repair lies in one of the deepest and well-established concepts in operations research. Means decide who, in what order, and when; variance decides the buffer. Every threshold in the policy, the throttle trigger, the forbidden region, the tree boundaries of the next subsection, is pulled inside its fluid location by $\Phi^{-1}(1-\alpha)\sqrt{\text{scale}}$: square-root safety staffing, which is to say the newsvendor buffer of \eqref{eq:newsvendor} applied to a boundary instead of a quantity. And the asymmetry is an interesting result in itself: noise \emph{punishes} the reactive policy, which parks the system flush against the ignition boundary, and \emph{rewards} the anticipatory one, which buys distance from it. Stochasticity strengthens the thesis it might have been expected to erode. Section~\ref{sec:poc} verifies both halves of this account on the calibrated instances: the fluid verdict is essentially exact outside a narrow critical band, the ignition transition is consistent with the $1/\sqrt{mk}$ scaling, and the buffered threshold becomes a number rather than a sentiment.

With delayed retries the arrival rate becomes a functional of the current population state: the dynamics are distribution-dependent, a nonlinear Markov chain in exactly the sense of \textcite{DiekerHackmanWangYan2026}, and it is the principled extension of this work. The fluid model of this paper is a first-order approximation: it locates the trap, the spiral, and the index rule; the regime where the stakes concentrate, exact tail certification in the critical band, heavy-tailed token distributions, retries arriving with delay, is left for future work. 

\subsection{The Policy as a Tree}\label{sec:tree}

The optimal policy of Section~\ref{sec:analysis} is a heatmap over $(t,N,x)$ that no operations engineer will certify, diff, or roll back. The industry's actual policy, degrade everyone once $N>\wh N$, is a depth-one stump, constant in $x$. The territory between the stump and the heatmap is a decision tree, and here it comes cheap, because the situation is \emph{distillation, not learning}: tree methods for Markov decision processes interleave splitting and optimization because the policy is unknown during construction \parencite{SanabriaYaoLam2021}, whereas here the teacher is already solved. The procedure has four steps. First, run the fluid dynamic program and extract the optimal actions, the action values $Q(t,N,x,a)$, and the occupancy measure $\mu(t,N)$ under the optimal policy, one backward and one forward pass with closed-form dynamics. Second, fit the tree to minimize the \emph{occupancy-weighted value regret}
\begin{equation}\label{eq:regret}
\sum_{t,N,x} \mu(t,N)\,\bigl[Q(t,N,x,a^{\star})-Q(t,N,x,\mathrm{tree}(t,N,x))\bigr],
\end{equation}
so that where $Q$ is flat across actions the tree may be wrong for free and where it is steep, near the ignition boundary, the splitter concentrates resolution, coarse where the constraint is slack and fine where it bites. Third, close the loop: deploying the tree changes the occupancy and the retry feedback again, so roll \eqref{eq:fluid} forward under the tree, refit on the induced occupancy, and repeat until the splits stabilize; every candidate ships with an exact certificate of leaf count, value gap, and violation path. Fourth, hard-code the buffered forbidden region of Section~\ref{sec:variance}: any leaf whose cell intersects it takes the safe action, so the chance constraint holds by architecture rather than by hope.

\begin{pro}[Canonical form]\label{pro:tree}
Under the nested-threshold structure of Proposition~\ref{pro:index} and Assumption~(N), the optimal policy admits an $\epsilon$-exact tree with $O(|\calx|\times\#\text{regimes}\times J)$ leaves, and the distillation above recovers it. The tree is not a compression of the policy; it is the policy's normal form.
\end{pro}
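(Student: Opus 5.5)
The plan is constructive: exhibit the tree explicitly from the threshold structure, count its leaves, and then argue that the regret-weighted distillation of \eqref{eq:regret} cannot stop at anything worse.

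\emph{Step one: the optimal action has a threshold form.} Fix an epoch $t$. By Proposition~\ref{pro:index} the classes are ordered by the index $I(x,j)$, and for each tier $j$ the set of classes degraded to tier $j$ grows in nested fashion as the effective budget tightens. Assumption~(N) turns tightening into a statement about the state: the optimal degraded set is nondecreasing in $N$. Combining the two, for each pair $(x,j)$ there is a single threshold $N_{x,j}(t)$ such that class $x$ is served at tier $j$ or worse exactly when $N\ge N_{x,j}(t)$. The optimal action $a^\star(t,N,x)$ is therefore a step function in $N$, with at most $J$ steps for each $x$.

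\emph{Step two: dependence on $t$ is piecewise constant up to $\epsilon$.} Lemma~\ref{lem:fluid} gives dynamics that are affine within each regime (saturated or recovering), and the arrival rates $\lambda_x(t)$ are piecewise constant over the forecast epochs. Within each such regime-and-forecast block the thresholds $N_{x,j}(t)$ therefore move continuously. Replacing each by a constant within a block changes the action only on a band of width $\delta$ around the threshold. By the regret argument of \eqref{eq:regret}, the loss on that band is bounded by $\mu$-mass times the $Q$-gap, and the $Q$-gap vanishes at the threshold itself, where the two actions tie. Choosing $\delta$ small therefore gives $\epsilon$-exactness.

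\emph{Step three: assemble and count the tree.} The tree splits first on $x$ ($|\calx|$ branches), then on the regime block (\#regimes branches), then runs a chain of at most $J$ threshold splits on $N$. This gives $O(|\calx|\times\#\text{regimes}\times J)$ leaves.

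\emph{Step four: the distillation recovers this tree.} I would argue by a fixed-point and monotonicity argument. This tree attains regret at most $\epsilon$ within its size class. The refit step of the loop uses the occupancy induced by the current tree, and an $\epsilon$-exact policy induces an occupancy within $O(\epsilon)$ of the optimal $\mu(t,N)$. The splitter's greedy regret reduction then places its splits at the same $N_{x,j}$ locations, so the loop stabilizes there.

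The main obstacle is step two together with the recovery claim. Two things can go wrong: the thresholds may move with $t$ inside a regime, and greedy tree induction is not guaranteed to find the regret minimizer. On the first point, I would use the closed-form trajectories to show each $N_{x,j}(t)$ is monotone within a block. I would then either absorb its variation into $\epsilon$ or add one split on $t$ per block, which leaves the leaf order unchanged. On the second point, I would only claim recovery in the sense that the optimal tree is a fixed point of the refit loop. I would lean on the fact that splits along a single coordinate with nested structure are exactly what a greedy splitter finds. I would also note that the paper's "$\epsilon$-exact" wording gives the slack needed to make this rigorous only up to $\epsilon$.
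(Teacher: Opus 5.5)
Your Steps one and three match the paper's construction. The appendix reads off from Proposition~\ref{pro:index} that the action at $(t,N,x)$ depends on three things: the regime of $t$ (through the arrival rate entering $\nu$), the class's place in the index ordering, and where $N$ sits relative to the at most $J$ points at which $\nu$ crosses class $x$'s successive indices. A tree splitting on regime, class and those thresholds reproduces this map exactly; the order of the first two splits does not matter.

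\textbf{The first gap is in Step two.} You use $\epsilon$ to absorb the drift of $N_{x,j}(t)$ inside a block, but the band width $\delta$ is not a free parameter. It is the range the threshold sweeps over the block, and the instance fixes it. The $Q$-gap vanishes only at the threshold itself, so on a band of given width you have no bound. Shrinking $\delta$ requires further splits on $t$, which breaks the leaf count. One extra $t$-split per block does not rescue it, since a monotone threshold is not constant on two pieces.

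Your blocks are also ill-defined. The saturated/recovering regimes of Lemma~\ref{lem:fluid} partition $N$, not $t$. With hourly epochs $t$ is discrete, so ``moves continuously'' has no content.

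The paper never approximates at this point. Time enters only through the calm/surge regime, and the tree is exact with at most $\#\text{regimes}\times|\calx|\times J$ leaves. The paper uses $\epsilon$ in the opposite direction: it is the slack that permits merging adjacent cells whose occupancy-weighted $Q$-gap is below $\epsilon$, giving fewer leaves. If you want to avoid the paper's implicit assumption that thresholds are constant within a regime, define the regimes as the maximal $t$-cells on which the thresholds are constant (at most $T$ of them). That makes Step two exact rather than approximate.

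\textbf{The second gap is in Step four.} Your claim that an $\epsilon$-exact policy induces an occupancy within $O(\epsilon)$ of $\mu$ does not hold. $\epsilon$-exactness bounds value regret and explicitly allows different actions where $Q$ is flat, and different actions steer different trajectories. Near the ignition threshold of Proposition~\ref{pro:spiral}, a small change of action flips the sign of the drift, so occupancy is not continuous in the policy. Nor does anything force a greedy splitter to cut at the $N_{x,j}$: the regret-weighted loss does not identify cut locations where $Q$ is flat. Indeed, the paper's own distilled trees cut on $\hat r_t$ and on other classes' backlogs.

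The paper's recovery argument is of a different kind and needs neither step, because recovery is in value, not in split locations. The fit minimizes occupancy-weighted regret over a hypothesis class containing the canonical tree, so the fitted tree's regret is at most the canonical tree's. The closed-loop shift in occupancy is not bounded a priori; it is verified ex post by rolling the tree forward under \eqref{eq:fluid}. Replace your fixed-point and greedy-splitting argument with this containment argument. If your fitter is greedy rather than an exact minimizer, state recovery as conditional on that ex post certificate rather than as reproduction of the splits.
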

\begin{proof}
Appendix~\ref{app:tree}.
\end{proof}

Each leaf is literally a row, ``surge, class = parser $\to$ tier 1,'' that an engineer can read, stress-test, diff, and roll back, which is the deployable format the stump already has and the heatmap never will.

\subsection{Identifying the Price of the Discount}\label{sec:data}

Every primitive of the model but one is read off a benchmark or a meter: service times and power draws from public leaderboards and serving traces, capacities and peak windows from published documentation, electricity from the bill. The exception is the behavioral triple, the dissatisfaction probability $d_j(x)$, the retry probability $\rho_x$, and the expected LTV loss $(p_r\ell)_x$, and the estimation program for it runs on logs every provider already keeps. Tier assignments are recorded per query; sessions link queries to customers; and dissatisfaction leaves fingerprints: a re-ask of the same question within minutes, semantically near its predecessor, an explicit regeneration, a thumbs-down, an agentic run abandoned mid-task. The per-answer rate of retry fingerprints at tier $j$ estimates the product $d_j(x)\rho_x$ directly.

The product is the right target, because it is what the results consume. The multiplier \eqref{eq:multiplier}, the effective quantities \eqref{eq:effective}, the trap condition of Proposition~\ref{pro:trap}, the ignition threshold of Proposition~\ref{pro:spiral}, and the latch of Corollary~\ref{cor:latch} depend on $(d_j,\rho)$ only through $d_j\rho$, the directly estimable object; once it is in hand, fresh demand follows from logged arrivals by the deconvolution of footnote~\ref{fn:demand}. Separating $d_j$ from $\rho$ is needed only for the churn column of \eqref{eq:ledger}, and the abandonment branch identifies it imperfectly: a dissatisfaction fingerprint followed by no retry brackets $d_j(1-\rho)$ from below, and all answers with no follow-up bracket it from above, because a silent satisfied exit and a silent dissatisfied one look alike. The honest output is therefore an interval, which propagates to an interval on the churn column and leaves the capacity side of every frontier untouched. The identification split runs through every downstream object: the fingerprint rate pins $d_j\rho$ exactly, so the multiplier, the effective service time, the trap interval, and the ignition threshold are point-identified from the log; the churn column needs $d_j$ and $\rho_x$ apart plus $(p_r\ell)_x$, and every object reading it, the newsvendor buffer, the index numerator, the dual floor, carries the interval forward. The program is built so that the interval lives on the ledger the dashboard does not show.

The comparison across tiers is confounded by construction: routers degrade under congestion, and congestion changes who is asking and how patient they are, so naive per-tier fingerprint rates mix the tier's effect with the crowd's. The environment supplies two instruments. Past capacity incidents shift tier assignment for fleet-level reasons orthogonal to any individual query, the classical exclusion. And published clock policies are a regression discontinuity in time: DeepSeek's peak windows are published, fixed minutes carrying a two-to-one price shift, and wherever such a clock shifts the served tier, queries seconds apart on either side of the boundary face the same demand and a discontinuous tier shift, so the jump in the fingerprint rate at the boundary estimates the difference in $d_j\rho$ across the switched tiers. The churn side closes the same way: $(p_r\ell)_x$ anchors to subscription price points, and a survival regression of churn on exposure to degraded episodes, instrumented by the same incidents, estimates $p_r$.

This program is why Section~\ref{sec:poc} declares its behavioral parameters as assumptions rather than estimates: the logs it needs exist, but they belong to the providers. What can be done from outside is what Section~\ref{sec:poc} does, anchor the assumptions to public prices and run the pipeline; what a provider can do from inside is replace every assumed number in Table~\ref{tab:poc-params} with a fingerprint rate and an instrumented regression, and then read Proposition~\ref{pro:trap} against its own menu.

\input{Figures/latex_out/sec5_numbers}
\section{Numerical examples}\label{sec:poc}

This section runs the entire pipeline of the paper, statics, dynamics, duality, and policy distillation, on five instances calibrated to public benchmark data. The tier physics of each provider's posted menu, mean service times, quality indices, and cost per task, are read from the Artificial Analysis leaderboard; DeepSeek's concurrency limits, peak windows, and two-to-one peak/off-peak price ratio are taken from its published API documentation; and the behavioral primitives $(\rho_x, p_r\ell_x)$ are declared assumptions anchored to public subscription price points, per the estimation program of Section~\ref{sec:data}. The five providers were chosen because their menus instantiate five distinct regimes of the theory: a steep reasoning-effort ladder (Anthropic), a published trap tier alongside a published clock policy (DeepSeek), a fast ladder where degradation buys throughput rather than energy (Google), a flat ladder where degradation buys nothing (Kimi), and a fourteen-variant lattice containing equal-quality tiers split across the energy and capacity frontiers (OpenAI). Everything below is a proof of concept in the sense declared in Section~\ref{sec:intro}: the measurable skeleton of each instance is real, the behavioral parameters are assumptions, and the point is the pipeline, not the estimates.

\subsection{Instances, menus, and the effective-dominance collapse}\label{sec:poc-instances}

Table~\ref{tab:poc-params} collects the calibration. Three customer classes, shared across all five providers because classes describe customers rather than vendors, carry the heterogeneity of Section~\ref{sec:who}: casual (low retry probability, low lifetime value), specialized (high retry, high value), and agentic (retries by construction), with dissatisfaction $d_j(x)$ linear in the relative quality gap of each ladder and $d_0(x)>0$ for every class, the strong tier included. Demand follows a shared diurnal profile whose peak sits at $0.90$ of the static-optimal-mix throughput $\theta_{\mathrm{mix}}$, the ``barely stable'' operating point that rational provisioning implies, plus an overnight surge at 03:00 (the nineteenth hour of the horizon, which opens at 08:00), deliberately placed off the published peak windows, sized to exceed the comfortable mix while remaining survivable near the maximum-throughput action.

\input{Figures/latex_out/06_table_parameters}

The first output of the pipeline is static: before any dynamics, the per-satisfied-answer ledger of \eqref{eq:ledger} prunes each provider's menu class by class, discarding any tier that another tier weakly dominates in both effective slot-time $\tS_j(x)$ and per-satisfied total cost. The collapse, reported in the last column of Table~\ref{tab:poc-params}, is severe and informative. Anthropic's \spiAnthropicActionsFull{} joint actions reduce to \spiAnthropicActions: the agent class is pinned to the single admissible tier Opus~5 (medium), priced out of the strong tier by slot-time and out of the weak tiers by its own retry multiplier, the two-sided exclusion of Proposition~\ref{pro:index}. DeepSeek's \spiDeepSeekActionsFull{} actions reduce to \spiDeepSeekActions: the trap tier identified by Proposition~\ref{pro:trap} is inadmissible for every class, and, more striking, the \emph{strong} tier is inadmissible for the majority (casual) class, whose entire effective menu is the Flash sibling. OpenAI's \spiOpenAIActionsFull{} actions reduce to \spiOpenAIActions, with the $221$-second flagship surviving only on the specialist menu. Kimi's menu collapses to a single action, all classes on the strong tier: on a flat ladder where the cheap tiers are no faster, the retry multiplier makes every degradation a pure loss, and the correct policy space is a point. These admissible sets are computed from the benchmark numbers and the class primitives alone, no optimization involved; they are the trap proposition doing menu design.

\subsection{Policies and protocol}\label{sec:poc-protocol}

Five policies are compared on a common fine-grained simulator, all starting from the resting level of their own hour-0 action, so that cost accounting is identical and only the decision rule differs. P0 serves every query at the strong tier. P1 is the industry's reactive stump, degrade everyone when total backlog crosses a trigger and release below a second threshold, with the degrade tier and both thresholds tuned per instance by rollout search, a deliberately generous opponent. P2 is the clock policy modeled on DeepSeek's published pricing clock, degrade everyone during the published peak windows regardless of state, again with the tier tuned. P3 is the fluid dynamic program of Section~\ref{sec:analysis}: hourly epochs, the closed-form two-regime transition of Lemma~\ref{lem:fluid} with the arrival rate made endogenous by retries, a $13^3$ interpolated grid over the three class backlogs, and the chance constraint $\pr(W>s)\le 0.05$ at an absolute threshold $s=300$ seconds enforced by pruning, never by dollars. P4 is the depth-three decision tree distilled from P3 by the occupancy-weighted procedure of Section~\ref{sec:tree} and then rolled out as a policy in its own right, so that its value gap to the DP is a certified number rather than a fit statistic.

\input{Figures/latex_out/01_table_results}

\subsection{Results}\label{sec:poc-results}

Table~\ref{tab:poc-results} reports the twenty-four-hour economics and Figure~\ref{fig:poc-traj} the backlog trajectories. Three patterns organize the table. First, under capacity provisioned to the optimal mix, \emph{not optimizing is not an option}: the always-strong policy is unstable on Anthropic and DeepSeek, its backlog diverging under its own retry feedback, and where it survives it does so at up to nearly three orders of magnitude above the optimum (Google) or in violation of the latency constraint at every hour (OpenAI). Second, the two industry heuristics fail in exactly the modes the theory predicts. The clock policy is unstable on Anthropic and DeepSeek and, most cleanly, pathological on Kimi, where its scheduled degradation steps into a flat ladder and manufactures a recurring retry storm, the sawtooth of Figure~\ref{fig:poc-traj} being the fire-and-release cycling of Corollary~\ref{cor:latch}; it is also blind to the off-window surge by construction. The tuned stump stays feasible on most instances but pays for feasibility on the churn ledger: on Anthropic it books \spiAnthropicBestBaseline{} against the DP's \spiAnthropicDPcost, with churned lifetime value of \spiAnthropicStumpChurn{} against the DP's \spiAnthropicChurn{} on identical demand, the uniform-degradation penalty of Proposition~\ref{pro:dominance} in a single column. Third, the fluid DP and its distilled tree are, on every instance, the two cheapest policies on the board and feasible at every hour, the tree edging its teacher on Google by the distillation noise disclosed below, and the DP's margin over the best feasible baseline, the last column of the table, varies with the regime exactly as the statics predict: large factors where the menu is rich (Anthropic at \spiAnthropicDPfactor, OpenAI at \spiOpenAIDPfactor), a factor of \spiGoogleDPfactor{} on Google where the entire gap is surge timing, \spiDeepSeekDPfactor{} on DeepSeek, and \emph{parity} on Kimi, where the DP correctly recognizes that no lever exists and reproduces the always-strong policy to the dollar. The Kimi row is the null result that validates the method: an optimizer that found savings where the theory says none exist would be fitting noise.

\input{Figures/latex_out/04_fig_trajectories}

The Google instance isolates the anticipation mechanism of Proposition~\ref{pro:anticipation}. Its static-optimal mix is all-strong, so off the surge the DP and the always-strong policy agree; the surge exceeds the strong tier's mixed throughput and is survivable only down-ladder, so degradation here buys throughput rather than energy. The DP pre-positions ahead of the 03:00 surge and glides through at a peak backlog orders of magnitude below the baselines, which absorb the same surge as a spike of hundreds of thousands of jobs and a multi-billion-dollar service-level bill. The same mechanism, run in reverse, appears on Kimi: because its maximum-throughput action \emph{is} the all-strong action, the surge there punishes any policy caught degraded, which is precisely the clock policy's failure.

\subsection{The dual, computed}\label{sec:poc-shadow}

\input{Figures/latex_out/08_fig_shadow}

Figure~\ref{fig:shadow} prices the marginal query, by class and by hour, on all five instances: the frozen-policy duals of \eqref{eq:shadow}, finite differences through the closed-form forward pass under the DP's own actions, the entire surface recomputed in milliseconds. The two-part structure of Section~\ref{sec:shadow} is visible on every panel. The floors never meet: off peak the specialized dual sits at \spiAnthropicDualBase{} the casual dual on Anthropic, pure direct-cost spread with no congestion involved, so a router that prices all classes alike is mispricing at four in the morning, not just at the crunch. And the rent is the system's, not the class's: when load rises the three rent curves move together, the shared pool charging everyone the same scarcity premium, with the class identities carried almost entirely by the floors. The exception proves the mechanism: on Google, whose story is the surge, the marginal \emph{casual} query at 03:00 carries \spiGoogleRentWall{} of service-level knock-ons, the dual at the capacity wall, four orders of magnitude above its off-peak floor. On Anthropic the rent peaks at the window shoulders rather than inside the windows: mid-window the DP has already degraded and capacity is cheap, while at the shoulders it is still running strong tiers into rising load, the anticipatory policy visible in the dual. Two disclosed biases: duals in the last hours of the day are truncated by the finite horizon, and the rent reaches the latency constraint through the smooth $C_{\mathrm{SLA}}$ surrogate, consistent with how every rollout in this section is priced. A third disclosure: at the capacity wall the marginal price is genuinely two-sided. Adding one more query at the wall carries the full service-level charge, while removing one relieves it, so at that hour the add-a-query and remove-a-query estimates bracket the printed value rather than agreeing, the wall number \spiGoogleRentWall{} is the high side of that bracket. Away from the wall and away from action switches the two estimates agree to three decimals. DeepSeek and Kimi hold the static-optimal action all day, so their duals carry no switch structure and their rents simply follow load, the dual-side reflection of the parity row in Table~\ref{tab:poc-results}.

\subsection{Certificates}\label{sec:poc-cert}

Table~\ref{tab:poc-cert} certifies the fluid solution against a brute-force competitor: the identical dynamic program, grid, action set, and cost function, with the closed-form transition of Lemma~\ref{lem:fluid} replaced by a converged Euler integration at the per-provider stable step. The fluid solver runs each instance in seconds against the competitor's tens to hundreds, a speedup that grows with the provider's service-rate scale, largest exactly where numerical integration is most expensive; the full solve in seconds and the dual-surface refresh in milliseconds is the ``computable in milliseconds'' claim of Section~\ref{sec:shadow} made concrete, and it is what makes the dual a live control signal rather than an overnight batch job. Two accuracy certificates accompany the speed claim. The policy-value certificate, reported in the table, rolls out the fluid and converged greedy policies on the same simulator and compares their twenty-four-hour costs; it is the deployment-relevant metric and closes the chain of custody, the fluid policy is $\varepsilon$-close to the converged optimum in rollout value, and the tree below is $\delta$-close to the fluid policy, both epsilons printed. The trajectories themselves carry a second certificate: the closed-form forward pass of \eqref{eq:fluid} matches the Euler simulator's hourly states to a maximum relative gap of \spiTrajCheckMax{} (mean \spiTrajCheckMean) over every provider, policy, and hour mark, the diverging paths included; Figure~\ref{fig:poc-traj} is drawn from that forward pass, so what the reader sees is the lemma's formula, verified against the integrator on-trajectory, complementing the off-trajectory grid states the $Q$-regret table probes. The pointwise Q-regret statistics, reported in the appendix, price the fluid policy's actions state by state under the converged Q-function; their tails concentrate off-trajectory in deep-saturation grid states on the shared-pool, high-rate providers, where the frozen-share approximation inside the closed form is weakest, and are reported in full rather than smoothed.

\input{Figures/latex_out/02_table_certificates}

\subsection{The policy in normal form: five trees}\label{sec:poc-trees}

Figures~\ref{fig:trees-anthropic} and~\ref{fig:trees-openai} are the central artifact: the distilled policy, one depth-three tree per class, rendered in the same normal form as the baselines so that the comparison is visual, the stump is a single split on total backlog, the clock a single split on the peak flag, always-strong a single leaf, and the DP's tree shows exactly which splits they are missing. Every tree has at most eight leaves, every certified rollout gap to the DP is within half a percent (\spiAnthropicTreeGap{} and \spiOpenAITreeGap{} on the instances shown, \spiGoogleTreeGap{} on Google; a negative gap is distillation noise around the grid-approximate teacher, not value created), and the splits are the paper's objects surfacing from data. The OpenAI instance adds a second enforcer to the trap. The energy-frontier tier Luna~(max), \$0.05 and $172$ seconds, is statically admissible on the casual menu yet appears nowhere in the distilled routing: at a 300-second response-time SLO its own service already violates the chance constraint at any backlog, so the dynamics never reach a state where the energy saving is collectible. The casual tree therefore lives entirely on the capacity side --- Sol~(medium) at every hour, with a single one-hour excursion to Terra~(low) in the small hours before the surge. The Anthropic casual tree splits at its root on the demand forecast $\hat r_t$, degrading ahead of load, the anticipatory structure of Proposition~\ref{pro:anticipation} as a root node; its specialist tree splits on the peak flag, off-peak at moderate effort and peak hours governed by the two-to-one electricity price, the energy dual of Section~\ref{sec:shadow} as literal tree structure; and its agent tree is a single leaf, a class whose effective menu is a singleton. The Google, DeepSeek, and Kimi trees are in Appendix~\ref{app:trees}; the DeepSeek specialist and agent trees split on the agentic and casual backlogs, cross-class repricing through the shared per-tier pools as literal tree structure, and all three Kimi trees are single leaves with perfect fidelity, the correct policy tree for that provider having one leaf per class, proven rather than asserted.

In the trees, $\hat r_t$ is the demand forecast, $t$ the hour of day, $N_{\mathrm{cas}}, N_{\mathrm{spec}}, N_{\mathrm{ag}}, N_{\mathrm{tot}}$ backlogs by class and in total, and $C$ pool capacity.

\input{Figures/latex_out/03_trees_anthropic}
\input{Figures/latex_out/03_trees_openai}
\FloatBarrier

\section{Concluding Remarks}\label{sec:conclusion}
The paper's argument compresses to one accounting identity and its consequences. The customer buys an answer; the system prices a query; and the wedge between the two, a geometric retry multiplier, inverts the static economics of model tiers, converts a reactive throttle into a demand amplifier with an ignition threshold and a one-way latch, turns heterogeneous throttling into a transportation problem in retry-inflated load, and equips the whole with a dual variable that prices a marginal query by class and by hour. The machinery is known: the newsvendor, the multiplier, and the transient fluid queue are each decades old \parencite{Leontief1966, EickMasseyWhitt1993, HalfinWhitt1981}. What the recomposition unlocks is a control problem that was previously tractable only by simulation: a failure probability that is a control rather than a primitive, closed over by the retry loop, and priced on both ledgers, the one the dashboard shows and the one it does not. The unlocking is multiplicative rather than singular, four collapses compounding: memorylessness collapses the state, the static ledger collapses the action space, the two-regime closed form collapses the transition, and the normal surrogate collapses the tail. Remove any one and the problem returns to overnight simulation.

The boundary of the method is where the first collapse fails. Delayed retries restore the orbit as a state variable, making the dynamics distribution-dependent, a nonlinear Markov chain in the sense of \textcite{DiekerHackmanWangYan2026} via the QPLEX-DQP formalism; that formulation is the natural home for exact tail certification in the critical band and for heavy-tailed token distributions, and it is exactly why the extension is a sequel rather than a section. What this paper's first-order approximation delivers on its side of the boundary is the structure, the trap, the spiral, the index rule, and the dual, and most of the cost recoverable by controlling the system well; what lies beyond it is the sharper accounting of the tails. 
\printbibliography

\appendix
\section{Proofs}\label{app:proofs}

\subsection{Proof of Proposition~\ref{pro:anticipation}}\label{app:anticipation}
Fix a horizon $[0,T]$, a calm rate $r_{\mathrm{calm}}<\theta_0$, and a surge window $[t_1,t_2]$ on which $r_t=r_{\mathrm{surge}}>\theta_0$, with $r_{\mathrm{surge}}>\theta_1$ so the reactive rule fires above ignition and $r_{\mathrm{surge}}\,\tS_1<mk$ so the surge is feasible under full degradation.
\emph{Step 1: an anticipatory path that never ignites.} Degrading a fraction $\varphi\in(0,1]$ of traffic produces retry-inflated offered load $\ell_t(\varphi)=r_t\bigl[(1-\varphi)\,\tS_0+\varphi\,\tS_1\bigr]$. Feasibility guarantees a $\varphi$ and a start $t_0<t_1$ with $\ell_t(\varphi)\le(1-\delta)mk$ for all $t$ and some $\delta>0$, the lead $t_1-t_0$ long enough for the pre-surge backlog to relax below $\ell_{t_1}(\varphi)$ along the exponential branch of \eqref{eq:fluid}. On this path the system never saturates, the service-level term of \eqref{eq:cost} is identically zero, and the energy saved equals $\gamma(\tS_0\Delta w_0-\tS_1\Delta w_1)$ per unit of degraded volume, the same volume the reactive rule degrades over the surge.
\emph{Step 2: the reactive path pays superlinearly.} By Proposition~\ref{pro:spiral} the reactive trajectory crosses $mk$ at some $\tau<t_2$, fires, and thereafter carries constant positive drift $\theta:=r_{\mathrm{surge}}-\theta_1$. Its backlog at $t\in[\tau,t_2]$ is at least $\theta(t-\tau)$, so its cumulative service-level cost is at least $\tfrac{C_{\mathrm{SLA}}}{(mk)^2}\int_\tau^{t_2}\theta^2(t-\tau)^2 dt=\tfrac{C_{\mathrm{SLA}}\theta^2}{3(mk)^2}(t_2-\tau)^3$, cubic in the surge duration. The bill does not close at $t_2$: if $r_{\mathrm{calm}}<\theta_1$, the excess backlog $B:=\theta(t_2-\tau)$ drains at rate $\beta:=\theta_1-r_{\mathrm{calm}}$ and contributes a further $\tfrac{C_{\mathrm{SLA}}}{(mk)^2}\int_0^{B/\beta}(B-\beta t)^2\,dt=\tfrac{C_{\mathrm{SLA}}\theta^3}{3(mk)^2\beta}(t_2-\tau)^3$, the same cube scaled by $\theta/\beta$; if $r_{\mathrm{calm}}\ge\theta_1$, the degraded queue never drains at all and the reactive cost is unbounded regardless of surge length.
\emph{Step 3: comparison.} The anticipatory policy pays one charge the reactive avoids, churn over the lead window, of size $P:=\varphi\,r_{\mathrm{calm}}\,M_1\,d_1(1-\rho)\,p_r\ell\,(t_1-t_0)$, since each degraded customer abandons with probability $M_1d_1(1-\rho)$ by \eqref{eq:ledger} (relative to tier-0 churn; for $d_0>0$ replace $M_1d_1$ by $M_1d_1-M_0d_0$), linear in the lead and independent of the surge length. Summing the two cubes of Step 2, the reactive excess is at least $\tfrac{C_{\mathrm{SLA}}\theta^2}{3(mk)^2}\bigl(1+\tfrac{\theta}{\beta}\bigr)(t_2-\tau)^3$, and setting this equal to $P$ and solving for $t_2-\tau$ gives the critical window $T^{\star}$ of the statement: for $t_2-\tau<T^{\star}$ the reactive rule is cheaper and anticipation does not pay; for $t_2-\tau>T^{\star}$ the anticipatory policy strictly dominates, with a margin growing as the cube of the excess. If the release threshold sits below the degraded calm equilibrium, Corollary~\ref{cor:latch} makes the reactive ledger unbounded and dominance strict at every $T$ regardless of $T^{\star}$.
\emph{The chance-constraint surrogate.} With $q(N):=(N-mk)^+$ and $\mathbf{1}_{\mathrm{sat}}:=\ind\{N\ge mk\}$, an arrival finding $N$ resident jobs queues behind $q(N)$ jobs served at aggregate rate $k\mu_j m$, then occupies a slot for one exponential service at rate $\mu_j$; the normal approximation to this Erlang-plus-service response time gives the surrogate, and pruning the violating pairs preserves feasibility of every surviving trajectory. \hfill$\square$

\subsection{Proof of Proposition~\ref{pro:index}}\label{app:index}
Attach a multiplier $\nu\ge 0$ to the binding capacity constraint of \eqref{eq:transport}. The Lagrangian separates by class, and moving a unit of class-$x$ traffic from tier $j$ to $j+1$ changes the objective by $\lambda_x\,\partial c(x,j)/\partial j$ and relaxes the constraint by $\lambda_x\bigl(\tS_j(x)-\tS_{j+1}(x)\bigr)$; at the margin against tier 0, the relief per unit is $\tS_0(x)-\tS_j(x)$. A routing is optimal iff no swap of degraded volume between classes improves the Lagrangian, i.e., iff classes with positive relief are degraded in increasing order of the ratio $I(x,j)$ of damage to relief, while classes with nonpositive relief, the trap region of Proposition~\ref{pro:trap}, are never degraded at a binding constraint since doing so tightens it; ties are resolved arbitrarily and do not affect the value. Nestedness follows from LP sensitivity: the value $v(b)$ of \eqref{eq:transport} is convex and nonincreasing in the capacity $b$, so the dual $\nu(b) = -\partial v/\partial b$ is nondecreasing as $b$ tightens, and the degraded set $\{x : I(x,j) \le \nu(b)\}$ is therefore nested in $b$. \hfill$\square$

\textbf{Assumption (N).} In the dynamic problem \eqref{eq:value}, the optimal degraded set is nondecreasing in the congestion state $N$. (This is the dynamic analog of the budget-nestedness of Proposition~\ref{pro:index}; it holds on every instance of Section~\ref{sec:poc} and is used only in Proposition~\ref{pro:tree}.)

\subsection{Proof of Proposition~\ref{pro:tree}}\label{app:tree}
By Proposition~\ref{pro:index} the optimal action at $(t,N,x)$ is determined by (i) the regime of $t$, calm or surge, through the arrival rate entering $\nu$; (ii) the position of $I(x,\cdot)$ in the class ordering; and (iii) the position of $N$ relative to the at most $J$ thresholds at which $\nu$ crosses the successive indices of class $x$. A tree that splits first on regime, then on class, then on the class-specific thresholds in $N$, reproduces this map exactly, with at most $\#\text{regimes}\times|\calx|\times J$ leaves; $\epsilon$-exactness with fewer leaves follows by merging any adjacent cells whose $Q$-gap is below $\epsilon$ under the occupancy measure. That the distillation of Section~\ref{sec:tree} recovers it follows because the fit minimizes occupancy-weighted regret over a hypothesis class containing this tree, and the closed-loop certificate verifies attainment ex post; the collapse to depth one below the break-even expected LTV loss of Remark~\ref{rem:memory} is the case of a single effective tier with the $N$-threshold at infinity. \hfill$\square$

\section{Additional numerical assets}\label{app:trees}
\input{Figures/latex_out/03_trees_google}
\input{Figures/latex_out/03_trees_deepseek}
\input{Figures/latex_out/03_trees_kimi}
\input{Figures/latex_out/07_table_qregret}
\end{document}

%% file: Figures/latex_out/sec5_numbers.tex
\newcommand{\spiAnthropicDPcost}{\$9.38M}
\newcommand{\spiAnthropicTreeGap}{0.54\%}
\newcommand{\spiAnthropicBestBaseline}{\$648.95M}
\newcommand{\spiAnthropicDPfactor}{69\ensuremath{\times}}
\newcommand{\spiAnthropicChurn}{\$4.26M}
\newcommand{\spiAnthropicStumpChurn}{\$57.20M}
\newcommand{\spiAnthropicSpeedup}{75\ensuremath{\times}}
\newcommand{\spiAnthropicPVgap}{+5.89\%}
\newcommand{\spiAnthropicFluidSecs}{0.31s}
\newcommand{\spiAnthropicActions}{9}
\newcommand{\spiAnthropicActionsFull}{125}
\newcommand{\spiOpenAIDPcost}{\$1.39M}
\newcommand{\spiOpenAITreeGap}{0.20\%}
\newcommand{\spiOpenAIBestBaseline}{\$14.49M}
\newcommand{\spiOpenAIDPfactor}{10\ensuremath{\times}}
\newcommand{\spiOpenAIChurn}{\$533,883}
\newcommand{\spiOpenAIStumpChurn}{\$747,958}
\newcommand{\spiOpenAISpeedup}{114\ensuremath{\times}}
\newcommand{\spiOpenAIPVgap}{+4.22\%}
\newcommand{\spiOpenAIFluidSecs}{2.41s}
\newcommand{\spiOpenAIActions}{72}
\newcommand{\spiOpenAIActionsFull}{2744}
\newcommand{\spiGoogleDPcost}{\$8.90M}
\newcommand{\spiGoogleTreeGap}{-0.43\%}
\newcommand{\spiGoogleBestBaseline}{\$3.30B}
\newcommand{\spiGoogleDPfactor}{371\ensuremath{\times}}
\newcommand{\spiGoogleChurn}{\$4.01M}
\newcommand{\spiGoogleStumpChurn}{\$4.08M}
\newcommand{\spiGoogleSpeedup}{302\ensuremath{\times}}
\newcommand{\spiGooglePVgap}{+1.73\%}
\newcommand{\spiGoogleFluidSecs}{0.95s}
\newcommand{\spiGoogleActions}{27}
\newcommand{\spiGoogleActionsFull}{64}
\newcommand{\spiDeepSeekDPcost}{\$2.02M}
\newcommand{\spiDeepSeekTreeGap}{0.00\%}
\newcommand{\spiDeepSeekBestBaseline}{\$49.51M}
\newcommand{\spiDeepSeekDPfactor}{24\ensuremath{\times}}
\newcommand{\spiDeepSeekChurn}{\$888,887}
\newcommand{\spiDeepSeekStumpChurn}{\$1.25M}
\newcommand{\spiDeepSeekSpeedup}{78\ensuremath{\times}}
\newcommand{\spiDeepSeekPVgap}{+0.00\%}
\newcommand{\spiDeepSeekFluidSecs}{0.13s}
\newcommand{\spiDeepSeekActions}{4}
\newcommand{\spiDeepSeekActionsFull}{27}
\newcommand{\spiKimiDPcost}{\$3.44M}
\newcommand{\spiKimiTreeGap}{0.00\%}
\newcommand{\spiKimiBestBaseline}{\$3.44M}
\newcommand{\spiKimiDPfactor}{1\ensuremath{\times}}
\newcommand{\spiKimiChurn}{\$601,967}
\newcommand{\spiKimiStumpChurn}{\$601,967}
\newcommand{\spiKimiSpeedup}{69\ensuremath{\times}}
\newcommand{\spiKimiPVgap}{+0.00\%}
\newcommand{\spiKimiFluidSecs}{0.04s}
\newcommand{\spiKimiActions}{1}
\newcommand{\spiKimiActionsFull}{27}
\newcommand{\spiTrajCheckMax}{0.43\%}
\newcommand{\spiTrajCheckMean}{0.008\%}

\newcommand{\spiAnthropicDualBase}{3.9\ensuremath{\times}}
\newcommand{\spiAnthropicRentPeak}{\$0.55}
\newcommand{\spiAnthropicRentWall}{\$0}
\newcommand{\spiOpenAIDualBase}{2.8\ensuremath{\times}}
\newcommand{\spiOpenAIRentPeak}{\$0.38}
\newcommand{\spiOpenAIRentWall}{\$0}
\newcommand{\spiGoogleDualBase}{2.6\ensuremath{\times}}
\newcommand{\spiGoogleRentPeak}{\$0.10}
\newcommand{\spiGoogleRentWall}{\$145}
\newcommand{\spiDeepSeekDualBase}{4.5\ensuremath{\times}}
\newcommand{\spiDeepSeekRentPeak}{\$0.05}
\newcommand{\spiDeepSeekRentWall}{\$0}
\newcommand{\spiKimiDualBase}{1.9\ensuremath{\times}}
\newcommand{\spiKimiRentPeak}{\$0.22}
\newcommand{\spiKimiRentWall}{\$0}

%% file: Figures/latex_out/06_table_parameters.tex
\begin{table}[t]\centering
\caption{Calibrated instances. Markers give each number's provenance: $^{\mathrm{B}}$ benchmark (Artificial Analysis), $^{\mathrm{P}}$ published by the provider, $^{\mathrm{A}}$ assumption anchored to public prices, $^{\mathrm{C}}$ calibration choice, $^{\mathrm{D}}$ derived. Symbols as in Table~\ref{tab:notation}, by class $x$ (Section~\ref{sec:who}).}\label{tab:poc-params}
\footnotesize
\begin{tabular}{@{}lrrrL{5.6cm}@{}}\toprule
\multicolumn{5}{@{}l}{\emph{customer classes (shared across providers)}}\\
 & casual & special. & agentic & \\\midrule
share$^{\mathrm{A}}$ & 0.60 & 0.25 & 0.15 & share of fresh demand \\
$\rho_x$$^{\mathrm{A}}$ & 0.30 & 0.85 & 0.98 & probability an unsatisfied user re-asks \\
$d_{0,x}$$^{\mathrm{A}}$ & 0.02 & 0.05 & 0.10 & dissatisfaction at the strongest tier \\
$\beta_x$$^{\mathrm{A}}$ & 0.8 & 2.5 & 3.0 & quality sensitivity: $d_j(x)=\min\{0.95,\,d_{0,x}+\beta_x(1-I_j/I_0)\}$ \\
$p_r$$^{\mathrm{A}}$ & 0.03 & 0.08 & 0.05 & churn probability upon abandonment \\
$\ell$$^{\mathrm{A}}$ & \$240 & \$1,800 & \$2,000 & lifetime margin of a churned customer \\
$p_r\ell$$^{\mathrm{D}}$ & \$7.2 & \$144 & \$100 & churn charge per abandonment, $p_r\ell$ \\
\bottomrule\end{tabular}\\[5pt]
\begin{tabular}{@{}lrrrrL{4.6cm}@{}}\toprule
\multicolumn{6}{@{}l}{\emph{provider instances (one per regime of the theory)}}\\
 & tiers$^{\mathrm{B}}$ & pool$^{\mathrm{C}}$ & $\theta_{\mathrm{mix}}$/hr$^{\mathrm{D}}$ & actions$^{\mathrm{D}}$ & regime \\\midrule
Anthropic & 5 & 2\,000 & 190\,943 & 125$\to$9 & steep reasoning-effort ladder \\
OpenAI & 14 & 2\,000 & 40\,258 & 2744$\to$72 & 14-variant lattice, tiers split across the two frontiers \\
Google & 4 & 2\,000 & 462\,784 & 64$\to$27 & fast ladder: degradation buys throughput, not energy \\
DeepSeek & 3 & 3\,500$^{\mathrm{P}}$ & 123\,362 & 27$\to$4 & published trap tier, clock policy, per-tier pools \\
Kimi & 3 & 2\,000 & 102\,576 & 27$\to$1 & flat ladder: no lever; the policy space is a point \\
\bottomrule\end{tabular}\\[5pt]
\begin{tabular}{@{}lL{9.6cm}@{}}\toprule
\multicolumn{2}{@{}l}{\emph{shared calibration}}\\\midrule
tier physics$^{\mathrm{B}}$ & service time $\ex[S_j]$, quality index $I_j$, cost per task $c_j$, per tier \\
demand$^{\mathrm{C}}$ & daily profile peaking at $0.90\,\theta_{\mathrm{mix}}$; overnight surge at 03:00, sized $\min(1.5\,\theta_{\mathrm{mix}},\,0.9\,\theta_{\max})$ \\
peak windows$^{\mathrm{P}}$ & 09--12 and 14--18; electricity price $\times 2$ inside them \\
service level$^{\mathrm{C}}$ & $\pr(W>300\mathrm{s})\le 0.05$, enforced by pruning \\
cost scales$^{\mathrm{C}}$ & markup $3\times$; $C_{\mathrm{SLA}}$ = one peak hour of tier-0 revenue; memory exponent $\kappa=1.3$; idle draw $30\%$ of tier-0 active \\
\bottomrule\end{tabular}
\end{table}

%% file: Figures/latex_out/01_table_results.tex
\begin{table}[t]\centering
\caption{Twenty-four-hour economic cost by policy and provider; lower is better, and the fluid DP (bold) is on every instance simultaneously the cheapest policy and feasible at every hour. Demand is scaled to each provider's own throughput, so dollar figures compare across policies within a row, not across providers. Baselines: \emph{always strongest} serves every query at tier 0; the \emph{reactive threshold} degrades everyone when total backlog crosses a trigger, releasing below a second threshold; \emph{scheduled} degrades everyone during the published peak windows; both are tuned per instance (best degrade tier and thresholds by rollout search). $^\dagger$\emph{unstable}: the backlog diverges under its own retry feedback (Proposition~\ref{pro:spiral}), so no finite cost is meaningful. $^{k\mathrm{v}}$: the policy violates the 300-second latency SLA in $k$ of 24 hours. The final column reports the cost of the best \emph{feasible} baseline as a multiple of the DP.}\label{tab:poc-results}
\small\begin{tabular}{@{}lrrrrrr@{}}\toprule
provider & always strongest & reactive threshold & scheduled & \textbf{fluid DP} & tree & best/DP \\\midrule
Anthropic & {\color{gray}\emph{unstable}}$^\dagger$ & \$648.95M & {\color{gray}\emph{unstable}}$^\dagger$ & \textbf{\$9.38M} & \$9.43M & \textbf{69$\times$} \\
OpenAI & \$110.56M$^{15\mathrm{v}}$ & \$14.49M & \$60.52M$^{4\mathrm{v}}$ & \textbf{\$1.39M} & \$1.39M & \textbf{10$\times$} \\
Google & \$7.27B$^{2\mathrm{v}}$ & \$3.30B & \$7.27B$^{2\mathrm{v}}$ & \textbf{\$8.90M} & \$8.86M & \textbf{371$\times$} \\
DeepSeek & {\color{gray}\emph{unstable}}$^\dagger$ & \$49.51M$^{4\mathrm{v}}$ & {\color{gray}\emph{unstable}}$^\dagger$ & \textbf{\$2.02M} & \$2.02M & \textbf{24$\times$} \\
Kimi & \$3.44M & \$3.44M & \$1.47B$^{9\mathrm{v}}$ & \textbf{\$3.44M} & \$3.44M & \textbf{1.0$\times$} \\
\bottomrule\end{tabular}
\end{table}

%% file: Figures/latex_out/04_fig_trajectories.tex
\begin{figure}[t]\centering
\includegraphics[width=\textwidth]{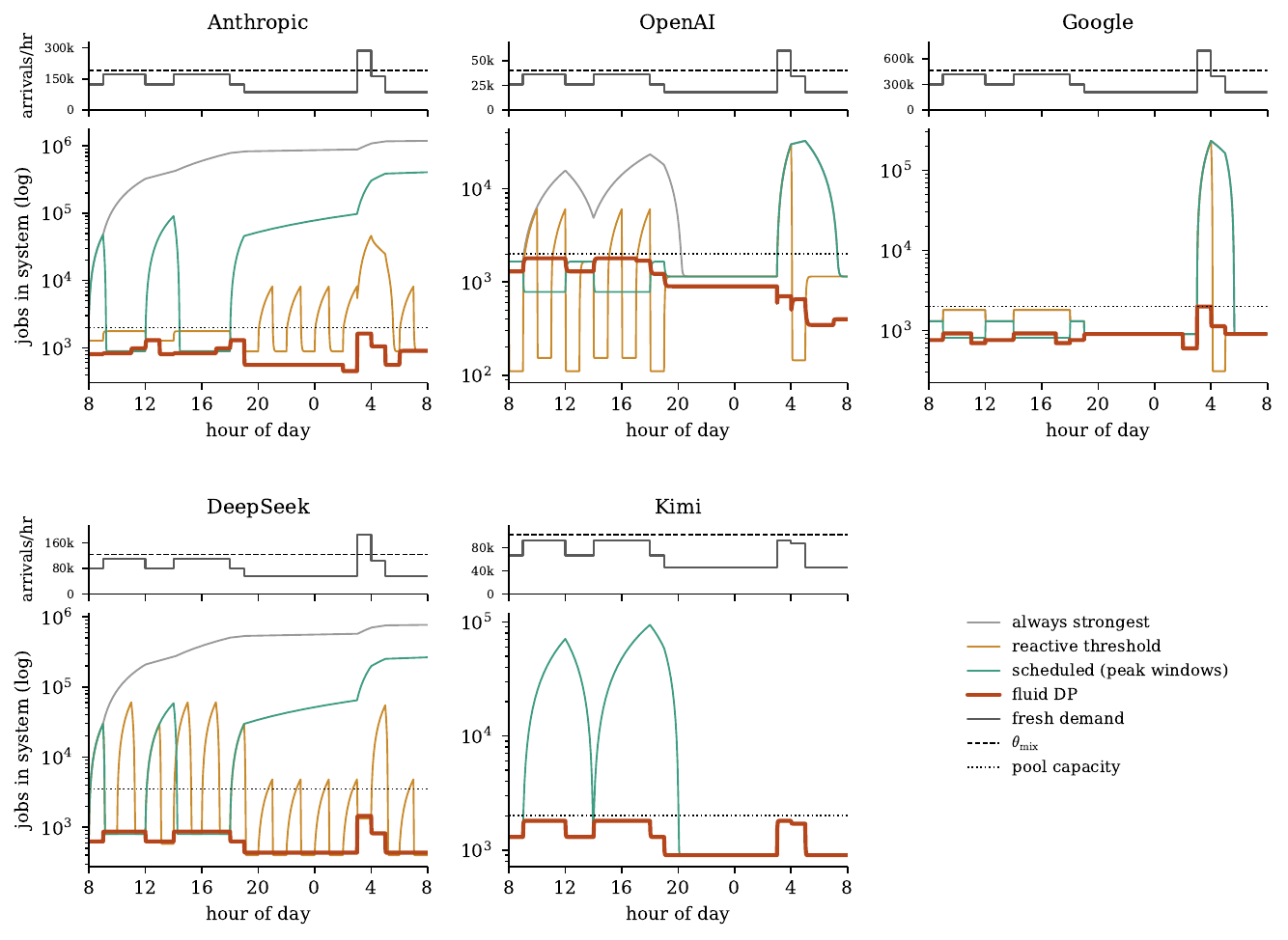}
\caption{Demand and backlog under the policies of Table~\ref{tab:poc-results}, five providers, one shared diurnal profile scaled to each provider's throughput (hour of day; the day is a finite horizon, not a cycle, so the two edges need not match). \emph{Top panels}: fresh demand, with the static-optimal-mix throughput $\theta_{\mathrm{mix}}$ dashed; the morning and afternoon plateaus are the published peak windows, and the overnight spike at 03:00 is the surge, outside every scheduled window and above $\theta_{\mathrm{mix}}$. On Kimi the surge is a plateau rather than a spike: it is sized $\min(1.5\,\theta_{\mathrm{mix}},\,0.9\,\theta_{\max})$, and on a flat ladder the all-strong action is already the maximum-throughput action, so the cap binds at the daily peak level. \emph{Bottom panels}: the resulting backlogs (log scale), the closed-form forward pass of \eqref{eq:fluid} under each policy's logged hourly actions, validated against the Euler simulator's hourly states to a maximum relative gap of \spiTrajCheckMax{}, diverging paths included; every excess of backlog growth over what fresh demand alone would produce is policy-manufactured retry traffic. The reactive threshold's spike train on DeepSeek is the latch of Corollary~\ref{cor:latch} on published capacities: each release drops all classes back into the 500-slot Pro pool and the trigger re-fires. The distilled tree is omitted: visually identical to the fluid DP on every panel (value gaps in Table~\ref{tab:poc-results}).}\label{fig:poc-traj}
\end{figure}

%% file: Figures/latex_out/08_fig_shadow.tex
\begin{figure}[t]\centering
\includegraphics[width=\textwidth]{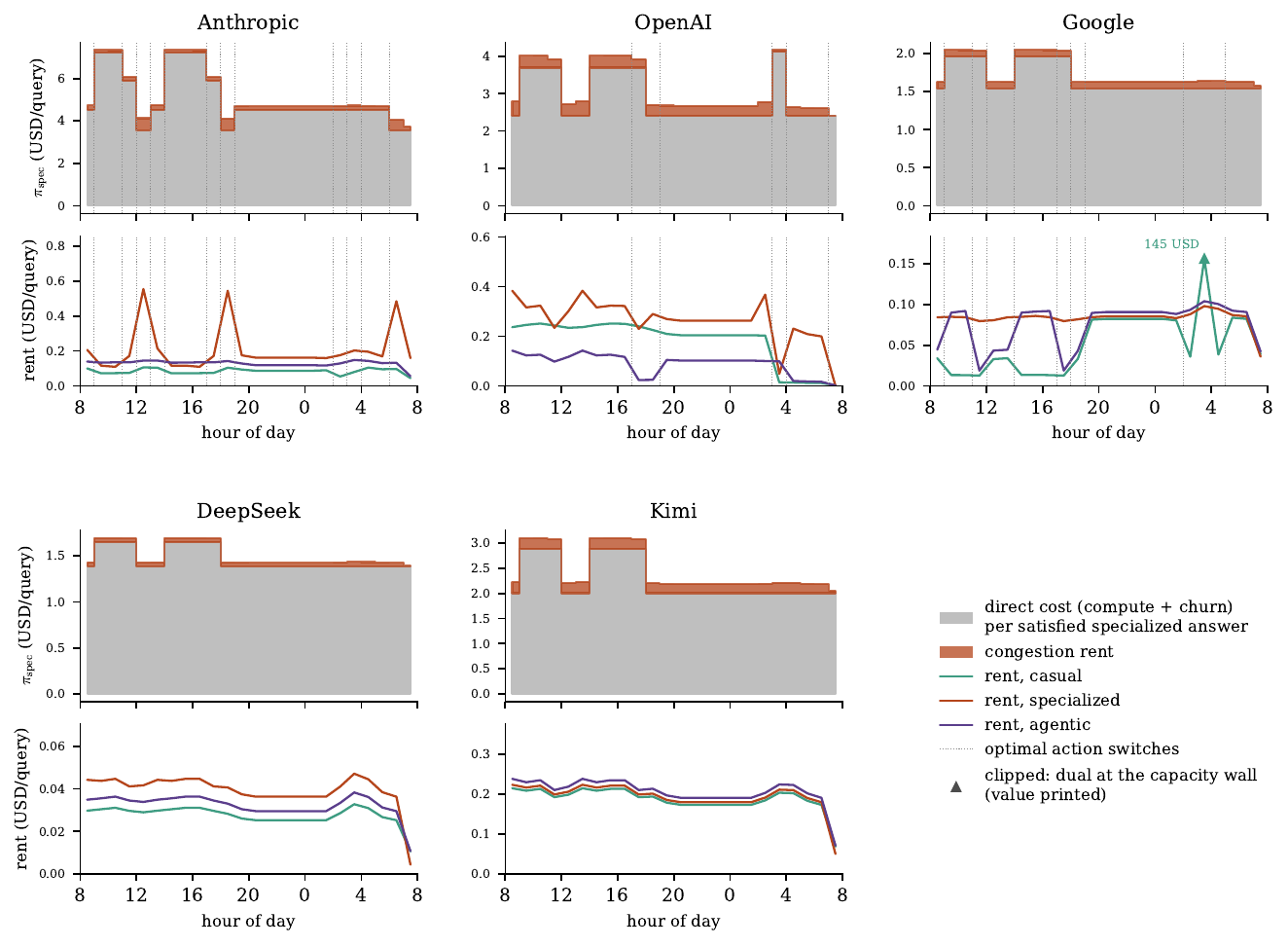}
\caption{The shadow price of intelligence, five providers: frozen-policy duals $\pi_x(t)=\partial V^{\star}/\partial\lambda_x(t)$ of \eqref{eq:shadow}, finite differences through the closed-form forward pass under the DP's actions; dotted verticals mark hours where the optimal action switches, where the dual may kink. \emph{Top panels}: the specialized class's dual decomposed into the direct cost of one more satisfied answer at the hour's action, $M_j(x)\bigl(c_j+d_j(x)(1{-}\rho_x)p_r\ell_x\bigr)$ with the peak electricity multiplier, and the congestion rent above it. \emph{Bottom panels}: the rent by class, in dollars. Two facts organize the panels. The direct floors differ across classes at every hour, so the flat relative price of one that any uniform policy implicitly charges is wrong around the clock, off peak by \spiAnthropicDualBase{} on Anthropic; and the pool charges a near-common rent when capacity binds, so dollar gaps between classes widen at the crunch while dual ratios compress. Rent panels are clipped where a single hour dominates, the value printed at the marker: on Google the marginal casual query at the surge carries \spiGoogleRentWall{} of service-level knock-ons, the dual at the capacity wall. DeepSeek and Kimi hold one action all day, the static optimum: no switches, and rent moves only with load. Duals in the last hours are biased low by the finite horizon, and the rent reaches the latency constraint through the smooth $C_{\mathrm{SLA}}$ surrogate rather than a hard wall.}\label{fig:shadow}
\end{figure}

%% file: Figures/latex_out/02_table_certificates.tex
\begin{table}[t]\centering
\caption{Runtime and accuracy of the fluid solver against a brute-force competitor: the identical dynamic program, grid, action set, and cost function, with the closed-form transition replaced by a converged Euler integration at the per-provider stable step. The policy-value gap compares twenty-four-hour rollout costs of the two greedy policies on a common simulator; pointwise Q-regret statistics are reported in Appendix~\ref{app:trees}.}\label{tab:poc-cert}
\small\begin{tabular}{@{}lrrrr@{}}\toprule
provider & fluid (s) & Euler (s) & speedup & policy-value gap \\\midrule
Anthropic & 0.31 & 23 & 75$\times$ & +5.89\% \\
OpenAI & 2.41 & 274 & 114$\times$ & +4.22\% \\
Google & 0.95 & 287 & 302$\times$ & +1.73\% \\
DeepSeek & 0.13 & 11 & 78$\times$ & +0.00\% \\
Kimi & 0.04 & 3 & 69$\times$ & +0.00\% \\
\bottomrule\end{tabular}
\end{table}

%% file: Figures/latex_out/03_trees_anthropic.tex
\providecommand{\tleaf}[3]{\begin{tabular}{@{}c@{}}{\scriptsize\textbf{\textcolor{#1}{#3}}}\\[-2.5pt]{\tiny\textcolor{black!55}{tier #2}}\end{tabular}}
\providecommand{\tswatch}[2]{\textcolor{#1}{\rule[-0.25ex]{0.9em}{1.5ex}}\,{\scriptsize #2}}
\providecommand{\spifitbox}[1]{\sbox0{#1}\ifdim\wd0>\linewidth\resizebox{\linewidth}{!}{\usebox0}\else\usebox0\fi}
\forestset{spitree/.style={for tree={draw=black!30, rounded corners=1.5pt, edge={black!45,thin}, l sep=7pt, s sep=4pt, inner sep=2.5pt, font=\scriptsize, align=center}}}
\definecolor{anthropicT0}{HTML}{1A3A6B}
\definecolor{anthropicT1}{HTML}{2C6E9B}
\definecolor{anthropicT2}{HTML}{3E9C82}
\definecolor{anthropicT3}{HTML}{C98A2B}
\definecolor{anthropicT4}{HTML}{B5451B}
\begin{figure}[!ht]\centering
\textbf{Anthropic}\\[3pt]
{\scriptsize models, strongest $\to$ weakest:\quad \tswatch{anthropicT0}{Opus 5 max (tier 0)}\quad \tswatch{anthropicT1}{Opus 5 high (tier 1)}\quad \tswatch{anthropicT2}{Opus 5 medium (tier 2)}\quad \tswatch{anthropicT3}{Opus 5 low (tier 3)}\quad \tswatch{anthropicT4}{Sonnet 5 non-reasoning (tier 4)}}\\[7pt]
{\small\itshape where class = casual is served, at every hour (fluid DP, distilled)}\\[3pt]
\spifitbox{\begin{forest}spitree
[{$\hat r_t\le 162\,301$} [{$N_{\mathrm{ag}}\le 35$} [{$N_{\mathrm{cas}}\le 456$} [\tleaf{anthropicT2}{2}{Opus 5 medium}, fill=anthropicT2!10, draw=anthropicT2!70, line width=0.7pt,edge label={node[midway,left,font=\tiny]{yes}}] [\tleaf{anthropicT4}{4}{Sonnet 5 non-reasoning}, fill=anthropicT4!10, draw=anthropicT4!70, line width=0.7pt,edge label={node[midway,right,font=\tiny]{no}}],edge label={node[midway,left,font=\tiny]{yes}}] [{$N_{\mathrm{spec}}\le 59$} [\tleaf{anthropicT4}{4}{Sonnet 5 non-reasoning}, fill=anthropicT4!10, draw=anthropicT4!70, line width=0.7pt,edge label={node[midway,left,font=\tiny]{yes}}] [\tleaf{anthropicT2}{2}{Opus 5 medium}, fill=anthropicT2!10, draw=anthropicT2!70, line width=0.7pt,edge label={node[midway,right,font=\tiny]{no}}],edge label={node[midway,right,font=\tiny]{no}}],edge label={node[midway,left,font=\tiny]{yes}}] [{$N_{\mathrm{ag}}\le 35$} [\tleaf{anthropicT4}{4}{Sonnet 5 non-reasoning}, fill=anthropicT4!10, draw=anthropicT4!70, line width=0.7pt,edge label={node[midway,left,font=\tiny]{yes}}] [{$N_{\mathrm{spec}}\le 59$} [\tleaf{anthropicT4}{4}{Sonnet 5 non-reasoning}, fill=anthropicT4!10, draw=anthropicT4!70, line width=0.7pt,edge label={node[midway,left,font=\tiny]{yes}}] [\tleaf{anthropicT3}{3}{Opus 5 low}, fill=anthropicT3!10, draw=anthropicT3!70, line width=0.7pt,edge label={node[midway,right,font=\tiny]{no}}],edge label={node[midway,right,font=\tiny]{no}}],edge label={node[midway,right,font=\tiny]{no}}]]
\end{forest}}\\[10pt]
{\small\itshape where class = specialized is served, at every hour (fluid DP, distilled)}\\[3pt]
\spifitbox{\begin{forest}spitree
[{peak window ($t\in$ 09:00--12:00, 14:00--18:00)?} [{before 06:00?} [{$N_{\mathrm{ag}}\le 35$} [\tleaf{anthropicT2}{2}{Opus 5 medium}, fill=anthropicT2!10, draw=anthropicT2!70, line width=0.7pt,edge label={node[midway,left,font=\tiny]{yes}}] [\tleaf{anthropicT1}{1}{Opus 5 high}, fill=anthropicT1!10, draw=anthropicT1!70, line width=0.7pt,edge label={node[midway,right,font=\tiny]{no}}],edge label={node[midway,left,font=\tiny]{yes}}] [\tleaf{anthropicT0}{0}{Opus 5 max}, fill=anthropicT0!10, draw=anthropicT0!70, line width=0.7pt,edge label={node[midway,right,font=\tiny]{no}}],edge label={node[midway,left,font=\tiny]{no}}] [{before 17:00?} [{$N_{\mathrm{spec}}\le 59$} [\tleaf{anthropicT0}{0}{Opus 5 max}, fill=anthropicT0!10, draw=anthropicT0!70, line width=0.7pt,edge label={node[midway,left,font=\tiny]{yes}}] [\tleaf{anthropicT2}{2}{Opus 5 medium}, fill=anthropicT2!10, draw=anthropicT2!70, line width=0.7pt,edge label={node[midway,right,font=\tiny]{no}}],edge label={node[midway,left,font=\tiny]{yes}}] [\tleaf{anthropicT1}{1}{Opus 5 high}, fill=anthropicT1!10, draw=anthropicT1!70, line width=0.7pt,edge label={node[midway,right,font=\tiny]{no}}],edge label={node[midway,right,font=\tiny]{yes}}]]
\end{forest}}\\[10pt]
{\small\itshape where class = agentic is served, at every hour (fluid DP, distilled)}\\[3pt]
\spifitbox{\begin{forest}spitree
[\tleaf{anthropicT2}{2}{Opus 5 medium}, fill=anthropicT2!10, draw=anthropicT2!70, line width=0.7pt]
\end{forest}}\\[10pt]
{\small\itshape baselines: one tree for every class}\\[3pt]
\spifitbox{\begin{tabular}{c@{\hspace{2.5em}}c@{\hspace{2.5em}}c}
{\scriptsize\itshape reactive threshold} & {\scriptsize\itshape scheduled (peak windows)} & {\scriptsize\itshape always strongest} \\[3pt]
\begin{forest}spitree [{$N_{\mathrm{tot}}> 0.8C$?} [\tleaf{anthropicT0}{0}{Opus 5 max}, fill=anthropicT0!10, draw=anthropicT0!70, line width=0.7pt,edge label={node[midway,left,font=\tiny]{no}}] [\tleaf{anthropicT4}{4}{Sonnet 5 non-reasoning}, fill=anthropicT4!10, draw=anthropicT4!70, line width=0.7pt,edge label={node[midway,right,font=\tiny]{yes}}]] \end{forest} &
\begin{forest}spitree [{peak window ($t\in$ 09:00--12:00, 14:00--18:00)?} [\tleaf{anthropicT0}{0}{Opus 5 max}, fill=anthropicT0!10, draw=anthropicT0!70, line width=0.7pt,edge label={node[midway,left,font=\tiny]{no}}] [\tleaf{anthropicT3}{3}{Opus 5 low}, fill=anthropicT3!10, draw=anthropicT3!70, line width=0.7pt,edge label={node[midway,right,font=\tiny]{yes}}]] \end{forest} &
\begin{forest}spitree [\tleaf{anthropicT0}{0}{Opus 5 max}, fill=anthropicT0!10, draw=anthropicT0!70, line width=0.7pt] \end{forest}
\end{tabular}}
\caption{Anthropic: the distilled routing, one tree per class, and the industry baselines in the same form. Baselines: \emph{reactive threshold} degrades everyone above a backlog trigger (release at $0.5\,C$, $C$ the pool capacity); \emph{scheduled} degrades during the published windows; \emph{always strongest} never degrades. Trees may split on other classes' backlogs, since a shared pool reprices capacity everywhere; splits with identical branches are merged. Certified rollout value gap of the tree to the DP: 0.54\%.}\label{fig:trees-anthropic}
\end{figure}

%% file: Figures/latex_out/03_trees_openai.tex
\providecommand{\tleaf}[3]{\begin{tabular}{@{}c@{}}{\scriptsize\textbf{\textcolor{#1}{#3}}}\\[-2.5pt]{\tiny\textcolor{black!55}{tier #2}}\end{tabular}}
\providecommand{\tswatch}[2]{\textcolor{#1}{\rule[-0.25ex]{0.9em}{1.5ex}}\,{\scriptsize #2}}
\providecommand{\spifitbox}[1]{\sbox0{#1}\ifdim\wd0>\linewidth\resizebox{\linewidth}{!}{\usebox0}\else\usebox0\fi}
\forestset{spitree/.style={for tree={draw=black!30, rounded corners=1.5pt, edge={black!45,thin}, l sep=7pt, s sep=4pt, inner sep=2.5pt, font=\scriptsize, align=center}}}
\definecolor{openaiT0}{HTML}{1A3A6B}
\definecolor{openaiT1}{HTML}{3E9C82}
\definecolor{openaiT4}{HTML}{B5451B}
\definecolor{openaiT6}{HTML}{5B3E8C}
\begin{figure}[!ht]\centering
\textbf{OpenAI}\\[3pt]
{\scriptsize models, strongest $\to$ weakest:\quad \tswatch{openaiT0}{Sol max (tier 0)}\quad \tswatch{openaiT1}{Sol xhigh (tier 1)}\quad \tswatch{openaiT4}{Sol medium (tier 4)}\quad \tswatch{openaiT6}{Luna max (tier 6)}}\\[7pt]
{\small\itshape where class = casual is served, at every hour (fluid DP, distilled)}\\[3pt]
\spifitbox{\begin{forest}spitree
[{before 03:00?} [{$N_{\mathrm{tot}}\le 3\,345$} [\tleaf{openaiT6}{6}{Luna max}, fill=openaiT6!10, draw=openaiT6!70, line width=0.7pt,edge label={node[midway,left,font=\tiny]{yes}}] [\tleaf{openaiT4}{4}{Sol medium}, fill=openaiT4!10, draw=openaiT4!70, line width=0.7pt,edge label={node[midway,right,font=\tiny]{no}}],edge label={node[midway,left,font=\tiny]{yes}}] [{before 05:00?} [\tleaf{openaiT4}{4}{Sol medium}, fill=openaiT4!10, draw=openaiT4!70, line width=0.7pt,edge label={node[midway,left,font=\tiny]{yes}}] [{before 07:00?} [\tleaf{openaiT6}{6}{Luna max}, fill=openaiT6!10, draw=openaiT6!70, line width=0.7pt,edge label={node[midway,left,font=\tiny]{yes}}] [\tleaf{openaiT4}{4}{Sol medium}, fill=openaiT4!10, draw=openaiT4!70, line width=0.7pt,edge label={node[midway,right,font=\tiny]{no}}],edge label={node[midway,right,font=\tiny]{no}}],edge label={node[midway,right,font=\tiny]{no}}]]
\end{forest}}\\[10pt]
{\small\itshape where class = specialized is served, at every hour (fluid DP, distilled)}\\[3pt]
\spifitbox{\begin{forest}spitree
[{$N_{\mathrm{tot}}\le 2\,255$} [{$\hat r_t\le 34\,219$} [\tleaf{openaiT0}{0}{Sol max}, fill=openaiT0!10, draw=openaiT0!70, line width=0.7pt,edge label={node[midway,left,font=\tiny]{yes}}] [{$N_{\mathrm{spec}}\le 352$} [\tleaf{openaiT1}{1}{Sol xhigh}, fill=openaiT1!10, draw=openaiT1!70, line width=0.7pt,edge label={node[midway,left,font=\tiny]{yes}}] [\tleaf{openaiT0}{0}{Sol max}, fill=openaiT0!10, draw=openaiT0!70, line width=0.7pt,edge label={node[midway,right,font=\tiny]{no}}],edge label={node[midway,right,font=\tiny]{no}}],edge label={node[midway,left,font=\tiny]{yes}}] [{$N_{\mathrm{tot}}\le 2\,760$} [{$N_{\mathrm{tot}}\le 2\,578$} [\tleaf{openaiT0}{0}{Sol max}, fill=openaiT0!10, draw=openaiT0!70, line width=0.7pt,edge label={node[midway,left,font=\tiny]{yes}}] [\tleaf{openaiT1}{1}{Sol xhigh}, fill=openaiT1!10, draw=openaiT1!70, line width=0.7pt,edge label={node[midway,right,font=\tiny]{no}}],edge label={node[midway,left,font=\tiny]{yes}}] [\tleaf{openaiT1}{1}{Sol xhigh}, fill=openaiT1!10, draw=openaiT1!70, line width=0.7pt,edge label={node[midway,right,font=\tiny]{no}}],edge label={node[midway,right,font=\tiny]{no}}]]
\end{forest}}\\[10pt]
{\small\itshape where class = agentic is served, at every hour (fluid DP, distilled)}\\[3pt]
\spifitbox{\begin{forest}spitree
[{$\hat r_t\le 26\,168$} [{$N_{\mathrm{cas}}\le 140$} [{$N_{\mathrm{spec}}\le 379$} [\tleaf{openaiT1}{1}{Sol xhigh}, fill=openaiT1!10, draw=openaiT1!70, line width=0.7pt,edge label={node[midway,left,font=\tiny]{yes}}] [\tleaf{openaiT4}{4}{Sol medium}, fill=openaiT4!10, draw=openaiT4!70, line width=0.7pt,edge label={node[midway,right,font=\tiny]{no}}],edge label={node[midway,left,font=\tiny]{yes}}] [\tleaf{openaiT1}{1}{Sol xhigh}, fill=openaiT1!10, draw=openaiT1!70, line width=0.7pt,edge label={node[midway,right,font=\tiny]{no}}],edge label={node[midway,left,font=\tiny]{yes}}] [{before 17:00?} [{$N_{\mathrm{cas}}\le 456$} [\tleaf{openaiT4}{4}{Sol medium}, fill=openaiT4!10, draw=openaiT4!70, line width=0.7pt,edge label={node[midway,left,font=\tiny]{yes}}] [\tleaf{openaiT1}{1}{Sol xhigh}, fill=openaiT1!10, draw=openaiT1!70, line width=0.7pt,edge label={node[midway,right,font=\tiny]{no}}],edge label={node[midway,left,font=\tiny]{yes}}] [\tleaf{openaiT4}{4}{Sol medium}, fill=openaiT4!10, draw=openaiT4!70, line width=0.7pt,edge label={node[midway,right,font=\tiny]{no}}],edge label={node[midway,right,font=\tiny]{no}}]]
\end{forest}}\\[10pt]
{\small\itshape baselines: one tree for every class}\\[3pt]
\spifitbox{\begin{tabular}{c@{\hspace{2.5em}}c@{\hspace{2.5em}}c}
{\scriptsize\itshape reactive threshold} & {\scriptsize\itshape scheduled (peak windows)} & {\scriptsize\itshape always strongest} \\[3pt]
\begin{forest}spitree [{$N_{\mathrm{tot}}> 0.8C$?} [\tleaf{openaiT0}{0}{Sol max}, fill=openaiT0!10, draw=openaiT0!70, line width=0.7pt,edge label={node[midway,left,font=\tiny]{no}}] [\tleaf{openaiT4}{4}{Sol medium}, fill=openaiT4!10, draw=openaiT4!70, line width=0.7pt,edge label={node[midway,right,font=\tiny]{yes}}]] \end{forest} &
\begin{forest}spitree [{peak window ($t\in$ 09:00--12:00, 14:00--18:00)?} [\tleaf{openaiT0}{0}{Sol max}, fill=openaiT0!10, draw=openaiT0!70, line width=0.7pt,edge label={node[midway,left,font=\tiny]{no}}] [\tleaf{openaiT1}{1}{Sol xhigh}, fill=openaiT1!10, draw=openaiT1!70, line width=0.7pt,edge label={node[midway,right,font=\tiny]{yes}}]] \end{forest} &
\begin{forest}spitree [\tleaf{openaiT0}{0}{Sol max}, fill=openaiT0!10, draw=openaiT0!70, line width=0.7pt] \end{forest}
\end{tabular}}
\caption{OpenAI: the distilled routing, one tree per class, and the industry baselines in the same form. Baselines: \emph{reactive threshold} degrades everyone above a backlog trigger (release at $0.5\,C$, $C$ the pool capacity); \emph{scheduled} degrades during the published windows; \emph{always strongest} never degrades. Trees may split on other classes' backlogs, since a shared pool reprices capacity everywhere; splits with identical branches are merged. Model names drop the shared prefix ``GPT-5.6''. Certified rollout value gap of the tree to the DP: 0.20\%.}\label{fig:trees-openai}
\end{figure}

%% file: Figures/latex_out/03_trees_google.tex
\providecommand{\tleaf}[3]{\begin{tabular}{@{}c@{}}{\scriptsize\textbf{\textcolor{#1}{#3}}}\\[-2.5pt]{\tiny\textcolor{black!55}{tier #2}}\end{tabular}}
\providecommand{\tswatch}[2]{\textcolor{#1}{\rule[-0.25ex]{0.9em}{1.5ex}}\,{\scriptsize #2}}
\providecommand{\spifitbox}[1]{\sbox0{#1}\ifdim\wd0>\linewidth\resizebox{\linewidth}{!}{\usebox0}\else\usebox0\fi}
\forestset{spitree/.style={for tree={draw=black!30, rounded corners=1.5pt, edge={black!45,thin}, l sep=7pt, s sep=4pt, inner sep=2.5pt, font=\scriptsize, align=center}}}
\definecolor{googleT0}{HTML}{1A3A6B}
\definecolor{googleT1}{HTML}{C98A2B}
\definecolor{googleT3}{HTML}{5B3E8C}
\begin{figure}[!ht]\centering
\textbf{Google}\\[3pt]
{\scriptsize models, strongest $\to$ weakest:\quad \tswatch{googleT0}{3.7 Flash high (tier 0)}\quad \tswatch{googleT1}{3.7 Flash medium (tier 1)}\quad \tswatch{googleT3}{3.7 Flash low (tier 3)}}\\[7pt]
{\small\itshape where class = casual is served, at every hour (fluid DP, distilled)}\\[3pt]
\spifitbox{\begin{forest}spitree
[{$\hat r_t\le 393\,366$} [{$\hat r_t\le 208\,253$} [{$N_{\mathrm{ag}}\le 35$} [\tleaf{googleT1}{1}{3.7 Flash medium}, fill=googleT1!10, draw=googleT1!70, line width=0.7pt,edge label={node[midway,left,font=\tiny]{yes}}] [\tleaf{googleT0}{0}{3.7 Flash high}, fill=googleT0!10, draw=googleT0!70, line width=0.7pt,edge label={node[midway,right,font=\tiny]{no}}],edge label={node[midway,left,font=\tiny]{yes}}] [\tleaf{googleT1}{1}{3.7 Flash medium}, fill=googleT1!10, draw=googleT1!70, line width=0.7pt,edge label={node[midway,right,font=\tiny]{no}}],edge label={node[midway,left,font=\tiny]{yes}}] [\tleaf{googleT3}{3}{3.7 Flash low}, fill=googleT3!10, draw=googleT3!70, line width=0.7pt,edge label={node[midway,right,font=\tiny]{no}}]]
\end{forest}}\\[10pt]
{\small\itshape where class = specialized is served, at every hour (fluid DP, distilled)}\\[3pt]
\spifitbox{\begin{forest}spitree
[{$N_{\mathrm{spec}}\le 59$} [{$N_{\mathrm{tot}}\le 742$} [{$N_{\mathrm{tot}}\le 550$} [\tleaf{googleT0}{0}{3.7 Flash high}, fill=googleT0!10, draw=googleT0!70, line width=0.7pt,edge label={node[midway,left,font=\tiny]{yes}}] [\tleaf{googleT1}{1}{3.7 Flash medium}, fill=googleT1!10, draw=googleT1!70, line width=0.7pt,edge label={node[midway,right,font=\tiny]{no}}],edge label={node[midway,left,font=\tiny]{yes}}] [\tleaf{googleT3}{3}{3.7 Flash low}, fill=googleT3!10, draw=googleT3!70, line width=0.7pt,edge label={node[midway,right,font=\tiny]{no}}],edge label={node[midway,left,font=\tiny]{yes}}] [{$N_{\mathrm{cas}}\le 909$} [\tleaf{googleT0}{0}{3.7 Flash high}, fill=googleT0!10, draw=googleT0!70, line width=0.7pt,edge label={node[midway,left,font=\tiny]{yes}}] [{$N_{\mathrm{spec}}\le 190$} [\tleaf{googleT1}{1}{3.7 Flash medium}, fill=googleT1!10, draw=googleT1!70, line width=0.7pt,edge label={node[midway,left,font=\tiny]{yes}}] [\tleaf{googleT0}{0}{3.7 Flash high}, fill=googleT0!10, draw=googleT0!70, line width=0.7pt,edge label={node[midway,right,font=\tiny]{no}}],edge label={node[midway,right,font=\tiny]{no}}],edge label={node[midway,right,font=\tiny]{no}}]]
\end{forest}}\\[10pt]
{\small\itshape where class = agentic is served, at every hour (fluid DP, distilled)}\\[3pt]
\spifitbox{\begin{forest}spitree
[{before 18:00?} [{peak window ($t\in$ 09:00--12:00, 14:00--18:00)?} [{$N_{\mathrm{cas}}\le 456$} [\tleaf{googleT1}{1}{3.7 Flash medium}, fill=googleT1!10, draw=googleT1!70, line width=0.7pt,edge label={node[midway,left,font=\tiny]{yes}}] [\tleaf{googleT3}{3}{3.7 Flash low}, fill=googleT3!10, draw=googleT3!70, line width=0.7pt,edge label={node[midway,right,font=\tiny]{no}}],edge label={node[midway,left,font=\tiny]{no}}] [{before 17:00?} [\tleaf{googleT0}{0}{3.7 Flash high}, fill=googleT0!10, draw=googleT0!70, line width=0.7pt,edge label={node[midway,left,font=\tiny]{yes}}] [\tleaf{googleT3}{3}{3.7 Flash low}, fill=googleT3!10, draw=googleT3!70, line width=0.7pt,edge label={node[midway,right,font=\tiny]{no}}],edge label={node[midway,right,font=\tiny]{yes}}],edge label={node[midway,left,font=\tiny]{yes}}] [{$N_{\mathrm{ag}}\le 35$} [{$N_{\mathrm{tot}}\le 1\,003$} [\tleaf{googleT1}{1}{3.7 Flash medium}, fill=googleT1!10, draw=googleT1!70, line width=0.7pt,edge label={node[midway,left,font=\tiny]{yes}}] [\tleaf{googleT3}{3}{3.7 Flash low}, fill=googleT3!10, draw=googleT3!70, line width=0.7pt,edge label={node[midway,right,font=\tiny]{no}}],edge label={node[midway,left,font=\tiny]{yes}}] [{$N_{\mathrm{spec}}\le 59$} [\tleaf{googleT1}{1}{3.7 Flash medium}, fill=googleT1!10, draw=googleT1!70, line width=0.7pt,edge label={node[midway,left,font=\tiny]{yes}}] [\tleaf{googleT0}{0}{3.7 Flash high}, fill=googleT0!10, draw=googleT0!70, line width=0.7pt,edge label={node[midway,right,font=\tiny]{no}}],edge label={node[midway,right,font=\tiny]{no}}],edge label={node[midway,right,font=\tiny]{no}}]]
\end{forest}}\\[10pt]
{\small\itshape baselines: one tree for every class}\\[3pt]
\spifitbox{\begin{tabular}{c@{\hspace{2.5em}}c@{\hspace{2.5em}}c}
{\scriptsize\itshape reactive threshold} & {\scriptsize\itshape scheduled (peak windows)} & {\scriptsize\itshape always strongest} \\[3pt]
\begin{forest}spitree [{$N_{\mathrm{tot}}> 1.0C$?} [\tleaf{googleT0}{0}{3.7 Flash high}, fill=googleT0!10, draw=googleT0!70, line width=0.7pt,edge label={node[midway,left,font=\tiny]{no}}] [\tleaf{googleT3}{3}{3.7 Flash low}, fill=googleT3!10, draw=googleT3!70, line width=0.7pt,edge label={node[midway,right,font=\tiny]{yes}}]] \end{forest} &
\begin{forest}spitree [{peak window ($t\in$ 09:00--12:00, 14:00--18:00)?} [\tleaf{googleT0}{0}{3.7 Flash high}, fill=googleT0!10, draw=googleT0!70, line width=0.7pt,edge label={node[midway,left,font=\tiny]{no}}] [\tleaf{googleT1}{1}{3.7 Flash medium}, fill=googleT1!10, draw=googleT1!70, line width=0.7pt,edge label={node[midway,right,font=\tiny]{yes}}]] \end{forest} &
\begin{forest}spitree [\tleaf{googleT0}{0}{3.7 Flash high}, fill=googleT0!10, draw=googleT0!70, line width=0.7pt] \end{forest}
\end{tabular}}
\caption{Google: the distilled routing, one tree per class, and the industry baselines in the same form. Baselines: \emph{reactive threshold} degrades everyone above a backlog trigger (release at $0.5\,C$, $C$ the pool capacity); \emph{scheduled} degrades during the published windows; \emph{always strongest} never degrades. Trees may split on other classes' backlogs, since a shared pool reprices capacity everywhere; splits with identical branches are merged. Model names drop the shared prefix ``Gemini''. Certified rollout value gap of the tree to the DP: -0.43\%.}\label{fig:trees-google}
\end{figure}

%% file: Figures/latex_out/03_trees_deepseek.tex
\providecommand{\tleaf}[3]{\begin{tabular}{@{}c@{}}{\scriptsize\textbf{\textcolor{#1}{#3}}}\\[-2.5pt]{\tiny\textcolor{black!55}{tier #2}}\end{tabular}}
\providecommand{\tswatch}[2]{\textcolor{#1}{\rule[-0.25ex]{0.9em}{1.5ex}}\,{\scriptsize #2}}
\providecommand{\spifitbox}[1]{\sbox0{#1}\ifdim\wd0>\linewidth\resizebox{\linewidth}{!}{\usebox0}\else\usebox0\fi}
\forestset{spitree/.style={for tree={draw=black!30, rounded corners=1.5pt, edge={black!45,thin}, l sep=7pt, s sep=4pt, inner sep=2.5pt, font=\scriptsize, align=center}}}
\definecolor{deepseekT0}{HTML}{1A3A6B}
\definecolor{deepseekT1}{HTML}{2D6B3F}
\begin{figure}[!ht]\centering
\textbf{DeepSeek}\\[3pt]
{\scriptsize models, strongest $\to$ weakest:\quad \tswatch{deepseekT0}{V4-Pro-0813 thinking (tier 0)}\quad \tswatch{deepseekT1}{V4-Flash-0731 thinking (tier 1)}}\\[7pt]
{\small\itshape where class = casual is served, at every hour (fluid DP, distilled)}\\[3pt]
\spifitbox{\begin{forest}spitree
[\tleaf{deepseekT1}{1}{V4-Flash-0731 thinking}, fill=deepseekT1!10, draw=deepseekT1!70, line width=0.7pt]
\end{forest}}\\[10pt]
{\small\itshape where class = specialized is served, at every hour (fluid DP, distilled)}\\[3pt]
\spifitbox{\begin{forest}spitree
[{$N_{\mathrm{ag}}\le 61$} [{$N_{\mathrm{cas}}\le 799$} [\tleaf{deepseekT0}{0}{V4-Pro-0813 thinking}, fill=deepseekT0!10, draw=deepseekT0!70, line width=0.7pt,edge label={node[midway,left,font=\tiny]{yes}}] [{$\hat r_t\le 80\,185$} [\tleaf{deepseekT0}{0}{V4-Pro-0813 thinking}, fill=deepseekT0!10, draw=deepseekT0!70, line width=0.7pt,edge label={node[midway,left,font=\tiny]{yes}}] [\tleaf{deepseekT1}{1}{V4-Flash-0731 thinking}, fill=deepseekT1!10, draw=deepseekT1!70, line width=0.7pt,edge label={node[midway,right,font=\tiny]{no}}],edge label={node[midway,right,font=\tiny]{no}}],edge label={node[midway,left,font=\tiny]{yes}}] [\tleaf{deepseekT0}{0}{V4-Pro-0813 thinking}, fill=deepseekT0!10, draw=deepseekT0!70, line width=0.7pt,edge label={node[midway,right,font=\tiny]{no}}]]
\end{forest}}\\[10pt]
{\small\itshape where class = agentic is served, at every hour (fluid DP, distilled)}\\[3pt]
\spifitbox{\begin{forest}spitree
[{$N_{\mathrm{ag}}\le 61$} [{$N_{\mathrm{cas}}\le 799$} [\tleaf{deepseekT1}{1}{V4-Flash-0731 thinking}, fill=deepseekT1!10, draw=deepseekT1!70, line width=0.7pt,edge label={node[midway,left,font=\tiny]{yes}}] [{$\hat r_t\le 80\,185$} [\tleaf{deepseekT1}{1}{V4-Flash-0731 thinking}, fill=deepseekT1!10, draw=deepseekT1!70, line width=0.7pt,edge label={node[midway,left,font=\tiny]{yes}}] [\tleaf{deepseekT0}{0}{V4-Pro-0813 thinking}, fill=deepseekT0!10, draw=deepseekT0!70, line width=0.7pt,edge label={node[midway,right,font=\tiny]{no}}],edge label={node[midway,right,font=\tiny]{no}}],edge label={node[midway,left,font=\tiny]{yes}}] [\tleaf{deepseekT1}{1}{V4-Flash-0731 thinking}, fill=deepseekT1!10, draw=deepseekT1!70, line width=0.7pt,edge label={node[midway,right,font=\tiny]{no}}]]
\end{forest}}\\[10pt]
{\small\itshape baselines: one tree for every class}\\[3pt]
\spifitbox{\begin{tabular}{c@{\hspace{2.5em}}c@{\hspace{2.5em}}c}
{\scriptsize\itshape reactive threshold} & {\scriptsize\itshape scheduled (peak windows)} & {\scriptsize\itshape always strongest} \\[3pt]
\begin{forest}spitree [{$N_{\mathrm{tot}}> 0.8C$?} [\tleaf{deepseekT0}{0}{V4-Pro-0813 thinking}, fill=deepseekT0!10, draw=deepseekT0!70, line width=0.7pt,edge label={node[midway,left,font=\tiny]{no}}] [\tleaf{deepseekT1}{1}{V4-Flash-0731 thinking}, fill=deepseekT1!10, draw=deepseekT1!70, line width=0.7pt,edge label={node[midway,right,font=\tiny]{yes}}]] \end{forest} &
\begin{forest}spitree [{peak window ($t\in$ 09:00--12:00, 14:00--18:00)?} [\tleaf{deepseekT0}{0}{V4-Pro-0813 thinking}, fill=deepseekT0!10, draw=deepseekT0!70, line width=0.7pt,edge label={node[midway,left,font=\tiny]{no}}] [\tleaf{deepseekT1}{1}{V4-Flash-0731 thinking}, fill=deepseekT1!10, draw=deepseekT1!70, line width=0.7pt,edge label={node[midway,right,font=\tiny]{yes}}]] \end{forest} &
\begin{forest}spitree [\tleaf{deepseekT0}{0}{V4-Pro-0813 thinking}, fill=deepseekT0!10, draw=deepseekT0!70, line width=0.7pt] \end{forest}
\end{tabular}}
\caption{DeepSeek: the distilled routing, one tree per class, and the industry baselines in the same form. Baselines: \emph{reactive threshold} degrades everyone above a backlog trigger (release at $0.5\,C$, $C$ the pool capacity); \emph{scheduled} degrades during the published windows; \emph{always strongest} never degrades. Trees may split on other classes' backlogs, since a shared pool reprices capacity everywhere; splits with identical branches are merged. Certified rollout value gap of the tree to the DP: 0.00\%.}\label{fig:trees-deepseek}
\end{figure}

%% file: Figures/latex_out/03_trees_kimi.tex
\providecommand{\tleaf}[3]{\begin{tabular}{@{}c@{}}{\scriptsize\textbf{\textcolor{#1}{#3}}}\\[-2.5pt]{\tiny\textcolor{black!55}{tier #2}}\end{tabular}}
\providecommand{\tswatch}[2]{\textcolor{#1}{\rule[-0.25ex]{0.9em}{1.5ex}}\,{\scriptsize #2}}
\providecommand{\spifitbox}[1]{\sbox0{#1}\ifdim\wd0>\linewidth\resizebox{\linewidth}{!}{\usebox0}\else\usebox0\fi}
\forestset{spitree/.style={for tree={draw=black!30, rounded corners=1.5pt, edge={black!45,thin}, l sep=7pt, s sep=4pt, inner sep=2.5pt, font=\scriptsize, align=center}}}
\definecolor{kimiT0}{HTML}{1A3A6B}
\definecolor{kimiT1}{HTML}{2D6B3F}
\begin{figure}[!ht]\centering
\textbf{Kimi}\\[3pt]
{\scriptsize models, strongest $\to$ weakest:\quad \tswatch{kimiT0}{K3 max (tier 0)}\quad \tswatch{kimiT1}{K3 low (tier 1)}}\\[7pt]
{\small\itshape where class = casual is served, at every hour (fluid DP, distilled)}\\[3pt]
\spifitbox{\begin{forest}spitree
[\tleaf{kimiT0}{0}{K3 max}, fill=kimiT0!10, draw=kimiT0!70, line width=0.7pt]
\end{forest}}\\[10pt]
{\small\itshape where class = specialized is served, at every hour (fluid DP, distilled)}\\[3pt]
\spifitbox{\begin{forest}spitree
[\tleaf{kimiT0}{0}{K3 max}, fill=kimiT0!10, draw=kimiT0!70, line width=0.7pt]
\end{forest}}\\[10pt]
{\small\itshape where class = agentic is served, at every hour (fluid DP, distilled)}\\[3pt]
\spifitbox{\begin{forest}spitree
[\tleaf{kimiT0}{0}{K3 max}, fill=kimiT0!10, draw=kimiT0!70, line width=0.7pt]
\end{forest}}\\[10pt]
{\small\itshape baselines: one tree for every class}\\[3pt]
\spifitbox{\begin{tabular}{c@{\hspace{2.5em}}c@{\hspace{2.5em}}c}
{\scriptsize\itshape reactive threshold} & {\scriptsize\itshape scheduled (peak windows)} & {\scriptsize\itshape always strongest} \\[3pt]
\begin{forest}spitree [{$N_{\mathrm{tot}}> 1.0C$?} [\tleaf{kimiT0}{0}{K3 max}, fill=kimiT0!10, draw=kimiT0!70, line width=0.7pt,edge label={node[midway,left,font=\tiny]{no}}] [\tleaf{kimiT1}{1}{K3 low}, fill=kimiT1!10, draw=kimiT1!70, line width=0.7pt,edge label={node[midway,right,font=\tiny]{yes}}]] \end{forest} &
\begin{forest}spitree [{peak window ($t\in$ 09:00--12:00, 14:00--18:00)?} [\tleaf{kimiT0}{0}{K3 max}, fill=kimiT0!10, draw=kimiT0!70, line width=0.7pt,edge label={node[midway,left,font=\tiny]{no}}] [\tleaf{kimiT1}{1}{K3 low}, fill=kimiT1!10, draw=kimiT1!70, line width=0.7pt,edge label={node[midway,right,font=\tiny]{yes}}]] \end{forest} &
\begin{forest}spitree [\tleaf{kimiT0}{0}{K3 max}, fill=kimiT0!10, draw=kimiT0!70, line width=0.7pt] \end{forest}
\end{tabular}}
\caption{Kimi: the distilled routing, one tree per class, and the industry baselines in the same form. Baselines: \emph{reactive threshold} degrades everyone above a backlog trigger (release at $0.5\,C$, $C$ the pool capacity); \emph{scheduled} degrades during the published windows; \emph{always strongest} never degrades. Trees may split on other classes' backlogs, since a shared pool reprices capacity everywhere; splits with identical branches are merged. Model names drop the shared prefix ``Kimi''. Certified rollout value gap of the tree to the DP: 0.00\%.}\label{fig:trees-kimi}
\end{figure}

%% file: Figures/latex_out/07_table_qregret.tex
\begin{table}[t]\centering
\caption{Pointwise Q-regret of the fluid policy under the converged Q-function, on mutually feasible state--action pairs (uniform mean and p95, occupancy-weighted mean), with the feasibility-disagreement rate. Tails concentrate off-trajectory in deep-saturation grid states on the shared-pool, high-$\mu$ providers; the deployment-relevant policy-value certificate is in Table~\ref{tab:poc-cert}.}\label{tab:poc-qregret}
\small\begin{tabular}{@{}lrrrr@{}}\toprule
provider & mean & p95 & occ.\ mean & feas.\ disagr. \\\midrule
Anthropic & 1.84\% & 7.71\% & 3.33\% & 4.54\% \\
OpenAI & 1.25\% & 5.02\% & 1.93\% & 0.00\% \\
Google & 8.05\% & 39.53\% & 11.66\% & 1.51\% \\
DeepSeek & 1.49\% & 1.90\% & 1.65\% & 0.00\% \\
Kimi & 0.00\% & 0.00\% & 0.00\% & 0.00\% \\
\bottomrule\end{tabular}
\end{table}